\theoremstyle{plain}
\newtheorem{thm}{Theorem}[section]
\newtheorem{lem}[thm]{Lemma}
\newtheorem{lemma}[thm]{Lemma}
\newtheorem{prop}[thm]{Proposition}
\theoremstyle{definition}
\newtheorem{defn}{Definition}[section]
\theoremstyle{remark}
\newtheorem{remark}{Remark}
\definecolor{myred}{rgb}{0.84,0.07,0.14}
\newcommand{\R}{\mathbb R}
\newcommand{\N}{\mathbb N}
\newcommand{\Oo}{\mathcal O}
\newcommand{\les}{\lesssim}
\newcommand{\Omegabar}{\overline\Omega}
\newcommand{\dOmega}{{\partial\Omega}}
\newcommand{\dK}{{\partial K}}
\newcommand{\capp}{{\rm cap}}
\newcommand{\capm}{{\rm cap}_{m,\,\Omega}}
\newcommand{\Capp}{{\rm Cap}_{m,\,\R^N}^\geq}
\newcommand{\capVm}{{\rm cap}_{V^m\!,\,\Omega}}
\newcommand{\capmND}{{\rm cap}_{m,\vartheta,\,\Omega}}
\newcommand{\intOmega}{\int_\Omega}
\newcommand{\eKe}{\varepsilon^{-1}K_\varepsilon}
\newcommand{\cK}{{\mathcal K}}
\newcommand{\cS}{{\mathcal S}}
\newcommand{\cU}{{\mathcal U}}
\newcommand{\cX}{{\mathcal X}}
\renewcommand{\dim}{{\rm dim}\,}
\DeclareFontFamily{U}{mathb}{\hyphenchar\font45}
\DeclareFontShape{U}{mathb}{m}{n}{ <-6> matha5 <6-7> matha6 <7-8>
	mathb7 <8-9> mathb8 <9-10> mathb9 <10-12> mathb10 <12-> mathb12 }{}
\DeclareSymbolFont{mathb}{U}{mathb}{m}{n}
\DeclareMathAccent{\abxring}{0}{mathb}{"38}
\DeclareFontFamily{U}{mathb}{\hyphenchar\font45}
\DeclareFontShape{U}{mathb}{m}{n}{ <-6> matha5 <6-7> matha6 <7-8>
	mathb7 <8-9> mathb8 <9-10> mathb9 <10-12> mathb10 <12-> mathb12 }{}
\DeclareSymbolFont{mathb}{U}{mathb}{m}{n}
\newcommand{\tW}{\widetilde W}
\newcommand{\tWEe}{\tW^E_\varepsilon}
\DeclareMathOperator{\spann}{span}
\DeclareOldFontCommand{\it}{\normalfont\itshape}{\mathit}
\newcommand{\dist}{\text{\rm dist}}
\newcommand{\supp}{\text{\rm supp}}
\newcommand{\iii}[1]{{\left\vert\kern-0.25ex\left\vert\kern-0.25ex\left\vert #1 \right\vert\kern-0.25ex\right\vert\kern-0.25ex\right\vert}}
\numberwithin{equation}{section}
\begin{document}

\renewcommand{\thefootnote}{\fnsymbol{footnote}} 
\footnotetext{\emph{Keywords:} polyharmonic operators, asymptotics of
  eigenvalues, higher order capacity, blow-up analysis.}     
\renewcommand{\thefootnote}{\fnsymbol{footnote}} 
\footnotetext{\emph{Mathematics Subject Classification 2020:} 31B30, 31C15,
    35P20, 35B44.}     
\renewcommand{\thefootnote}{\arabic{footnote}} 

\title{Perturbed eigenvalues of polyharmonic operators\\ in
domains with small holes
}
\date{October 12, 2022}
\author[a]{Veronica Felli}
\author[b]{Giulio Romani}

\affil[a]{\small Dipartimento di Matematica e Applicazioni\protect\\ Universit\`{a} degli Studi di Milano-Bicocca \protect\\ Via Cozzi 55, 20125 Milano (Italy) \protect\\ \texttt{veronica.felli@unimib.it}\medskip}

\affil[b]{\small Dipartimento di Scienza e Alta Tecnologia \protect\\ Universit\`{a} degli Studi dell'Insubria\protect\\ and\protect\\ RISM-Riemann International School of Mathematics\protect\\ Villa Toeplitz, Via G.B. Vico, 46 - 21100 Varese, Italy \protect\\
\texttt{giulio.romani@uninsubria.it}}

\maketitle

%-----------------------------------------------
%-----------------------------------------------

\abstract{We study singular perturbations of eigenvalues of the
  polyharmonic operator on bounded domains under removal of small
    interior compact sets. We consider both homogeneous Dirichlet and
  Navier conditions on the external boundary, while we impose
  homogeneous Dirichlet conditions on the boundary of the removed
  set. To this aim, we develop a notion of capacity which is suitable
  for our higher-order context, and which permits to obtain a
  description of the asymptotic behaviour of perturbed simple
  eigenvalues in terms of a capacity of the removed set, in
  dependence of the respective normalized eigenfunction. Then, in the
  particular case of a subset which is scaling to a point, we apply a
  blow-up analysis to detect the precise convergence rate, which turns
  out to depend on the order of vanishing of the eigenfunction. In
  this respect, an important role is played by Hardy-Rellich
  inequalities in order to identify the appropriate
    functional space containing  the limiting profile. Remarkably,
    for the biharmonic operator this turns out to be the same,
  regardless of the boundary conditions prescribed on the exterior
  boundary.}

\section{Introduction}

The aim of the present work is to study perturbations of the
eigenvalues of the polyharmonic operator $(-\Delta)^m$, $m\geq2$, when
from a given bounded domain $\Omega\subset\R^N$ an interior compact
set $K$ is removed, thus introducing a singular perturbation. We focus
on the case in which $K$ is small, in the sense that its capacity is
asymptotically near $0$, with respect to a notion of capacity suitably
developed for our higher-order setting. More specifically, for
$m\geq2$ we consider the eigenvalue problems
\begin{equation}\label{eq_unperturbed}
	\begin{cases}
		(-\Delta)^mu=\lambda u&\mbox{in}\;\Omega,\\
		u=\partial_nu=\cdots=\partial_n^{m-1}u=0&\mbox{on}\;\dOmega,
	\end{cases}\;{\mbox{resp.}}\;\begin{cases}
		(-\Delta)^mu=\lambda u&\mbox{in}\;\Omega,\\
		u=\Delta u=\dots=\Delta^{m-1} u=0&\mbox{on}\;\dOmega,
	\end{cases}
\end{equation} 
with Dirichlet and Navier boundary conditions (BCs) respectively, and,
given a compact set $K\subset\subset~\!\Omega$, we are interested in
the corresponding eigenvalue problems in case $K$ is removed from
$\Omega$, that is
\begin{equation}\label{eq}
	\begin{cases}
          (-\Delta)^mu=\lambda u&\quad\mbox{in}\;\Omega\setminus K,\\
          u=\partial_nu=\cdots=\partial_n^{m-1}u=0&\quad\mbox{in}\;\partial(\Omega\!\setminus\!K),
	\end{cases}
\end{equation}
in the Dirichlet case, 
and 
\begin{equation}\label{eq_Ke_Nav}
	\begin{cases}
		(-\Delta)^mu=\lambda u&\quad\mbox{in}\;\Omega\setminus K,\\
		u=\Delta u=\dots=\Delta^{m-1} u=0&\quad\mbox{on}\;\dOmega,\\
		u=\partial_nu=\cdots=\partial_n^{m-1}u=0&\quad\mbox{on}\;\partial K,
	\end{cases}
\end{equation}
where, instead, Navier BCs on $\dOmega$ are considered. Note that in
both cases we always deal with Dirichlet BCs on $\dK$.  The goal is to
investigate spectral stability and sharp asymptotic estimates for the
eigenvalues of problems \eqref{eq} and \eqref{eq_Ke_Nav} when $K$
vanishes in a capacitary sense.  \vskip0.2truecm Qualitative
properties of solutions to higher-order problem are deeply related to
the boundary conditions that one prescribes. The most common ones in
the literature are Dirichlet BCs
\begin{equation}\label{DIR_BCs}
	u=\partial_nu=\cdots=\partial_n^{m-1}u=0\qquad\mbox{on}\;\,\dOmega,
\end{equation}
and Navier BCs
\begin{equation}\label{NAV_BCs}
	u=\Delta u=\cdots=\Delta^{m-1}u=0\qquad\mbox{on}\;\,\dOmega.
\end{equation}
Indeed, from the point of view of the applications, they correspond to
the simplest Kirchhoff-Love models of a thin plate, either clamped or
hinged at the boundary, respectively in the Dirichlet and the Navier
case.

While the existence and regularity theory for linear problems is
essentially the same in both cases (see
  e.g. \cite{GGS}), however solutions have relevant differences, even
when $\Omega$ is a smooth domain. The most striking and famous one is
regarding positivity. In the Navier case the solution inherits its
sign from the data, since one can decouple the problem into a system
of second-order equations, for which a maximum principle
holds. Instead, positivity preserving is in general lost in the
Dirichlet case, even for smooth and convex domains, except for
peculiar situations in which one can rely on a global analysis of the
Green function, such as for the case of the ball and its smooth
deformations, see \cite{GGS,GR}. On the other hand, functions which
undergo Dirichlet BCs can be trivially extended by $0$ outside the
domain, so that the extension continues to belong to the same
higher-order Sobolev space, while this is not true anymore for
solutions of Navier problems because of possible jumps on $\dOmega$ of
the normal derivative. Motivated by these arguments, we investigate
perturbations of the eigenvalues of $(-\Delta)^m$ in both context of
Dirichlet and Navier BCs on $\dOmega$. Since we rely more on extension
properties rather than positivity issues,
our
analysis will be harder in the Navier~case.
\vskip0.2truecm

In the second-order case (i.e. $m=1$), spectral stability under
removal of small (condenser) capacity sets is proved in
\cite{Courtois} in a very general context, see also
\cite{Bertrand-Colbois} and \cite{flucher}. More specifically, in
\cite{Courtois} it is shown there that the function
$\lambda(\Omega\setminus K)-\lambda(\Omega)$ is differentiable with
respect to the capacity of the removed set $K$ relative to $\Omega$. A
sharp quantification of the vanishing order of the variation of simple
eigenvalues is given in \cite{AFHL}, when concentrating families of
compact sets are considered: the precise rate of convergence is
asymptotic to the $u$-capacity associated to the limit eigenfunction
$u$ (see \cite[Definition 2.1]{Bertrand-Colbois} and
\cite[(14)]{Courtois} for the notion of $u$-capacity) and sharp
asymptotic estimates are given in terms of the diameter of the removed
set if the limit set is a point in $\R^2$, and either if the
eigenfunction does not vanish there, or in case of specific
concentrating sets such as disks or segments.  Asymptotic estimates of
$u$-capacities and eigenvalues of the Dirichlet Laplacian, on bounded
planar domains with small holes of the more general form
$\varepsilon\omega$ with $\varepsilon\to0$, are given in \cite{ABNLM}.
In both \cite{AFHL} and \cite{ABNLM}, a tool that helps to provide
precise asymptotic estimates in dimension two is given by elliptic
coordinates, which allow rewriting the equations satisfied by the
capacitary potentials in a rather explicit way and which however do
not have a simple analogue in higher dimensions.  In the complementary
case $N\geq3$, an approach based on a blow-up argument is used in
\cite{FNO} to derive sharp asymptotic estimates of the $u$-capacity,
and consequently of the eigenvalue variation, for general families of
sets which may also concentrate at the boundary. This method has been
applied also to fractional problems in \cite{AFN}.

Higher-order problems $m\geq2$ are studied in \cite{CN,KLW,LWK}, where
asymptotic expansions of eigenvalues of biharmonic operators are
obtained under removal of a family of sets which are uniformly
vanishing to a point $\{x_0\}$. All these papers deal with the
two-dimensional case and only Dirichlet boundary conditions, both on
$\dOmega$ and on $\dK$, are considered. The main difference with the
corresponding two-dimensional second-order problem, is that the
limiting problem involves the punctured domain
$\Omega\setminus\{x_0\}$. In \cite{CN} formal recursive asymptotic
expansions are found in the nondegenerate case, namely when the
gradient of the corresponding eigenfunction does not vanish at $x_0$,
as well as in the degenerate case. In the former case, these
expansions are justified in a suitable functional setting which makes
use of weighted Sobolev spaces, named after Kondrat'ev, in order to
deal with the point constraint. On the other hand, motivated by the
study of MEMS-devices, in \cite{KLW}, the asymptotic behaviour of
eigenpairs is formally obtained, in both nondegenerate and degenerate
cases, using the method of matching asymptotic expansions. A more
delicate situation is taken into account in \cite{LWK}, when both the
removed subdomain is vanishing, as well as the biharmonic part of the
operator, provided a second-order term is introduced in the
equation. In all these works, the asymptotic expansions of the
perturbed eigenvalues are of logarithmic kind, fact that recalls the
expansion in the two-dimensional case for the Laplace operator given
in \cite[Theorem 1.7]{AFHL}. We note however that, unlike what happens
for the second order problem, capacities cannot play there the role of
perturbation parameters, since in dimension $2$ the higher order
capacity of a point (defined as in \eqref{capVm}) is different from
zero; this is also the reason why the limiting problem is formulated
in the punctured domain.  We mention that the spectral behavior of
higher order elliptic operators upon domain perturbation is
investigated also in \cite{AL} for Dirichlet, Neumann and intermediate
boundary conditions.

The first aim of
the present paper is a rigorous description of the asymptotic
behaviour of the perturbed eigenvalues for polyharmonic operators
$(-\Delta)^m$  for any $m\geq2$ and for a large class of removed sets,
in the spirit of \cite{AFHL,FNO,AFN}. Since we deal with sets of
vanishing capacities, we are focused on the high dimensional
case $N\geq2m$. Furthermore, as second important objective,
we 
investigate whether and how different boundary conditions on $\dOmega$
affect the analysis. As already remarked, in the present work
we consider Dirichlet boundary conditions on $\dK$. In order to have a complete
picture of the influence of the boundary conditions, the complementary
situation of Navier BCs on $\dK$ should be addressed. However, the
techniques developed in the present work strongly rely
on extension properties which are characteristic of the Dirichlet
case, so that a different approach should be devised to treat
the Navier case on $\dK$. We plan to address this in a future work.
\vskip0.2truecm

In order to give the precise statements of the main results, we first
describe the functional setting and the notation we are going to use
throughout the paper.

\paragraph{Notation} We denote the normal derivative of the function
$u$ by $\partial_nu$. For a set $D\subset\R^N$, $\cU(D)$ denotes some
open neighbourhood of $D$, $C^\infty_0(D)$ is the space of the
infinitely differentiable functions which are compactly supported in
$D$, and $L^p(D)$ with $p\in[1,+\infty]$ is the space of
$p$-integrable functions. The norm of $L^p(D)$ is denoted simply by
$\|\cdot\|_p$ whenever the domain is clear from the context.  For
every $m\in \N$ and $u:D\to\R$ with $D\subset\R^N$, we denote as
$D^mu$ the tensor of $m$-th order derivatives of $u$ and define
$|D^mu|^2=\sum_{|\alpha|=m}|D^\alpha u|^2$, where $|\alpha|$ is the
length of the multi-index $\alpha$.

The symbol $\les$ is
used when an inequality is true up to an omitted structural constant,
and we write $f=\Oo(g)$ (resp. $f={\scriptstyle \mathcal{O}}(g))$ as
$x\to x_0$ when there exists a constant $C>0$ such that
$|f(x)|\leq C|g(x)|$ in a neighbourhood of $x_0$
(resp. $\frac{f(x)}{g(x)}\to0$ as $x\to x_0$).

\subsection{The functional setting}\label{Sec_spaces}
Let $\Omega$ be a bounded smooth domain in $\R^N$. In order to treat
at once different boundary conditions on $\dOmega$, i.e. the settings
of problems \eqref{eq} and \eqref{eq_Ke_Nav}, we introduce the
following notation. For $m\geq2$ the set
$V^m(\Omega)\subset H^m(\Omega)$ is defined either as
\begin{equation*}
	V^m(\Omega):=H^m_0(\Omega)
      \end{equation*}
      in case Dirichlet boundary conditions \eqref{DIR_BCs} are
      prescribed on $\dOmega$, where $H^m_0(\Omega)$ is the closure in
      $H^m(\Omega)$ of $C^\infty_0(\Omega)$, or by
\begin{equation*}
	V^m(\Omega):=H^m_\vartheta(\Omega)
\end{equation*}
if Navier boundary conditions \eqref{NAV_BCs} are assumed on
$\dOmega$. Here $H^m_\vartheta(\Omega)$ is the closure in
$H^m(\Omega)$ of the space
\begin{equation*}
  C^m_\vartheta(\Omegabar):=\left\{u\in C^m(\Omegabar)\ \big|\
    \Delta^ju|_{\dOmega}
    =0\,\mbox{for all}\,0\leq j<\tfrac m2\right\}
\end{equation*}
and it can be characterized as
\begin{equation*}
  H^m_\vartheta(\Omega)=\left\{u\in H^m(\Omega)\ \big|\
    \Delta^ju|_{\dOmega}=0\
    \mbox{in the sense of traces}\ \mbox{for all}\ 0\leq j<\tfrac m2\right\}.
\end{equation*}
Note that for $m=2$ we have
$H^2_\vartheta(\Omega)=H^2(\Omega)\cap H^1_0(\Omega)$. In both cases
$V^m(\Omega)$ is a closed subspace of $H^m(\Omega)$. Moreover, for a
bounded domain $\Omega\subset\R^N$, the norms
$$\|\cdot\|_{H^m(\Omega)}:=\sum_{|\alpha|\leq m}\|D^\alpha\cdot\|_{L^2(\Omega)}$$
(with the multi-index notation) and
$$\|\nabla^m\cdot\|_{L^2(\Omega)},\qquad\mbox{where}\quad\nabla^mf:=\begin{cases}
	\Delta^{\frac m2}f&\quad\mbox{for}\;m\;\mbox{even},\\
	\nabla\Delta^{\frac {m-1}2}f&\quad\mbox{for}\;m\;\mbox{odd},
\end{cases}$$
are equivalent on both $H^m_0(\Omega)$ and $H^m_\vartheta(\Omega)$,
see e.g. \cite[Theorem 2.2]{GGS} for the Dirichlet case and
\cite{GGS_art} for the Navier case. In particular,
  there exists a positive constant $C=C(N,m,\Omega)>0$, depending only
  on $N$, $m$, and $\Omega$, such that
  \begin{equation}\label{eq:equiv-norme}
    \|u\|_{H^m(\Omega)}\leq
    C\|\nabla^m u\|_{L^2(\Omega)}\quad\text{for all }u\in V^m(\Omega).
  \end{equation}
Note also that in the Dirichlet case all boundary conditions are
stable, and therefore they are all included in the definition of the
space $H^m_0(\Omega)$; on the other hand, only the first half of the
Navier conditions are stable, while the boundary conditions
$\Delta^ju|_{\dOmega}=0$ for $\tfrac m2\leq j\leq m-1$ are natural and
thus do not appear in the definition of $H^m_\vartheta(\Omega)$. For a
comprehensive discussion, see \cite[Sec.2.4]{GGS}.
\vskip0.2truecm
The next spaces are relevant when a ``hole'' is produced in the
domain. For a compact set $K\subset\Omega$, we define
$$V^m_0(\Omega\setminus K):=\begin{cases}
	H^m_0(\Omega\setminus K)&\quad\mbox{in the Dirichlet case},\\
	H^m_{\vartheta,0}(\Omega\setminus K)&\quad\mbox{in the Navier case}.
\end{cases}$$
Here $H^m_{\vartheta,0}(\Omega\setminus K)$ denotes the space suitable
for Navier BCs on $\dOmega$ and Dirichlet BCs on $\dK$. More
precisely,
$H^m_{\vartheta,0}(\Omega\setminus K)$ is the closure 
in   $H^m_{\vartheta}(\Omega)$  of
\begin{equation*}
  C^m_{\vartheta,0}(\Omegabar\setminus K):=\left\{u\in C^m_\vartheta(\Omegabar)\ \big|\ \supp\, u\cap\cU(K)=\emptyset\ \mbox{for some}\ \cU(K)\right\}.
\end{equation*}
In case $\partial K$ is smooth, $u\in
H^m_{\vartheta,0}(\Omega\setminus K)$ if and only if $u\in
H^m(\Omega\setminus K)$ and 
\begin{equation*}
	\Delta^ju|_{\dOmega}=0\ \,\mbox{for all}\ \,0\leq j<\tfrac m2\quad\,\mbox{and}\quad\,\partial_n^hu|_{\dK}=0\ \,\mbox{for all}\ \,0\leq h\leq m-1
\end{equation*}
in the sense of $L^2$-traces. Note that we have the following chain of inclusions
\begin{equation}\label{inclusion}
	H^m_0(\Omega\setminus K)\subsetneq H^m_{\vartheta,0}(\Omega\setminus K)\subsetneq H^m_\vartheta(\Omega)\subsetneq H^m(\Omega),
\end{equation}
where the second inclusion holds by extending to $0$ in $K$ functions
defined in $\Omega\setminus K$, thanks to the Dirichlet conditions
imposed on $\dK$. For the same reason, note also that, for any compact
sets $K_1,K_2$ such that 
$K_1\subset K_2\subset\Omega$, one has
\begin{equation*}
	V^m(\Omega\setminus K_2)\subset V^m(\Omega\setminus K_1).
\end{equation*}

\vskip0.2truecm

All such spaces are Hilbert spaces with scalar product\footnote{We
  always omit to indicate the scalar product in $\R^N$ with $\cdot$.}
$q_m(u,v):=\intOmega\nabla^mu\,\nabla^mv$. Note that, unlike the
general case, $q_m(\cdot,\cdot)$ does not involve boundary integrals,
see \cite[Sec.2.4]{GGS}. By standard arguments \cite[Theorem
2.15]{GGS}, the linear problem $(-\Delta)^mu=f$ in $\Omega\setminus K$,
with $f\in L^2(\Omega\setminus K)$ and boundary conditions either
  \eqref{DIR_BCs} or \eqref{NAV_BCs}, admits a unique weak solution
$u\in V^m_0(\Omega\setminus K)$, in the sense that
\begin{equation*}
	\int_\Omega\nabla^mu\,\nabla^m\varphi=\intOmega
        f\varphi\qquad\mbox{for all}\ \
        \varphi\in V^m_0(\Omega\setminus K).
\end{equation*}
Analogously, we define the eigenvalues of problems \eqref{eq} and
\eqref{eq_Ke_Nav} in the weak sense. We say that $(\lambda,u)$ is an
eigenpair of \eqref{eq} (resp. \eqref{eq_Ke_Nav}) if
$(\lambda,u)\in\R\times V^m_0(\Omega\setminus K)$ satisfies
\begin{equation}\label{weak_sol_eigfct}
  u\not\equiv0\quad\text{and}\quad
  \int_\Omega\nabla^mu\,\nabla^m\varphi=
  \lambda\intOmega u\varphi\qquad\mbox{for all}\;\varphi\in V^m_0(\Omega\setminus K).
\end{equation}
By classical spectral theory, problems \eqref{eq} and
\eqref{eq_Ke_Nav} admit a diverging sequence of positive eigenvalues
$$0<\lambda_1(\Omega\setminus K)\leq\cdots\leq\lambda_j(\Omega\setminus K)\leq\cdots\to+\infty,$$
where each one is repeated as many times as its multiplicity. Of
course the same holds for the unperturbed problems
\eqref{eq_unperturbed}, whose eigenvalues are denoted as
$\left(\lambda_j(\Omega)\right)_{j\in\N}$. We recall that the
eigenvalues may be variationally characterized as
\begin{equation}\label{var_char_eigv}
  \lambda_j(\Omega\setminus K)=\min_{\substack{\cX_j\subset
      V^m_0(\Omega\setminus K) \\
      \dim\cX_j=j}}\,\max_{v\in\cX_j}
  \frac{\int_{\Omega\setminus K}|\nabla^mv|^2}{\int_{\Omega\setminus K}|v|^2}\,.
\end{equation}
Finally, for $\Omega$ and $K$ as before, we define
	\begin{equation}\label{Xm}
		X^m(\Omega):=\begin{cases}
			C^\infty_0(\Omega)&\quad\mbox{in the Dirichlet case},\\
			C^m_\vartheta(\Omegabar)&\quad\mbox{in the Navier case}
		\end{cases}
\end{equation}
and
\begin{equation*}
	X^m_0(\Omega\setminus K):=\begin{cases}
		C^\infty_0(\Omega\setminus K)&\quad\mbox{in the Dirichlet case},\\
		C^m_{\vartheta,0}(\Omegabar\setminus K)&\quad\mbox{in the Navier case},
	\end{cases}
\end{equation*}
for the sake of a compact notation in some of the proofs.

\subsection{Main results}

In the spirit of the previously cited works \cite{AFHL,AFN,FNO},
asymptotic expansions of eigenvalues under removal of small sets can
be established treating as a perturbation parameter a suitable notion
of capacity. Extending to the higher-order Sobolev framework  the classical definition in the second-order
case, for  every compact set $K\subset\Omega$ we define the
(condenser) $V^m$-\textit{capacity} of $K$ in $\Omega$ as
\begin{equation}\label{capVm}
	\capVm(K):=\inf\left\{\int_\Omega|\nabla^mf|^2\,\Big|\,f\in V^m(\Omega),\, f-\eta_K\in V^m_0(\Omega\setminus K)\right\},
\end{equation}
where $\eta_K$ is a fixed smooth function such that
$\supp\,\eta_K\subset\Omega$ and $\eta_K\equiv1$ in a neighbourhood of
$K$. The $V^m$-capacity of a set $K$ gives an indication about its
relevance for the higher-order Sobolev space $V^m$, in the sense that
zero $V^m$-capacity sets do not affect the space $V^m(\Omega)$ when
they are removed from $\Omega$, and hence nor the spectrum of the
polyharmonic operator (Proposition \ref{Capacity_zero}).
\vskip0.2truecm

In our analysis, a notion of ``weighted'' capacity, which represents
the higher order analogue of the $u$-capacity introduced in
\cite[Definition 2.1]{Bertrand-Colbois} and \cite[(14)]{Courtois} for
second order problems, will be significant too. Given a function
$u\in V^m(\Omega)$, we define the $(u,V^m)$-\textit{capacity} of $K$
in $\Omega$ as
\begin{equation}\label{capVmu}
  \capVm(K,u):=\inf\left\{\int_\Omega|\nabla^mf|^2\,\Big|\,f\in
    V^m(\Omega),\,
    f-u\in V^m_0(\Omega\setminus K)\right\}.
\end{equation}
Note that $u$ is relevant only in a neighbourhood of $K$. Hence,
$\capVm(K,u)=\capVm(K,\eta_Ku)$ for any cut-off function $\eta_K$ as
before. This permits to extend the notion of $(u,V^m)$-capacity to
functions $u\in H^m_{loc}(\R^N)$.

\vskip0.2truecm For those cases in which we need to distinguish the
capacities according to the boundary conditions on $\dOmega$, we use
the following notation:
\begin{equation*}
\capm(K):=\capp_{H^m_0,\Omega}(K)\qquad\mbox{and}\qquad\capmND(K):=\capp_{H^m_\vartheta,\Omega}(K),
\end{equation*}
for the Dirichlet and Navier BCs on $\dOmega$, respectively. Similarly we denote
\begin{equation}\label{eq:notcapm}
		\capm(K,u):=\capp_{H^m_0,\Omega}(K,u)\qquad\mbox{and}\qquad\capmND(K,u):=\capp_{H^m_\vartheta,\Omega}(K,u).
\end{equation}
We point out that the $V^m$-capacity as well as the $(u,V^m)$-capacity
of a compact set $K$ are attained by a unique minimizer, which is
called capacitary potential, and which we denote by $W_K$ and
$W_{K,u}$ respectively. The proof of the attainment of both
capacities, together with some basic properties which will be used
throughout the paper, is presented in Section \ref{Section_cap}.

Our first result is about the stability of the spectrum of
$(-\Delta)^m$, once a set of small $V^m$-capacity is removed.
\begin{thm}\label{Alternative_conv_eigv}
	Let $N\geq2m$ and $\Omega\subset\R^N$ be a smooth bounded domain. Suppose one of the following:
	\begin{enumerate}
		\item[(D)] $V^m(\Omega)=H^m_0(\Omega)$ and
                  $K\subset\Omega$ is compact;
		\item[(N)] $V^m(\Omega)=H^m_\vartheta(\Omega)$ and the
                  exists $K_0\subset\Omega$ compact such that $K$ is
                  compact and $K\subset K_0$.
	\end{enumerate}
	Denote by $\lambda_j(\Omega)$ and $\lambda_j(\Omega\setminus
        K)$,
        $j\in\N\setminus\{0\}$, the eigenvalues respectively for
        \eqref{eq_unperturbed} and \eqref{weak_sol_eigfct}. For all
        $j\in\N\setminus\{0\}$, there exist $\delta>0$ and $C>0$ (which depends on
        $K_0$ in the Navier case (N)) such that, if
        $\capVm(K)<\delta$, one has
	\begin{equation*}
		|\lambda_j(\Omega\setminus K)-\lambda_j(\Omega)|\leq C\left(\capVm(K)\right)^{1/2}.
	\end{equation*}
	In particular $\lambda_j(\Omega\setminus K)\to\lambda_j(\Omega)$ as $\capVm(K)\to0$.
\end{thm}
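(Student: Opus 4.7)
The plan is to split the desired estimate $0\leq\lambda_j(\Omega\setminus K)-\lambda_j(\Omega)\leq C\sqrt{\capVm(K)}$ into its two halves. For the lower bound I would use the chain \eqref{inclusion}: extending by zero on $K$ realizes the continuous inclusion $V^m_0(\Omega\setminus K)\subset V^m(\Omega)$, so every admissible $j$-dimensional subspace in the min-max characterization \eqref{var_char_eigv} on $\Omega\setminus K$ is also admissible on $\Omega$, which gives $\lambda_j(\Omega)\leq\lambda_j(\Omega\setminus K)$. The non-trivial content is the opposite inequality; for this I would test the min-max on $\Omega\setminus K$ with a perturbation of the first $j$ $L^2$-orthonormal eigenfunctions $u_1,\ldots,u_j$ of \eqref{eq_unperturbed}, associated with $\lambda_1(\Omega),\ldots,\lambda_j(\Omega)$. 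Classical elliptic regularity on the smooth domain $\Omega$ gives $u_i\in C^\infty(\overline\Omega)$ with $\|u_i\|_{C^m(\overline\Omega)}$ uniformly bounded. Setting $\tilde u_i:=u_i-W_{K,u_i}\in V^m_0(\Omega\setminus K)$, where $W_{K,u_i}$ is the $(u_i,V^m)$-capacitary potential from \eqref{capVmu}, I would use $\mathcal X_j:=\spann\{\tilde u_1,\ldots,\tilde u_j\}$ as trial subspace.

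The quantitative heart of the argument is the capacity comparison
\[
\capVm(K,u_i)\leq C_i\,\capVm(K),\qquad i=1,\ldots,j,
\]
which, combined with \eqref{eq:equiv-norme} applied to $W_{K,u_i}$, yields $\|W_{K,u_i}\|_{H^m(\Omega)}^2\leq C\,\capVm(K)$ and in particular $\|u_i-\tilde u_i\|_{L^2(\Omega)}^2\leq C\,\capVm(K)$. I would establish this by constructing an explicit competitor for $W_{K,u_i}$ from the $V^m$-capacitary potential $W_K$. In the Dirichlet case the natural candidate $f=u_iW_K$ belongs to $H^m_0(\Omega)$ since $u_i\in C^\infty(\overline\Omega)$ has vanishing normal derivatives up to order $m-1$ on $\partial\Omega$, and $f-u_i=u_i(W_K-1)\in V^m_0(\Omega\setminus K)$ because $W_K\equiv 1$ near $K$ in the capacitary sense; Leibniz together with \eqref{eq:equiv-norme} then give $\int|\nabla^m(u_iW_K)|^2\leq C\|u_i\|_{C^m(\overline\Omega)}^2\,\capVm(K)$. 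In the Navier case the product $u_iW_K$ generally violates the higher Navier conditions $\Delta^\ell u|_{\partial\Omega}=0$ for $1\leq\ell<m/2$, so I would insert a fixed cutoff $\chi\in C^\infty_0(\Omega)$ with $\chi\equiv 1$ on a neighbourhood of $K_0$ and take $f=\chi u_iW_K$: then $\supp f\subset\subset\Omega$, hence $f\in H^m_0(\Omega)\subset H^m_\vartheta(\Omega)$, while $\chi W_K\equiv 1$ near $K\subset K_0$ still forces $f-u_i\in V^m_0(\Omega\setminus K)$.

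Granted the capacity comparison, for any $v=\sum_{i=1}^jc_i\tilde u_i\in\mathcal X_j$ I would write $v=u-W$ with $u:=\sum c_iu_i$ and $W:=\sum c_iW_{K,u_i}$, and expand the Rayleigh quotient using $\int\nabla^mu_i\,\nabla^mu_k=\lambda_i(\Omega)\delta_{ik}$, the $L^2$-orthonormality of $\{u_i\}$, and Cauchy--Schwarz on the cross terms, to arrive at
\[
\frac{\int_{\Omega\setminus K}|\nabla^m v|^2}{\int_{\Omega\setminus K}|v|^2}\leq\frac{\lambda_j(\Omega)+C\sqrt{\capVm(K)}}{1-C\sqrt{\capVm(K)}}\leq\lambda_j(\Omega)+C'\sqrt{\capVm(K)}
\]
for $\capVm(K)$ smaller than some $\delta>0$. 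This simultaneously yields linear independence of $\{\tilde u_1,\ldots,\tilde u_j\}$ (so $\dim\mathcal X_j=j$) and the upper bound $\lambda_j(\Omega\setminus K)\leq\lambda_j(\Omega)+C'\sqrt{\capVm(K)}$ via \eqref{var_char_eigv}. The hardest step will be the capacity comparison in the Navier case: preserving $H^m_\vartheta$ under multiplication forces the localization by a cutoff chosen once and for all on $K_0$, and this is precisely why in case (N) the constant $C$ is allowed to depend on $K_0$ rather than only on the abstract data.
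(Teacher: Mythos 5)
Your proposal is correct and follows the same overall strategy as the paper: both bound $\lambda_j(\Omega\setminus K)$ from above by testing the min--max characterization \eqref{var_char_eigv} on a $j$-dimensional subspace built by perturbing the first $j$ eigenfunctions, with the perturbation controlled in $H^m(\Omega)$ by $\sqrt{\capVm(K)}$ via \eqref{eq:equiv-norme}, and both handle the Navier case by inserting a fixed cutoff supported near $K_0$ so that the perturbation lands in $H^m_0(\Omega)\subset H^m_\vartheta(\Omega)$. The difference is in the shape of the trial functions: the paper uses $\Phi_\ell=u_\ell(1-HW_K)$, i.e.\ a \emph{multiplicative} correction built from the $V^m$-capacitary potential $W_K$ of the fixed cutoff $\eta_K$, and then works with approximating sequences $W_n\to W_K$ that are $\equiv 1$ near $K$; you instead use $\tilde u_i=u_i-W_{K,u_i}$, the \emph{additive} correction by the optimal $(u_i,V^m)$-capacitary potential, and prove separately the comparison $\capVm(K,u_i)\leq C_i\,\capVm(K)$ by exhibiting $u_iW_K$ (resp.\ $\chi u_iW_K$) as a competitor. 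Your route is a little more modular and in fact slightly cleaner in the numerator: since $h\mapsto W_{K,h}$ is linear, $W:=\sum c_iW_{K,u_i}$ is itself the capacitary potential of $u:=\sum c_iu_i$ and hence is $q_m$-orthogonal to $v=u-W\in V^m_0(\Omega\setminus K)$, so $\int_{\Omega\setminus K}|\nabla^m v|^2=\|\nabla^m u\|^2-\|\nabla^m W\|^2\leq\sum_ic_i^2\lambda_i(\Omega)$ with no cross terms to estimate (only the denominator requires a Cauchy--Schwarz). You also make explicit the elementary lower bound $\lambda_j(\Omega)\leq\lambda_j(\Omega\setminus K)$ from the inclusion $V^m_0(\Omega\setminus K)\subset V^m(\Omega)$, which the paper leaves implicit. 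One small point of rigor to keep in mind when you write this out: the statement ``$W_K\equiv 1$ near $K$'' holds only in the approximate sense $W_K-\eta_K\in V^m_0(\Omega\setminus K)$, so checking $f-u_i\in V^m_0(\Omega\setminus K)$ still requires splitting as $u_i(\chi W_K-\eta_K)+u_i(\eta_K-1)$ (with $\eta_K$ supported where $\chi\equiv1$) and using that the second summand, being an $H^m_0$ (resp.\ $H^m_\vartheta$) function vanishing on a neighbourhood of $K$, lies in $V^m_0(\Omega\setminus K)$.
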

The proof of Theorem \ref{Alternative_conv_eigv} is based on the variational characterization of the eigenvalues \eqref{var_char_eigv} and it is detailed in Section \ref{Section_spectral_stability}. We remark that spectral stability in a more general higher-order context was also established in \cite{AL} with a different approach. Here we propose a self-contained and simple proof for our Dirichlet and Navier-Dirichlet settings.

Aiming now at a more precise estimate of the convergence rate, we introduce the following notion of convergence of sets.

\begin{defn}\label{def_conv}
	Let $\{K_\varepsilon\}_{\varepsilon>0}$ be a family of compact sets contained in $\Omega$. We say that $K_\varepsilon$ \textit{is concentrating to a compact set} $K\subset\Omega$ as $\varepsilon \to0$ if, for every open set $U\subseteq\Omega$ such that $U\supset K$, there exists $\varepsilon_U>0$ such that $U\supset K_\varepsilon$ for every $\varepsilon\in(0,\varepsilon_U)$.
\end{defn}
An example is given by a decreasing family of compact sets, see e.g. \cite[Example 3.7]{FNO}. Note that this property alone is not sufficient to have the standard (i.e. metric) convergence of sets. For instance, the uniqueness of the limit set is not assured (e.g. if $K_\varepsilon$ is concentrating to $K$ then $K_\varepsilon$ is concentrating also to $\widetilde K$ for any compact set $\widetilde K$ which contains $K$). However, as for second-order problems, in the case of a $0$-capacity limit set, this concept of convergence of sets is enough to prove the continuity of the capacity (Proposition \ref{cap_pot_stability}) and the Mosco convergence \cite{Da,Mo} of the respective $V^m$-spaces (Proposition \ref{Mosco}). These will be the tools needed for a sharp asymptotic expansion of a perturbed simple eigenvalue $\lambda_J(\Omega\setminus K_\varepsilon)$ in terms of the $(u_J,V^m)$-capacity of the vanishing compact sets $K_\varepsilon$, where $u_J$ is a normalized eigenfunction relative to $\lambda_J(\Omega)$.
\begin{thm}\label{Prop_eigv_cap}
Let $N\geq2m$ and $\Omega\subset\R^N$ be a smooth bounded domain.  Let $\lambda_J(\Omega)$ be a simple eigenvalue of \eqref{eq_unperturbed} and $u_J\in V^m(\Omega)$ be a corresponding eigenfunction normalized in $L^2(\Omega)$. Let $\{K_\varepsilon\}_{\varepsilon>0}$ be a family of compact sets concentrating, as $\varepsilon \to0$, to a compact set $K$ with $\capVm(K)=0$. Then, as $\varepsilon \to0$,
	\begin{equation}\label{formula_asympt}
		\lambda_J(\Omega\setminus K_\varepsilon)=\lambda_J(\Omega)+\capVm(K_\varepsilon,u_J)+{\scriptstyle \mathcal{O}}(\capVm(K_\varepsilon,u_J)).
	\end{equation}
\end{thm}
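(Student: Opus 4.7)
The plan is to combine a variational test-function argument with a sharp $L^2$-decay estimate for the capacitary potential. Throughout, write $W_\varepsilon:=W_{K_\varepsilon,u_J}$, $c_\varepsilon:=\capVm(K_\varepsilon,u_J)$, and let $u_\varepsilon$ denote a sequence of $L^2(\Omega)$-normalised eigenfunctions of \eqref{weak_sol_eigfct} at $\lambda_\varepsilon:=\lambda_J(\Omega\setminus K_\varepsilon)$, extended by zero to $\Omega$. By Theorem \ref{Alternative_conv_eigv}, Proposition \ref{cap_pot_stability}, the Mosco convergence of Proposition \ref{Mosco}, and the simplicity of $\lambda_J(\Omega)$, after a sign choice one has $u_\varepsilon\to u_J$ strongly in $V^m(\Omega)$ and $c_\varepsilon\to0$.

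The cornerstone will be the sharpened $L^2$-estimate $\|W_\varepsilon\|_{L^2(\Omega)}^2=o(c_\varepsilon)$, which I would establish by contradiction and blow-up. If it failed, the rescalings $z_\varepsilon:=W_\varepsilon/\sqrt{c_\varepsilon}$ would satisfy $\|\nabla^m z_\varepsilon\|_{L^2(\Omega)}=1$ while $\|z_\varepsilon\|_{L^2(\Omega)}$ stayed bounded below on a subsequence; weak $V^m$-compactness and Rellich would then yield a nontrivial limit $z\in V^m(\Omega)$. For any $\varphi\in X^m_0(\Omega\setminus K)$, Definition \ref{def_conv} gives $\varphi\in V^m_0(\Omega\setminus K_\varepsilon)$ for small enough $\varepsilon$, so the Euler--Lagrange relation $\int_\Omega\nabla^m W_\varepsilon\,\nabla^m\varphi=0$ rescales to $\int_\Omega\nabla^m z_\varepsilon\,\nabla^m\varphi=0$ and passes to $\int_\Omega\nabla^m z\,\nabla^m\varphi=0$ in the weak limit. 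Since $\capVm(K)=0$, Proposition \ref{Capacity_zero} implies $V^m_0(\Omega\setminus K)=V^m(\Omega)$, so by density this identity extends to every $\varphi\in V^m(\Omega)$; taking $\varphi=z$ and invoking \eqref{eq:equiv-norme} forces $z=0$, contradicting $\|z\|_{L^2}>0$.

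The asymptotic expansion then follows from the identity obtained by using $u_J-W_\varepsilon\in V^m_0(\Omega\setminus K_\varepsilon)$ as a test function in the weak form of the perturbed eigenvalue equation: the unperturbed equation for $u_J$ (tested with $u_\varepsilon\in V^m(\Omega)$) handles the $\nabla^m u_J$-term, while the Euler--Lagrange identity for $W_\varepsilon$ tested against $u_\varepsilon\in V^m_0(\Omega\setminus K_\varepsilon)$ kills the $\nabla^m W_\varepsilon$-term, yielding
\begin{equation*}
(\lambda_\varepsilon-\lambda_J(\Omega))\int_\Omega u_\varepsilon u_J=\lambda_\varepsilon\int_\Omega u_\varepsilon W_\varepsilon.
\end{equation*}
Splitting $\int_\Omega u_\varepsilon W_\varepsilon=\int_\Omega u_J W_\varepsilon+\int_\Omega(u_\varepsilon-u_J)W_\varepsilon$, the first summand equals $c_\varepsilon/\lambda_J(\Omega)$ (test the unperturbed equation with $W_\varepsilon\in V^m(\Omega)$ and use $\int\nabla^m u_J\,\nabla^m W_\varepsilon=c_\varepsilon$ from Euler--Lagrange), while the second is $o(c_\varepsilon)$ by Cauchy--Schwarz together with the lemma and the rate $\|u_\varepsilon-u_J\|_{L^2}=O(\sqrt{c_\varepsilon})$, which I would derive from the preliminary upper bound $\lambda_\varepsilon-\lambda_J(\Omega)=O(c_\varepsilon)$ obtained via the trial $J$-dimensional subspace $\mathrm{span}\bigl(u_k-W_{K_\varepsilon,u_k}\bigr)_{k=1}^{J}$ in \eqref{var_char_eigv}, again using the lemma on each $W_{K_\varepsilon,u_k}$ and Cauchy--Schwarz on the cross terms $\int\nabla^m W_{K_\varepsilon,u_j}\,\nabla^m W_{K_\varepsilon,u_k}$. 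Since $\int u_\varepsilon u_J\to1$ and $\lambda_\varepsilon\to\lambda_J(\Omega)$, dividing yields \eqref{formula_asympt}.

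The hardest part will be the $L^2$-smallness lemma: the naive inequality \eqref{eq:equiv-norme} only provides $\|W_\varepsilon\|_{L^2}^2=O(c_\varepsilon)$, and the improvement to $o(c_\varepsilon)$---which is what produces the correct error term in \eqref{formula_asympt}---relies crucially on the removability guaranteed by $\capVm(K)=0$. A secondary subtlety for $J>1$ is that the remainder $r_\varepsilon:=u_\varepsilon-(\int u_\varepsilon u_J)u_J$ is only $L^2$-orthogonal to $u_J$ and not to $u_1,\dots,u_{J-1}$, so obtaining $\|u_\varepsilon-u_J\|_{L^2}=O(\sqrt{c_\varepsilon})$ requires exploiting the isolation of the simple eigenvalue $\lambda_J(\Omega)$ in the spectrum rather than a single Poincar\'e-type inequality.
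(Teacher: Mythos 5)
Your overall architecture matches the paper's: the key technical input is the sharpened decay estimate on the capacitary potential (the paper's Lemma~\ref{Lemma_A1_adapt}, which even gives the stronger $\|W_\varepsilon\|_{H^{m-1}}^2={\scriptstyle\mathcal O}(c_\varepsilon)$; your contradiction/blow-up argument for $\|W_\varepsilon\|_{L^2}^2={\scriptstyle\mathcal O}(c_\varepsilon)$ is the same idea with a slightly different normalisation), and the identity $(\lambda_\varepsilon-\lambda_J)\int u_\varepsilon u_J=\lambda_\varepsilon\int u_\varepsilon W_\varepsilon$ together with $\int u_J W_\varepsilon=c_\varepsilon/\lambda_J$ is equivalent to what the paper derives around \eqref{Prop_eigv_cap_eq}--\eqref{lambdaJ-lambdaeps}.

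The gap is in your proposed route to $\|u_\varepsilon-u_J\|_{L^2}=O(\sqrt{c_\varepsilon})$. Plugging the trial space $\mathrm{span}\{u_k-W_{K_\varepsilon,u_k}\}_{k=1}^J$ into \eqref{var_char_eigv} and applying Cauchy--Schwarz to the cross terms does \emph{not} give $\lambda_\varepsilon-\lambda_J=O(c_\varepsilon)$: the off-diagonal entries of the Gram matrices are of order $\sqrt{\capVm(K_\varepsilon,u_j)\capVm(K_\varepsilon,u_k)}$, and when $u_J$ vanishes at the concentration point while some $u_k$, $k<J$, does not, $\capVm(K_\varepsilon,u_k)$ is of strictly lower order than $c_\varepsilon$, so these cross terms swamp $c_\varepsilon$. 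Extracting $\lambda_\varepsilon\le\lambda_J+c_\varepsilon(1+{\scriptstyle\mathcal O}(1))$ from this trial space would require a delicate matrix perturbation argument tracking the block structure of the perturbation, not a single Cauchy--Schwarz. Moreover, even granting the preliminary bound, passing from $|\lambda_\varepsilon-\lambda_J|=O(c_\varepsilon)$ to the eigenfunction estimate is itself nontrivial: expanding $u_\varepsilon=\sum_k\beta_k u_k$ and using the relation $\beta_k=\lambda_\varepsilon\int u_\varepsilon W_{K_\varepsilon,u_k}/(\lambda_\varepsilon-\lambda_k)$ only gives $|\beta_k|\lesssim\|W_{K_\varepsilon,u_k}\|_2$, and the infinite sum $\sum_{k\neq J}\|W_{K_\varepsilon,u_k}\|_2^2$ is not controlled by $c_\varepsilon$. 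The paper sidesteps both difficulties by working with the approximate eigenfunction $\psi_\varepsilon:=u_J-W_\varepsilon\in V^m_0(\Omega\setminus K_\varepsilon)$, which satisfies $((-\Delta)^m-\lambda_J)\psi_\varepsilon=\lambda_J W_\varepsilon$ with $\|\lambda_J W_\varepsilon\|_2={\scriptstyle\mathcal O}(\sqrt{c_\varepsilon})$: this yields $|\lambda_\varepsilon-\lambda_J|={\scriptstyle\mathcal O}(\sqrt{c_\varepsilon})$ directly by the spectral distance bound, and then the restriction of $(-\Delta)^m_\varepsilon$ to $\ker\Pi_\varepsilon$ (with $\Pi_\varepsilon$ the spectral projection onto the simple eigenvalue $\lambda_\varepsilon$) has a uniform gap from $\lambda_\varepsilon$, giving $\|\psi_\varepsilon-\Pi_\varepsilon\psi_\varepsilon\|_2={\scriptstyle\mathcal O}(\sqrt{c_\varepsilon})$ and hence the eigenfunction rate, without ever decomposing $u_\varepsilon$ in the unperturbed basis or appealing to a multi-eigenfunction trial space. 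You should replace your preliminary variational step with this projector argument.
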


Theorem \ref{Prop_eigv_cap} is the higher-order counterpart of
\cite[Theorem 1.4]{AFHL} and its proof is presented in Section
\ref{Section_asympt}. In the expansion \eqref{formula_asympt}, the
asymptotic parameter is the $(u_J,V^m)$-capacity of the vanishing
set. The next aim is to quantify $\capVm(K_\varepsilon,u_J)$ as a
function of the diameter of $K_\varepsilon$. In this respect, we focus
on the particular case in which the limit set $K$ is a point
$x_0\in\Omega$  (which in dimension $N\geq2m$ has zero $V^m$-capacity,
see Proposition \ref{Capacity_point}); without loss of generality, we
consider $x_0=0$. We deal with a uniformly
shrinking family of compact sets $K_\varepsilon$, the model case being
$K_\varepsilon=\varepsilon\cK\ni0$ for some fixed compact set
$\cK\subset\R^N$. In this case, assuming $0$ to be an interior point
of $\Omega$, and having the operator $(-\Delta)^m-\lambda$ constant
coefficients, the eigenfunction $u_J$ is analytic at $0$, see
\cite{John}, and hence it does not have infinite order of vanishing there. Therefore, 
there exist $\gamma\in\N$ and a $\gamma$-homogeneous polyharmonic
polynomial $U_0\in H^m_{loc}(\R^N)$ such that
\begin{equation}\label{claim_U}
	U_\varepsilon:=\frac{u_J(\varepsilon\,\cdot)}{\varepsilon^\gamma}\to U_0\qquad\mbox{in}\;H^m(B_R(0))
\end{equation}
for all $R>0$ as $\varepsilon\to 0$. This fact follows from a general result about elliptic equations by Bers \cite[Sec.4 Theorem 1]{Bers}, see also \cite[Theorem 2.1]{Chen}, provided - as in our case - one discards the possibility of an infinite order of vanishing.

In light of \eqref{claim_U}, our strategy to find an asymptotic
expansion of $\capVm(K_\varepsilon,u_J)$ is based on a blow-up
argument: we rescale the boundary value problem defining the
capacitary potential $W_{K_\varepsilon,u_J}$, find a limit equation on
$\R^N\setminus\cK$, and prove the convergence of the family of
rescaled capacitary potentials to the one for the limiting problem. To
this aim, a suitable notion of capacity in $\R^N$, involving
homogeneous higher-order Sobolev spaces $D^{m,2}_0(\R^N)$ and denoted
by $\capp_{m,\R^N}$, will be needed, see Section \ref{Sec_Cap_RN}.
The asymptotic expansion of $\capVm(K_\varepsilon,u_J)$ obtained by
these arguments turns out to depend on the order of vanishing of $u_J$
at the point $0$. More precisely, we have the following results, which
we state below for the model case $K_\varepsilon=\varepsilon\cK$ and
prove in more generality in Section \ref{Sec_blowup}. For the
Dirichlet case we have the following:
\begin{thm}[Dirichlet case]\label{Thm_blowup_easycase}
  Let $N>2m$ and $\Omega\subset\R^N$ be a bounded smooth domain with
  $0\in\Omega$. Let $\cK\subset\R^N$ be a fixed compact set and, for
  all $\varepsilon>0$, $K_\varepsilon=\varepsilon\cK$. Let $\lambda_J$
  be an eigenvalue of \eqref{eq_unperturbed} with Dirichlet boundary
  conditions and $u_J\in H^m_0(\Omega)$ be a corresponding
  eigenfunction normalized in $L^2(\Omega)$. Then
	\begin{equation}\label{Thm_blowup_asympt_easycase}
          {\rm cap}_{m\!,\,\Omega}(K_\varepsilon,u_J)
          =\varepsilon^{N-2m+2\gamma}\left(\capp_{m,\R^N}(\cK,U_0)+{\scriptstyle \mathcal{O}}(1)\right)
	\end{equation}
	as $\varepsilon\to0$, with $\gamma$ and $U_0$ as in \eqref{claim_U}.
\end{thm}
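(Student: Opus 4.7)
The plan is a blow-up argument at the point $0$. Set $\Omega_\varepsilon:=\varepsilon^{-1}\Omega$ and introduce the rescaled capacitary potential
\begin{equation*}
  \tW_\varepsilon(x):=\varepsilon^{-\gamma}W_{K_\varepsilon,u_J}(\varepsilon x),\qquad x\in\Omega_\varepsilon,
\end{equation*}
extended by $0$ outside $\Omega_\varepsilon$; this extension is admissible in the Dirichlet setting, as recalled in \eqref{inclusion}. A direct change of variables gives
\begin{equation*}
  \capm(K_\varepsilon,u_J)=\int_\Omega|\nabla^mW_{K_\varepsilon,u_J}|^2
  =\varepsilon^{N-2m+2\gamma}\int_{\R^N}|\nabla^m\tW_\varepsilon|^2,
\end{equation*}
which already yields the correct scaling factor. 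Moreover $\tW_\varepsilon$ is characterized variationally as the minimizer of $\int|\nabla^m f|^2$ among $f\in H^m_0(\Omega_\varepsilon)$ with $f-U_\varepsilon\in H^m_0(\Omega_\varepsilon\setminus\cK)$, where $U_\varepsilon$ is the rescaled eigenfunction from \eqref{claim_U}. The theorem then reduces to proving that $\int_{\R^N}|\nabla^m\tW_\varepsilon|^2\to\capp_{m,\R^N}(\cK,U_0)$.

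For the upper bound, let $\Phi_0\in D^{m,2}_0(\R^N)$ be the capacitary potential realizing $\capp_{m,\R^N}(\cK,U_0)$, and fix a smooth cut-off $\chi_R$ supported in $B_R$ and equal to $1$ near $\cK$ for $R$ large. The competitor $V_{\varepsilon,R}:=\chi_R\Phi_0+\chi_R(U_\varepsilon-U_0)$ satisfies $V_{\varepsilon,R}-U_\varepsilon\in H^m_0(\Omega_\varepsilon\setminus\cK)$ (since $\Phi_0-U_0\in D^{m,2}_0(\R^N\setminus\cK)$ and $\chi_R$ has compact support inside $\Omega_\varepsilon$ for $\varepsilon$ small) and hence is admissible. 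Using the $H^m(B_R)$-convergence $U_\varepsilon\to U_0$ from \eqref{claim_U}, the $\chi_R(U_\varepsilon-U_0)$-contribution is $o(1)$ as $\varepsilon\to 0$, so
\begin{equation*}
  \limsup_{\varepsilon\to 0}\int_{\R^N}|\nabla^m\tW_\varepsilon|^2\leq\int_{\R^N}|\nabla^m(\chi_R\Phi_0)|^2,
\end{equation*}
and letting $R\to\infty$ the right-hand side converges to $\int_{\R^N}|\nabla^m\Phi_0|^2=\capp_{m,\R^N}(\cK,U_0)$, by Hardy--Rellich-type tail estimates available in $D^{m,2}_0(\R^N)$ for $N>2m$.

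For the lower bound, the uniform energy estimate and the Hardy--Rellich/Sobolev embedding for $N>2m$ (which identifies $D^{m,2}_0(\R^N)$ as a Hilbert space continuously embedded into $L^{2N/(N-2m)}(\R^N)$ and into $H^m_{\rm loc}$) yield a subsequence $\tW_\varepsilon\rightharpoonup\tW_0$ weakly in $D^{m,2}_0(\R^N)$ and strongly in $H^m_{\rm loc}$. Combining the local strong convergence of $U_\varepsilon\to U_0$ with the fact that $\tW_\varepsilon-U_\varepsilon$, viewed as an element of $H^m_0(\R^N\setminus\cK)$ via trivial extension, passes to the limit, we obtain $\tW_0-U_0\in D^{m,2}_0(\R^N\setminus\cK)$. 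Thus $\tW_0$ is admissible in the definition of $\capp_{m,\R^N}(\cK,U_0)$, and weak lower semicontinuity gives
\begin{equation*}
  \capp_{m,\R^N}(\cK,U_0)\leq\int_{\R^N}|\nabla^m\tW_0|^2\leq\liminf_{\varepsilon\to 0}\int_{\R^N}|\nabla^m\tW_\varepsilon|^2.
\end{equation*}
Combined with the upper bound, this forces $\tW_0$ to be the minimizer and upgrades the weak convergence to convergence of the $H^m$-norms; uniqueness of the minimizer makes the extraction of the subsequence unnecessary. Multiplying by $\varepsilon^{N-2m+2\gamma}$ yields \eqref{Thm_blowup_asympt_easycase}.

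The main obstacle is the identification step in the third paragraph: one has to track the higher-order Dirichlet condition on $\cK$ through a weak limit in the homogeneous space $D^{m,2}_0(\R^N)$, where functions decay only in an averaged (Sobolev/Hardy--Rellich) sense. The fact that $\cK$ is a \emph{fixed} compact set (as opposed to the vanishing $K_\varepsilon$) is crucial here, since it allows the ``cutoff plus correction'' construction used in the upper bound and ensures $\tW_\varepsilon-U_\varepsilon$ remains locally in $H^m_0(\R^N\setminus\cK)$ uniformly in $\varepsilon$. The restriction $N>2m$ is exactly what makes Hardy--Rellich available and thereby makes $D^{m,2}_0(\R^N)$ the correct functional framework for the limiting capacity.
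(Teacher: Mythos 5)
Your proposal is correct, but takes a genuinely different route from the paper's. The paper (proving the more general version, Theorem~\ref{Thm_blowup}, for families satisfying (M1)--(M2)) proceeds by a single limit-identification argument: after rescaling and extracting a weak limit $\tW$ of $\tW^E_\varepsilon$ in $D^{m,2}_0(\R^N)$, it tests the equation against Mosco-approximating test functions and verifies the boundary constraint, thereby showing that $\tW$ \emph{is} the capacitary potential for $\capp_{m,\R^N}(\cK,U_0)$; then, using the orthogonality built into the variational problem, it writes $\|\nabla^m\tW_\varepsilon\|^2 = \int\nabla^m\tW_\varepsilon\,\nabla^m(\eta U_\varepsilon)$ and obtains norm convergence directly from weak--strong convergence. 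You instead run the standard two-sided $\Gamma$-convergence argument: an explicit competitor $\chi_R\Phi_0+(\text{correction})$ for the $\limsup$, and weak lower semicontinuity plus admissibility of the weak limit for the $\liminf$. Your route exploits the fact that the blown-up set is the \emph{fixed} compactum $\cK$ (which is what makes Theorem~\ref{Thm_blowup_easycase} a special case of the general one), and thereby sidesteps the Mosco-convergence machinery entirely, at the cost of needing the explicit competitor and the $R\to\infty$ cutoff-removal estimate. Both approaches buy the same result; yours is more elementary for the model case, while the paper's covers the general (M1)--(M2) families and avoids cutoff bookkeeping in the upper bound.

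Two small imprecisions worth tightening. First, after extracting a weakly convergent subsequence in $D^{m,2}_0(\R^N)$ you claim strong convergence in $H^m_{\rm loc}$; Rellich only gives strong $H^{m-1}_{\rm loc}$, but this is harmless since your argument only ever uses weak $H^m$ convergence together with lower semicontinuity. Second, the phrase ``$\tW_\varepsilon-U_\varepsilon$ passes to the limit'' is loose: $U_\varepsilon$ does not converge in $D^{m,2}_0(\R^N)$ globally (the limit $U_0$ is a polynomial), so one should instead track $\tW_\varepsilon-\eta U_\varepsilon$ with a fixed cutoff $\eta\equiv1$ near $\cK$. Then $\eta U_\varepsilon\to\eta U_0$ strongly (because $\eta$ has compact support and $U_\varepsilon\to U_0$ in $H^m_{\rm loc}$), so $\tW_\varepsilon-\eta U_\varepsilon\rightharpoonup\tW_0-\eta U_0$ in $D^{m,2}_0(\R^N)$, and weak closedness of the closed subspace $D^{m,2}_0(\R^N\setminus\cK)$ yields the admissibility of $\tW_0$. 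With these small repairs your argument is complete.
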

The dimensional restriction $N>2m$ is mainly due to the possibility of
characterizing higher-order homogeneous Sobolev spaces as concrete
functional spaces satisfying Sobolev and Hardy-type inequalities (see
Sections \ref{sec:homog-sobol-spac} and \ref{Sec_Hardy}). In the
conformal case $N=2m$ such spaces are instead made of classes of
functions defined up to additive polynomials, see
\cite[II.6-7]{Galdi}. In the Navier setting, we need to restrict to
the biharmonic case $m=2$.
\begin{thm}[Navier case]\label{Thm_blowup_easycase_Nav}
  Let $N>4$ and $\Omega\subset\R^N$ be a bounded smooth domain with
  $0\in\Omega$. Let $\cK\subset\R^N$ be a fixed compact set and, for
  all $\varepsilon>0$, $K_\varepsilon=\varepsilon\cK$. Let $\lambda_J$
  be an eigenvalue of \eqref{eq_unperturbed} with Navier boundary
  conditions and $u_J\in H^2_\vartheta(\Omega)$ be a corresponding
  eigenfunction normalized in $L^2(\Omega)$. Then
	\begin{equation*}
		{\rm cap}_{2,\vartheta\!,\,\Omega}(K_\varepsilon,u_J)=\varepsilon^{N-4+2\gamma}\left(\capp_{2,\R^N}(\cK,U_0)+{\scriptstyle \mathcal{O}}(1)\right)
	\end{equation*}
	as $\varepsilon\to0$, with $\gamma$ and $U_0$ as in \eqref{claim_U} with $m=2$.
\end{thm}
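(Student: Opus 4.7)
The strategy is the blow-up argument used in the proof of Theorem \ref{Thm_blowup_easycase}, adapted to the Navier setting. Let $W_\varepsilon:=W_{K_\varepsilon,u_J}\in H^2_\vartheta(\Omega)$ be the $(u_J,H^2_\vartheta)$-capacitary potential, which solves $\Delta^2 W_\varepsilon=0$ in $\Omega\setminus K_\varepsilon$ with $W_\varepsilon-u_J\in H^2_{\vartheta,0}(\Omega\setminus K_\varepsilon)$. Introduce the rescaled functions
\begin{equation*}
\tW_\varepsilon(y):=\varepsilon^{-\gamma}W_\varepsilon(\varepsilon y),\qquad U_\varepsilon(y):=\varepsilon^{-\gamma}u_J(\varepsilon y),\qquad y\in\tfrac{1}{\varepsilon}\Omega,
\end{equation*}
so that the scaling of $\Delta^2$ gives the identity
\begin{equation*}
\capmND(K_\varepsilon,u_J)=\int_\Omega|\Delta W_\varepsilon|^2=\varepsilon^{N-4+2\gamma}\int_{\Omega/\varepsilon}|\Delta\tW_\varepsilon|^2.
\end{equation*}
The claim thus reduces to proving $\int_{\Omega/\varepsilon}|\Delta\tW_\varepsilon|^2\to\capp_{2,\R^N}(\cK,U_0)$.

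For the upper bound, I would build a competitor for the capacity minimization from the $\R^N$-capacitary potential $V_0$ of $\cK$ relative to $U_0$. Writing $V_0=U_0+\psi_0$ with $\psi_0\in D^{2,2}_0(\R^N\setminus\cK)$, fix a cutoff $\eta\in C^\infty_c(\Omega)$ with $\eta\equiv1$ near $0$ and set
\begin{equation*}
f_\varepsilon(x):=u_J(x)+\varepsilon^\gamma\psi_0(x/\varepsilon)\,\eta(x).
\end{equation*}
Two checks are needed: $f_\varepsilon\in H^2_\vartheta(\Omega)$ (granted by $u_J\in H^2_\vartheta(\Omega)$ and the compact support of $\eta$ inside $\Omega$), and $f_\varepsilon-u_J\in H^2_{\vartheta,0}(\Omega\setminus K_\varepsilon)$ (granted by the vanishing of $\psi_0$ with its normal derivative on $\cK$ and a density/approximation argument). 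Testing the capacity with $f_\varepsilon$, rescaling, and using the convergence $U_\varepsilon\to U_0$ in $H^2_{loc}(\R^N)$ from \eqref{claim_U} together with the Hardy-Rellich inequalities (valid for $N>4$) to dominate the lower-order terms produced by $\nabla\eta(\varepsilon\,\cdot)$ and $\Delta\eta(\varepsilon\,\cdot)$, one passes to the limit to obtain
\begin{equation*}
\varepsilon^{-(N-4+2\gamma)}\capmND(K_\varepsilon,u_J)\leq\capp_{2,\R^N}(\cK,U_0)+{\scriptstyle\mathcal{O}}(1).
\end{equation*}

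For the matching lower bound, the uniform energy estimate just established, combined with the Hardy-Rellich inequality, provides a uniform bound on $\tW_\varepsilon$ in a suitable homogeneous Sobolev norm. Up to a subsequence, one thus extracts a weak limit $\tW_0$ with $\tW_\varepsilon\rightharpoonup\tW_0$. The constraint $W_\varepsilon-u_J\in H^2_{\vartheta,0}(\Omega\setminus K_\varepsilon)$ rescales to a matching condition on $\cK$ that is preserved in the limit, yielding $\tW_0-U_0\in D^{2,2}_0(\R^N\setminus\cK)$. Weak lower semi-continuity of the Dirichlet integral together with the minimality of $V_0$ then gives
\begin{equation*}
\liminf_{\varepsilon\to 0}\int_{\Omega/\varepsilon}|\Delta\tW_\varepsilon|^2\geq\int_{\R^N}|\Delta\tW_0|^2\geq\capp_{2,\R^N}(\cK,U_0),
\end{equation*}
and the two bounds together force equality. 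Uniqueness of the capacitary potential upgrades the subsequential convergence to a full limit.

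The hard part lies in the passage to the limit in the Navier framework: unlike the Dirichlet case, functions in $H^2_\vartheta(\Omega)$ cannot be extended by zero across $\dOmega$ preserving the full $H^2$-regularity, because the natural boundary condition $\Delta u=0$ on $\dOmega$ is unstable under such extensions. The restriction to the biharmonic case $m=2$ is essential exactly here: the stable part of the Navier BCs, namely $u=0$ on $\dOmega$, allows extension by zero in $H^1$, and the Hardy-Rellich inequality (available for $N>4$) then supplies the missing $L^2$-control of the lower-order derivatives, identifying $\tW_0$ as an element of $D^{2,2}(\R^N)$ and making the limiting functional framework consistent with the one used for $V_0$. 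For polyharmonic Navier problems with $m\geq 3$ no analogous extension/Hardy-Rellich pair is available, which is why the statement is confined to $m=2$.
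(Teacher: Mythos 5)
Your overall strategy — blow up, extract a weak limit via the Hardy--Rellich compactness, compare upper and lower bounds — is different in flavor from the paper's proof of Theorems \ref{Thm_blowup}--\ref{Thm_blowup_Nav}: the paper never constructs a competitor for an upper bound, but rather identifies the weak limit $\tW$ of the rescaled potentials as the capacitary potential for $\capp_{2,\R^N}(\cK,U_0)$ (checking both the Euler--Lagrange relation and the membership $\tW-\eta U_0\in D^{2,2}_0(\R^N\setminus\cK)$) and then upgrades to norm convergence by a weak--strong pairing, using that $\tW_\varepsilon-\eta U_\varepsilon$ is an admissible test function for the $(-\Delta)^m$-harmonicity of $\tW_\varepsilon$. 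Your lower bound step is essentially the paper's Step~1, and the Hardy--Rellich/extension discussion at the end is the right explanation for the restriction $m=2$ in the Navier case; that part is sound.

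However, your competitor for the upper bound is wrong, and the error is structural, not merely a missing cutoff. First, $\psi_0:=V_0-U_0$ does not lie in $D^{2,2}_0(\R^N\setminus\cK)$: since $V_0\in D^{2,2}_0(\R^N)$ decays at infinity while $U_0$ is a nontrivial $\gamma$-homogeneous polynomial, the difference grows like $|y|^\gamma$; the correct object is $\psi_0:=V_0-\eta_\cK U_0$ for a cutoff $\eta_\cK\equiv 1$ near $\cK$ (which is precisely why the paper defines the $(u,V^m)$-capacity through $\eta_K u$ when $u$ is only in $H^m_{loc}$). Second and more seriously, the competitor $f_\varepsilon=u_J+\varepsilon^\gamma\psi_0(\cdot/\varepsilon)\eta$ keeps the whole of $u_J$: outside $\supp\eta$ one has $\Delta f_\varepsilon=\Delta u_J$, so
\begin{equation*}
\int_\Omega|\Delta f_\varepsilon|^2\geq\int_{\Omega\setminus\supp\eta}|\Delta u_J|^2>0,
\end{equation*}
a constant that does \emph{not} vanish as $\varepsilon\to0$, whereas the capacity you are trying to bound tends to $0$ with rate $\varepsilon^{N-4+2\gamma}$. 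The capacitary potential $W_{K_\varepsilon,u_J}$ is not a small perturbation of $u_J$: it is a small function, concentrated near $K_\varepsilon$, that merely \emph{matches} $u_J$ there. A competitor that actually works must kill the bulk of $u_J$ away from $K_\varepsilon$, e.g. $g_\varepsilon(x):=\eta_\cK(x/\varepsilon)\,u_J(x)+\varepsilon^\gamma\psi_0(x/\varepsilon)\,\eta(x)$, whose rescaling $\eta_\cK U_\varepsilon+\psi_0\,\eta(\varepsilon\cdot)$ converges to $\eta_\cK U_0+\psi_0=V_0$; one then still needs the Hardy--Rellich inequality to control the commutator terms coming from the large cutoff $\eta(\varepsilon\cdot)$. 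As written, your upper bound does not close, and without it the matching of $\liminf$ and $\limsup$ fails. The paper's identification-plus-weak--strong argument sidesteps the competitor construction altogether and is the safer route here.
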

It is remarkable that the same asymptotic expansion
\eqref{Thm_blowup_asympt_easycase} for $m=2$ holds true for both
Dirichlet and Navier BCs on $\dOmega$. As a consequence, imposing
different conditions on the external boundary does not affect the
first term of the asymptotic expansion of the perturbed
eigenvalues. In the proof of Theorems \ref{Thm_blowup_easycase} and
\ref{Thm_blowup_easycase_Nav} we will need to distinguish between the
two settings. If in the case of Dirichlet BCs on $\dOmega$ the natural
candidate as functional space for the limiting problem is
$D^{m,2}_0(\R^N\setminus\cK)$, on the other hand, in the Navier case,
because of the impracticability of the trivial extension of a function
outside $\Omega$, this is not evident and follows after a more
involved analysis which makes use of suitable Hardy-Rellich
inequalities. In Section \ref{Sec_Hardy} we give the precise statement
and proofs of such inequalities. This is the main reason for the
restriction to the case $m=2$, see Section \ref{Sec_blowup}.

Braiding together Theorem \ref{Prop_eigv_cap} and Theorems
\ref{Thm_blowup_easycase}-\ref{Thm_blowup_easycase_Nav}, we obtain the
following sharp asymptotic expansions of
$\lambda_J(\Omega\setminus K_\varepsilon)$, stated here for the model
case $K_\varepsilon=\varepsilon\cK$.
\begin{thm}[Dirichlet case]\label{asymptotic_eigv_easycase}
  Let $N>2m$ and $\Omega\subset\R^N$ be a bounded smooth domain
  containing $0$. Let $\cK\subset\R^N$ be a fixed compact set and, for
  all $\varepsilon>0$, $K_\varepsilon=\varepsilon\cK$. Let $\lambda_J$
  be a simple eigenvalue of \eqref{eq_unperturbed} with Dirichlet
  boundary conditions and let $u_J\in H^m_0(\Omega)$ be a
  corresponding eigenfunction normalized in $L^2(\Omega)$. Then
	\begin{equation*}
          \lambda_J(\Omega\setminus K_\varepsilon)=\lambda_J(\Omega)
          +\varepsilon^{N-2m+2\gamma}\left(\capp_{m,\R^N}(\cK,U_0)+{\scriptstyle \mathcal{O}}(1)\right)
	\end{equation*}
	as $\varepsilon\to0$, with $\gamma$ and $U_0$ as in \eqref{claim_U}.
\end{thm}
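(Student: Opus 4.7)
The plan is to obtain this theorem as a direct consequence of two of the previously established results, namely the general expansion in Theorem~\ref{Prop_eigv_cap} and the blow-up asymptotic for the capacity in Theorem~\ref{Thm_blowup_easycase}, once the hypotheses of both are verified for the model family $K_\varepsilon=\varepsilon\cK$.

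First I would check the applicability of Theorem~\ref{Prop_eigv_cap}. The family $\{K_\varepsilon\}_{\varepsilon>0}$ is concentrating to the singleton $\{0\}$ in the sense of Definition~\ref{def_conv}: given any open $U\subseteq\Omega$ with $0\in U$, we may fix $\rho>0$ with $B_\rho(0)\subset U$ and then choose $\varepsilon_U>0$ so that $\varepsilon\,\cK\subset B_\rho(0)\subset U$ for every $\varepsilon\in(0,\varepsilon_U)$, which is possible because $\cK$ is compact. Since $N\geq 2m$, Proposition~\ref{Capacity_point} (quoted before the statement of Theorem~\ref{Thm_blowup_easycase}) gives $\capm(\{0\})=0$, so the limit compact set has zero $V^m$-capacity. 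Together with the hypothesis that $\lambda_J$ is simple, this allows Theorem~\ref{Prop_eigv_cap} to yield
\begin{equation*}
\lambda_J(\Omega\setminus K_\varepsilon)=\lambda_J(\Omega)+\capm(K_\varepsilon,u_J)+{\scriptstyle \mathcal{O}}\big(\capm(K_\varepsilon,u_J)\big)
\quad\text{as }\varepsilon\to 0.
\end{equation*}

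Next I would plug in the blow-up estimate. Since $\Omega$ is a smooth bounded domain containing $0$ and $u_J\in H^m_0(\Omega)$ is an $L^2$-normalized eigenfunction associated to a Dirichlet eigenvalue, the order of vanishing $\gamma$ and the $\gamma$-homogeneous polyharmonic blow-up profile $U_0$ from \eqref{claim_U} are well defined by the Bers-type result cited in the text. Theorem~\ref{Thm_blowup_easycase} then gives
\begin{equation*}
\capm(K_\varepsilon,u_J)=\varepsilon^{N-2m+2\gamma}\big(\capp_{m,\R^N}(\cK,U_0)+{\scriptstyle \mathcal{O}}(1)\big)
\quad\text{as }\varepsilon\to 0.
\end{equation*}
Substituting this into the previous identity, the remainder term ${\scriptstyle \mathcal{O}}(\capm(K_\varepsilon,u_J))$ becomes ${\scriptstyle \mathcal{O}}(\varepsilon^{N-2m+2\gamma})$, which is absorbed inside $\varepsilon^{N-2m+2\gamma}\,{\scriptstyle \mathcal{O}}(1)$, yielding the claimed expansion.

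Since this is essentially a composition of two theorems proved elsewhere in the paper, I do not expect a genuine obstacle here; the only point requiring mild care is the bookkeeping of the little-$o$ remainders to ensure that they can all be absorbed into the single error factor ${\scriptstyle \mathcal{O}}(1)$ in front of $\varepsilon^{N-2m+2\gamma}$. The substantive difficulties are postponed to the proofs of Theorems~\ref{Prop_eigv_cap} and \ref{Thm_blowup_easycase} themselves, the latter of which relies on the blow-up analysis and the homogeneous Sobolev space framework developed in Sections~\ref{Sec_Cap_RN} and~\ref{Sec_blowup}.
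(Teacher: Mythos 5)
Your proposal is correct and takes exactly the route the paper does: Theorem~\ref{asymptotic_eigv_easycase} is obtained in the paper by ``braiding together'' Theorem~\ref{Prop_eigv_cap} and Theorem~\ref{Thm_blowup_easycase}, together with the observation (Proposition~\ref{Capacity_point}) that $\{0\}$ has zero $V^m$-capacity for $N\geq 2m$, and your verification that $K_\varepsilon=\varepsilon\cK$ concentrates to $\{0\}$ in the sense of Definition~\ref{def_conv} and the subsequent substitution with absorption of the little-$o$ remainder are precisely the details the paper leaves implicit.
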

\begin{thm}[Navier case]\label{asymptotic_eigv_easycase_Nav}
  Let $N>4$ and $\Omega\subset\R^N$ be a bounded smooth domain
  containing $0$. Let $\cK\subset\R^N$ be a fixed compact set and, for
  all $\varepsilon>0$, $K_\varepsilon=\varepsilon\cK$. Let $\lambda_J$
  be a simple eigenvalue of \eqref{eq_unperturbed} with Navier
  boundary conditions and let $u_J\in H^2_\vartheta(\Omega)$ be a
  corresponding eigenfunction normalized in $L^2(\Omega)$. Then
	\begin{equation*}
          \lambda_J(\Omega\setminus K_\varepsilon)
          =\lambda_J(\Omega)+\varepsilon^{N-4+2\gamma}\left(\capp_{2,\R^N}(\cK,U_0)+{\scriptstyle \mathcal{O}}(1)\right)
	\end{equation*}
	as $\varepsilon\to0$, with $\gamma$ and $U_0$ as in \eqref{claim_U} with $m=2$.
\end{thm}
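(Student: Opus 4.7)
The plan is to obtain the asymptotic expansion by concatenating the two principal tools previously established: the general first-order expansion of Theorem \ref{Prop_eigv_cap}, which expresses the variation $\lambda_J(\Omega\setminus K_\varepsilon)-\lambda_J(\Omega)$ in terms of the $(u_J,H^2_\vartheta)$-capacity of the hole, and the blow-up expansion of Theorem \ref{Thm_blowup_easycase_Nav}, which quantifies this capacity as an explicit power of $\varepsilon$. Thus the task reduces to verifying the hypotheses of both theorems and composing the two asymptotic estimates in a quantitatively correct way.

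First I would check that the family $K_\varepsilon=\varepsilon\cK$ satisfies the concentration assumption of Definition \ref{def_conv} with limit set $\{0\}$: for any open neighbourhood $U$ of the origin, compactness of $\cK$ together with $0\in U$ provides $\varepsilon_U>0$ such that $\varepsilon\cK\subset U$ for every $\varepsilon<\varepsilon_U$. Moreover, the dimensional assumption $N>4=2m$ together with Proposition \ref{Capacity_point} gives that the $V^2$-capacity of $\{0\}$ vanishes. Since $\lambda_J$ is assumed simple and $u_J$ is $L^2$-normalized, all the hypotheses of Theorem \ref{Prop_eigv_cap} are met with $m=2$ and $V^2=H^2_\vartheta$, so that
\begin{equation}\label{nav_step1}
  \lambda_J(\Omega\setminus K_\varepsilon)=\lambda_J(\Omega)+{\rm cap}_{2,\vartheta\!,\,\Omega}(K_\varepsilon,u_J)+{\scriptstyle \mathcal{O}}\bigl({\rm cap}_{2,\vartheta\!,\,\Omega}(K_\varepsilon,u_J)\bigr)
\end{equation}
as $\varepsilon\to0$. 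The hypotheses of Theorem \ref{Thm_blowup_easycase_Nav} coincide with those at hand, and accordingly it supplies
\begin{equation}\label{nav_step2}
  {\rm cap}_{2,\vartheta\!,\,\Omega}(K_\varepsilon,u_J)=\varepsilon^{N-4+2\gamma}\bigl(\capp_{2,\R^N}(\cK,U_0)+{\scriptstyle \mathcal{O}}(1)\bigr),
\end{equation}
where $\gamma$ and $U_0$ are those produced by \eqref{claim_U}, which holds precisely because the operator $(-\Delta)^2-\lambda_J$ has constant coefficients and $u_J$ cannot vanish to infinite order at an interior point.

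Substituting \eqref{nav_step2} into \eqref{nav_step1} yields the desired expansion: in the non-degenerate case $\capp_{2,\R^N}(\cK,U_0)>0$, the two ${\scriptstyle\mathcal{O}}$-terms collapse into a single ${\scriptstyle\mathcal{O}}(\varepsilon^{N-4+2\gamma})$ remainder, and factoring out $\varepsilon^{N-4+2\gamma}$ gives precisely the statement; in the degenerate case $\capp_{2,\R^N}(\cK,U_0)=0$ the right-hand side of \eqref{nav_step2} is already ${\scriptstyle\mathcal{O}}(\varepsilon^{N-4+2\gamma})$, while the nested ${\scriptstyle\mathcal{O}}$-term in \eqref{nav_step1} is of even smaller order, and the conclusion is again attained with vanishing ${\scriptstyle\mathcal{O}}(1)$ coefficient. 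All the substantive analysis has already been carried out in the two theorems being combined, namely the Mosco-convergence based derivation of \eqref{formula_asympt} and the blow-up identification of the limit profile in a homogeneous Sobolev space on $\R^N\setminus\cK$, which in the Navier setting crucially exploits the Hardy-Rellich inequalities of Section \ref{Sec_Hardy}. Consequently, no new obstacle arises at this final stage beyond the careful bookkeeping of the composed error terms.
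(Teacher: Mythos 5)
Your proof is correct and follows exactly the paper's strategy: the paper itself states that Theorems \ref{asymptotic_eigv_easycase}--\ref{asymptotic_eigv_easycase_Nav} are obtained by ``braiding together Theorem \ref{Prop_eigv_cap} and Theorems \ref{Thm_blowup_easycase}--\ref{Thm_blowup_easycase_Nav}'', which is precisely your concatenation of \eqref{formula_asympt} with the blow-up expansion of the $(u_J,H^2_\vartheta)$-capacity. Your bookkeeping of the nested ${\scriptstyle\mathcal{O}}$-terms, including the degenerate case $\capp_{2,\R^N}(\cK,U_0)=0$, is sound.
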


Theorems \ref{Thm_blowup_easycase}-\ref{asymptotic_eigv_easycase_Nav}
deal with the model case $K_\varepsilon=\varepsilon\cK$. Section
\ref{Sec_blowup} will be devoted to the proof of their analogues for a
more comprehensive setting of general families of concentrating
compact sets $\{K_\varepsilon\}_{\varepsilon>0}$ which uniformly
shrink to a point, see Theorems
\ref{Thm_blowup}-\ref{asymptotic_exp_eigv_cap_Nav}.

The asymptotic expansion provided by Theorems
\ref{asymptotic_eigv_easycase}-\ref{asymptotic_eigv_easycase_Nav}
detects the sharp vanishing rate of the eigenvalue variation whenever
$\capp_{m,\R^N}(\cK,U_0)\neq0$. In Section \ref{Sec_suffcond} we
establish sufficient conditions for this to hold. In particular, this
will always be the case when the Lebesgue measure of $\cK$ is positive
(Proposition \ref{sharp_asympt}), or when either the eigenfunction
$u_J$ does not vanish at the point $x_0$ (Proposition
\ref{suff_cond_nonvanishing}) or it does vanish but the compactum
$\cK$ and the null-set of the limiting polynomial $U_0$ in
\eqref{claim_U} are ``transversal enough'' (Proposition
\ref{Prop_transv}).

The paper is then concluded by the short Section \ref{Sec_OP} which
contains a discussion about questions which are left open by our
analysis and possible directions in which our results may be extended.

\section{Definition of higher-order capacity with Dirichlet and Navier
  BCs and first properties}\label{Section_cap}
The aim of this section is to introduce a notion of \textit{capacity}
which agrees with the higher-order framework of the problem and which
turns out to be an important tool in order to study the asymptotics of
the eigenvalues of the perturbed problems
\eqref{eq}-\eqref{eq_Ke_Nav}. The concept of (condenser) capacity,
well-known for the second-order case, was first considered in the
higher-order setting by Maz'ya for bounded domains on which Dirichlet
boundary conditions are imposed, or for the whole space,
see\footnote{In these works the higher-order capacity is defined
  through the $L^p$-norm of the tensor of the $m$-th derivatives
  $D^mu$. However, the two norms are equivalent on any bounded smooth
  domain.} e.g. \cite{M1,M}. In Section \ref{Sec_cap_Omega} we propose
an unified treatment for both Dirichlet and Navier settings and
establish the main properties of the capacities defined by
\eqref{capVm}-\eqref{capVmu}. In Section \ref{Sec_Cap_RN} we recall
the main properties of the homogeneous Sobolev spaces and establish a
Hardy-Rellich inequality with intermediate derivatives. Moreover  we
introduce 
the notion of capacity of a compact set in the whole space $\R^N$ for
large dimensions $N>2m$.

\subsection{Higher-order capacities in $\boldsymbol{V^m_0}$}\label{Sec_cap_Omega}

Let $m\in\N\setminus\{0\}$, $\Omega$ be a bounded smooth domain in
$\R^N$ and $K$ be a compact subset of $\Omega$. First, we observe that
both capacities \eqref{capVm}-\eqref{capVmu} are attained. Indeed, for
any $u\in V^m(\Omega)$, we have that
$S_u:=\left\{g\in V^m(\Omega)\,|\,g-u\in V^m_0(\Omega\setminus
  K)\right\}$ is an affine hyperplane in $V^m(\Omega)$, so in
particular a convex set. This implies that there exists a unique
element in $V^m(\Omega)$ which minimizes the distance from the origin,
i.e. the norm $\|\nabla^m \cdot\|_2$ in $S_u$, which is
called \textit{capacitary potential} and is denoted by $W_{K,u}$
(in case $u$ is replaced by $\eta_K$, we simply denote it by
$W_K$). This means that $W_{K,u}$ is such that
\begin{equation*}
	\capVm(K,u)=\int_\Omega|\nabla^mW_{K,u}|^2
\end{equation*}
and it is the unique (weak) solution of the problem
\begin{equation}\label{W_ku_pb}
	\begin{cases}
		(-\Delta)^mW_{K,u}=0\quad\ \mbox{in}\;\Omega\setminus K,\\
		W_{K,u}\in V^m(\Omega),\\
		W_{K,u}-u\in V^m_0(\Omega\setminus K),
	\end{cases}
\end{equation}
in the sense that $W_{K,u}\in V^m(\Omega)$, $W_K-u\in V^m_0(\Omega\setminus K)$ and
\begin{equation*}
	\int_{\Omega\setminus
          K}\nabla^mW_{K,u}\nabla^m\varphi=0\qquad\mbox{for all}\;
        \varphi\in V^m_0(\Omega\setminus K).
\end{equation*}
In \eqref{W_ku_pb} we are in fact prescribing homogeneous Dirichlet or
Navier boundary conditions on $\dOmega$ and, in case $\partial K$ is
smooth, an ``$m$-Dirichlet-matching'' between $W_{K,u}$ and $u$ on
$\dK$, i.e. the $m$ conditions $W_{K,u}=u$,
$\partial_nW_{K,u}=\partial_nu$, $\dots$,
$\partial_n^{m-1}W_{K,u}=\partial_n^{m-1}u$ on $\dK$.

In particular, the minimizer $W_K$ of the $V^m$-capacity is such that
\begin{equation*}
	\capVm(K)=\int_\Omega|\nabla^mW_K|^2
\end{equation*}
and it is the unique (weak) solution of the problem
\begin{equation*}
	\begin{cases}
		(-\Delta)^mW_K=0\quad\mbox{in}\;\Omega\setminus K,\\
		W_K\in V^m(\Omega),\\
		W_K-\eta_K\in V^m_0(\Omega\setminus K).
	\end{cases}
\end{equation*}
in the sense that $W_K\in V^m(\Omega)$, $W_K-\eta_K\in V^m_0(\Omega\setminus K)$ and
\begin{equation}\label{W_k_sol}
  \int_{\Omega\setminus K}\nabla^mW_K\nabla^m\varphi=0
  \qquad\mbox{for all}\;\varphi\in  V^m_0(\Omega\setminus K).
      \end{equation}
We observe that $\capVm(K)=0$ implies that 
      $0\in S_{\eta_K}$, i.e. $\eta_K\in V^m_0(\Omega\setminus
      K)$. Since $\eta_K\equiv1$ on $K$, this can only hold true when
      the Sobolev space ``does not see'' $K$, i.e. when
      $V^m_0(\Omega\setminus K)=V^m(\Omega)$. As a consequence, the
      eigenvalues of problems \eqref{eq} and
        \eqref{eq_Ke_Nav} coincide with those of \eqref{eq_unperturbed}. 
      More precisely, in the spirit of \cite[Propositions 2.1 and
      2.2]{Courtois} (see also \cite[Proposition 3.3]{FNO}), we
      prove the following.
\begin{prop}\label{Capacity_zero}
	The following statements are equivalent:
	\begin{enumerate}[i)]
		\item $\capVm(K)=0$;
		\item $V^m_0(\Omega\setminus K)=V^m(\Omega)$;
		\item $\lambda_n(\Omega\setminus K)=\lambda_n(\Omega)\,$ for all $n\in\N$.
	\end{enumerate}
\end{prop}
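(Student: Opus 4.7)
The plan is to establish i) $\Leftrightarrow$ ii) together with ii) $\Leftrightarrow$ iii). Two of the four implications are essentially direct: ii) $\Rightarrow$ i) follows by taking $f\equiv 0$ in \eqref{capVm}, which is admissible since $-\eta_K\in V^m(\Omega)=V^m_0(\Omega\setminus K)$; and ii) $\Rightarrow$ iii) uses \eqref{var_char_eigv} combined with the observation that any $v\in V^m_0(\Omega\setminus K)$, being a $V^m$-limit of functions $\phi_n$ vanishing on neighborhoods $U_n\supseteq K$, satisfies $\int_K|v|^2=0$ and $\int_K|\nabla^m v|^2\leq\int_{U_n}|\nabla^m(v-\phi_n)|^2\to 0$. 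Hence the Rayleigh quotients coincide over $\Omega$ and $\Omega\setminus K$ on $V^m_0(\Omega\setminus K)$, and the min-max characterizations of $\lambda_j(\Omega)$ and $\lambda_j(\Omega\setminus K)$ become identical.

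For i) $\Rightarrow$ ii), the attainment of the capacity together with the norm equivalence \eqref{eq:equiv-norme} forces the capacitary potential $W_K$ to vanish identically, so $\eta_K=-(W_K-\eta_K)\in V^m_0(\Omega\setminus K)$. Fix an approximating sequence $\phi_n\in X^m_0(\Omega\setminus K)$ with $\phi_n\to\eta_K$ in $V^m(\Omega)$; after multiplying by a cutoff $\chi\in C^\infty_0(\Omega)$ with $\chi\equiv 1$ on $\supp\,\eta_K$, we may assume each $\phi_n$ has compact support in $\Omega$. For any $u\in X^m(\Omega)$ one then splits
\begin{equation*}
	u=u(1-\eta_K)+u\eta_K.
\end{equation*}
The first summand coincides with $u$ in a neighborhood of $\dOmega$ and vanishes near $K$, so it inherits the stable boundary conditions from $u$ and lies in $X^m_0(\Omega\setminus K)$. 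The second is the $V^m$-limit of the products $u\phi_n$, each of which is smooth, compactly supported in $\Omega\setminus K$, and in particular vanishes in a neighborhood of $\dOmega$; thus $u\phi_n\in X^m_0(\Omega\setminus K)$. Density of $X^m(\Omega)$ in $V^m(\Omega)$ and the closedness of $V^m_0(\Omega\setminus K)$ then yield $V^m(\Omega)\subseteq V^m_0(\Omega\setminus K)$, while the reverse inclusion is part of \eqref{inclusion}.

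For iii) $\Rightarrow$ ii), index the distinct eigenvalues as $\mu_1<\mu_2<\dots$ (their multiplicities coincide for the perturbed and unperturbed problems under iii)) and denote the respective eigenspaces by $W_k$ and $W_k^K$. By strong induction on $k$, assuming $W_i=W_i^K$ for $i<k$, any $u\in W_k^K\subseteq V^m(\Omega)$ has Rayleigh quotient $\mu_k$ (via the identity above) and is $L^2$-orthogonal to $W_1\oplus\dots\oplus W_{k-1}$; the min-max characterization of $\lambda_j(\Omega)$ therefore forces $u\in W_k$, so $W_k^K\subseteq W_k$, and equality of dimensions gives $W_k=W_k^K$. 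To conclude, any $v\in V^m(\Omega)$ expands in the orthonormal eigenbasis as $v=\sum_k c_k u_k$ with $\sum_k\lambda_k|c_k|^2<\infty$, so the partial sums lie in $V^m_0(\Omega\setminus K)$ and converge to $v$ in $V^m(\Omega)$; closedness then gives $v\in V^m_0(\Omega\setminus K)$.

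The main technical subtlety occurs in i) $\Rightarrow$ ii): for Navier boundary conditions with $m\geq 4$, the space $C^m_\vartheta(\Omegabar)$ is not closed under pointwise multiplication, so $u\phi_n$ would in general fail to lie in $X^m_0(\Omega\setminus K)$. The cutoff $\chi$ resolves this by forcing each $\phi_n$ (and hence $u\phi_n$) to vanish in a neighborhood of $\dOmega$, making the Navier conditions trivially satisfied there.
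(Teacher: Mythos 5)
Your proof is correct and covers all four implications more carefully than the paper's own brief treatment. A few comparison points follow.

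For (ii)\,$\Rightarrow$\,(i) you take $f\equiv0$ in \eqref{capVm}; the paper instead tests \eqref{W_k_sol} with $\varphi=W_K$ itself to conclude $W_K=0$. Both arguments are one line and equally sound.

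For (i)\,$\Rightarrow$\,(ii) the paper uses a single family $v_i:=u(1-\eta_Kw_i)$ with $w_i$ a minimizing sequence, whereas you first pass to the capacitary potential $W_K=0$, split $u=u(1-\eta_K)+u\eta_K$, and then approximate the second piece by $u\phi_n$ with $\phi_n\in X^m_0(\Omega\setminus K)$. The net computation is the same multiplicative estimate, but your version makes explicit why the approximants lie in $X^m_0(\Omega\setminus K)$: the cutoff $\chi$ forcing compact support handles the lack of closure of $C^m_\vartheta(\Omegabar)$ under products in the Navier case. (This phenomenon already appears at $m\geq3$, not only $m\geq4$: for $m=3$ one still needs $\Delta(u\phi_n)|_{\dOmega}=0$, which fails in general because $\nabla u\cdot\nabla\phi_n$ reduces to a product of normal derivatives on $\dOmega$. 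Since your cutoff works uniformly in $m$, the argument is unaffected.) The paper's choice $v_i=u(1-\eta_Kw_i)$ is also in $V^m_0(\Omega\setminus K)$, but verifying this in the Navier case requires the same kind of bookkeeping you make explicit.

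For (iii)\,$\Rightarrow$\,(ii) the paper only gestures at the spectral theorem. Your induction on distinct eigenvalues $\mu_1<\mu_2<\cdots$, comparing the eigenspaces $W_k$ and $W_k^K$ using $L^2$-orthogonality and the Rayleigh-quotient identity, followed by expansion in the common eigenbasis and closedness of $V^m_0(\Omega\setminus K)$, supplies the missing detail correctly and is a genuine strengthening of the exposition. The observation that the Rayleigh quotients over $\Omega$ and over $\Omega\setminus K$ agree on $V^m_0(\Omega\setminus K)$ is precisely the fact (implicit in \eqref{inclusion}) that the paper uses without comment.
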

\begin{proof}
  To show $(i)\Rightarrow(ii)$, by density of $X^m(\Omega)$ in
  $V^m(\Omega)$, see \eqref{Xm}, it is enough to prove that each
  $u\in X^m(\Omega)$ may be approximated by functions in
  $V^m_0(\Omega\setminus K)$ in the $V^m$-norm. Since $\capVm(K)=0$,
  there exists $(w_i)_i\subset V^m(\Omega)$ with
  $w_i-\eta_K\in V^m_0(\Omega\setminus K)$ so that
  $\|\nabla^mw_i\|_2^2\to0$ as $i\to+\infty$. Hence, defining
  $v_i:=u(1-\eta_Kw_i)$, one has that
  $v_i\in V^m_0(\Omega\setminus K)$ and, in view of
  \eqref{eq:equiv-norme},
	\begin{equation*}
		\begin{split}
                  \|\nabla^m(u-v_i)\|_2^2&=\|\nabla^m(u\eta_Kw_i)\|_2^2
                  \les\sum_{j=0}^m\intOmega|D^{m-j}(\eta_Ku)|^2|D^{j}w_i|^2\\
                  &\leq\|\eta_Ku\|_{W^{m,\infty}(\Omega)}^2\sum_{j=0}^m\intOmega|D^j w_i|^2=\|\eta_Ku\|_{W^{m,\infty}(\Omega)}^2\|w_i\|^2_{H^m(\Omega)}\\
                  &\leq C^2\|\eta_Ku\|_{W^{m,\infty}(\Omega)}^2
                  \|\nabla^m w_i\|_2^2\to 0
		\end{split}
	\end{equation*}
	as $i\to+\infty$.
	
	The reversed implication $(ii)\Rightarrow(i)$ is due to the fact that $\varphi=W_K$ may be used as a test function in \eqref{W_k_sol} to obtain that $\|W_K\|_{V^m_0(\Omega\setminus K)}=\|W_K\|_{V^m(\Omega)}=0$, which is equivalent to $(i)$.
	
	$(ii)\Rightarrow(iii)$ easily follows from the minimax
        characterization of the eigenvalues \eqref{var_char_eigv}. The
        converse is implied by the spectral theorem, because by
        $(iii)$ one is able to find an orthonormal
        basis of $V^m(\Omega)$ made of
        $V^m_0(\Omega\setminus K)$-functions.
\end{proof}
\begin{remark}\label{equiv_cap_0}
From Proposition \ref{Capacity_zero}, in particular
  from the implication $(i)\Rightarrow(ii)$, one derives the following equivalence:
$$\capVm(K)=0\quad\;\Leftrightarrow\quad\;\capVm(K,u)=0\;\,\mbox{for all}\;\, u\in V^m(\Omega).$$
\end{remark}

Next, we investigate some properties of the above defined capacities, in particular the monotonicity properties with respect to $\Omega$ and $K$, and the relation between the Dirichlet and the Navier capacities.
\begin{prop}[Monotonicity properties of the capacity]\label{Capacity_monotonicity}
	The following properties hold.
	\begin{enumerate}[i)]
		\item If $K_1\subset K_2\subset\Omega$,
                  $K_1,K_2$ are compact, and $h\in
                  V^m(\Omega)$, then
                  \begin{equation*}
                    \capVm(K_1,h)\leq\capVm(K_2,h).
                  \end{equation*}
		\item If $K\subset\Omega_1\subset\Omega_2$, $K$ is
                  compact,  and $h\in H^m(\Omega_2)$, then $\capp_{m,\Omega_2}(K,h)\leq\capp_{m,\Omega_1}(K,h)$.
		\item For every $K\subset\Omega$ compact and $h\in
                  H^m(\Omega)$,
 there holds $\capmND(K,h)\leq\capm(K,h)$.
	\end{enumerate}
\end{prop}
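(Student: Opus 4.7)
The plan is to reduce each of the three inequalities to a set-inclusion among the admissible competitor classes appearing in the variational definition \eqref{capVmu}, so that the infimum on the left is taken over a set at least as large as the one on the right. No hard analysis is needed: the proof should rely only on the Sobolev-space inclusions already collected in Section \ref{Sec_spaces}, together with the basic identity $\capVm(K,h)=\capVm(K,\eta_K h)$ pointed out after \eqref{capVmu}.

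For (i), I would invoke the inclusion $V^m_0(\Omega\setminus K_2)\subset V^m_0(\Omega\setminus K_1)$ recorded just after \eqref{inclusion}, which follows from extending by zero across $K_2\setminus K_1$ (permissible thanks to the Dirichlet conditions on $\partial K_2$). Any $f\in V^m(\Omega)$ with $f-h\in V^m_0(\Omega\setminus K_2)$ is then automatically admissible for $\capVm(K_1,h)$; passing to the infimum yields the inequality. Property (iii) is analogous: the chain \eqref{inclusion} gives both $H^m_0(\Omega)\subset H^m_\vartheta(\Omega)$ and $H^m_0(\Omega\setminus K)\subset H^m_{\vartheta,0}(\Omega\setminus K)$, so every Dirichlet competitor for $\capm(K,h)$ is simultaneously a Navier competitor for $\capmND(K,h)$, and the Navier infimum, being taken over a larger class, can only be smaller.

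The slightly more delicate point is (ii), stated in the Dirichlet setting $\capp_m=\capp_{H^m_0,\,\Omega}$. Here I would first fix a cut-off $\eta_K\in C^\infty_c(\Omega_1)$ equal to $1$ near $K$ and use $\capVm(K,h)=\capVm(K,\eta_K h)$ to replace $h$ by $\eta_K h$, which has compact support in $\Omega_1$ and therefore belongs to $H^m_0(\Omega_1)\subset H^m_0(\Omega_2)$. Then, for any competitor $f\in H^m_0(\Omega_1)$ of $\capp_{m,\Omega_1}(K,\eta_K h)$, its zero-extension $\tilde f$ lies in $H^m_0(\Omega_2)$; since $\eta_K h$ already vanishes outside $\Omega_1$, the function $\tilde f-\eta_K h$ coincides on $\Omega_2$ with the zero-extension of $f-\eta_K h$, hence belongs to $H^m_0(\Omega_2\setminus K)$. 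As $\|\nabla^m\tilde f\|_{L^2(\Omega_2)}=\|\nabla^m f\|_{L^2(\Omega_1)}$, passing to the infimum gives the claim. No substantial obstacle is anticipated; the only point requiring genuine attention is the compact-support reduction in (ii), since without it a naive zero-extension would not produce a valid competitor in the enclosing domain.
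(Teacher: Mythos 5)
Your proofs of (i) and (iii) coincide with the paper's: both rely on the inclusions of the $V^m_0$ spaces recorded in and after \eqref{inclusion}, which enlarge (or preserve) the class of competitors. For (ii), however, you add a step that the paper silently omits. The paper argues simply that the zero-extension of a competitor $f\in H^m_0(\Omega_1)$ lies in $H^m_0(\Omega_2)$ and concludes; but for the extended function to remain a competitor one also needs $\tilde f-h\in H^m_0(\Omega_2\setminus K)$, and for an arbitrary $h\in H^m(\Omega_2)$ (which need not vanish near $\partial\Omega_2$, nor be supported in $\Omega_1$) the trivial extension of $f-h$ from $\Omega_1$ does not agree with $\tilde f-h$ on $\Omega_2\setminus\Omega_1$. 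Your preliminary replacement $h\rightsquigarrow\eta_K h$ with $\eta_K\in C^\infty_c(\Omega_1)$, justified by the identity $\capVm(K,h)=\capVm(K,\eta_K h)$ stated after \eqref{capVmu}, removes this obstruction: $\eta_K h$ is compactly supported in $\Omega_1$, so $\tilde f-\eta_K h$ really is the zero-extension of $f-\eta_K h$ and hence belongs to $H^m_0(\Omega_2\setminus K)$. Your version is therefore the more careful one; the paper's proof is correct in spirit but relies implicitly on the same cut-off reduction without saying so.
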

\begin{proof}
	\textit{i}) It is enough to notice that, for $u\in V^m(\Omega)$, the condition $u-h\in V^m_0(\Omega\setminus K_2)$ is more restrictive than $u-h\in V^m_0(\Omega\setminus K_1)$.
	
	\textit{ii}) Any $u\in H^m_0(\Omega_1)$ can be extended by $0$
        to a function in $H^m_0(\Omega_2)$, so the minimization for
        $\capp_{m,\Omega_2}(K,h)$ takes into consideration a larger
        set of test functions than the one for $\capp_{m,\Omega_1}(K,h)$,
        and consequently the $\inf$ decreases.
	
	\textit{iii}) It follows directly from the inclusions in \eqref{inclusion}.
\end{proof}

\begin{remark}
	Note that the argument used in the proof of
        (\textit{ii}) for  Dirichlet BCs is no more available in the
        case of Navier BCs on $\dOmega$.
\end{remark}

As an example, which is also relevant for our purposes, we compute the capacity of a point in~$\R^N$.
\begin{prop}[Capacity of a point]\label{Capacity_point}
  Let $x_0\in\Omega$. Then $\capVm(\{x_0\})=0$ if $N\geq2m$, while
  $\capVm(\{x_0\})>0$ when $N\leq 2m-1$.
\end{prop}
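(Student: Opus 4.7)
The plan is to split according to the two regimes and to invoke Proposition \ref{Capacity_zero} (i)$\Leftrightarrow$(ii) for the subcritical case, while exhibiting an explicit minimizing sequence for the (super)critical case.

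For $N \leq 2m-1$, equivalently $m > N/2$, the Sobolev embedding $H^m(\Omega) \hookrightarrow C^0(\overline{\Omega})$ holds (recall $\Omega$ is bounded smooth), so the point evaluation at $x_0$ is a continuous linear functional on $V^m(\Omega)$. Every element of $V^m_0(\Omega \setminus \{x_0\})$ is the $V^m$-limit of functions in $X^m_0(\Omega \setminus \{x_0\})$, each of which vanishes identically in some neighbourhood of $x_0$; by continuity of the point evaluation, every element of $V^m_0(\Omega \setminus \{x_0\})$ must vanish at $x_0$. On the other hand, the reference cutoff $\eta_K$ satisfies $\eta_K(x_0) = 1$, so $\eta_K \not\in V^m_0(\Omega \setminus \{x_0\})$. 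Hence $V^m_0(\Omega \setminus \{x_0\}) \subsetneq V^m(\Omega)$, and Proposition \ref{Capacity_zero} forces $\capVm(\{x_0\}) > 0$.

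For $N \geq 2m$ the strategy is to produce a family $\{\tau_\varepsilon\}_{\varepsilon>0} \subset C^\infty_c(\Omega)$ with $\tau_\varepsilon \equiv 1$ on a small ball around $x_0$ and $\|\nabla^m \tau_\varepsilon\|_{L^2(\Omega)} \to 0$. Since $\tau_\varepsilon$ has compact support inside $\Omega$, it belongs to $V^m(\Omega)$ in both the Dirichlet and Navier settings. Choosing $\eta_K \equiv 1$ on this same ball, the difference $\tau_\varepsilon - \eta_K$ vanishes identically near $x_0$ and is compactly supported in $\Omega$; thus it lies in $X^m_0(\Omega \setminus \{x_0\}) \subset V^m_0(\Omega \setminus \{x_0\})$. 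Then by \eqref{capVm},
\begin{equation*}
\capVm(\{x_0\}) \leq \int_\Omega |\nabla^m \tau_\varepsilon|^2 \xrightarrow[\varepsilon \to 0]{} 0.
\end{equation*}

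The construction of $\tau_\varepsilon$ is the main technical point. In the supercritical case $N > 2m$, a dilated standard cutoff $\tau_\varepsilon(x) := \rho(|x-x_0|/\varepsilon)$ with $\rho \in C^\infty_c([0,1))$, $\rho \equiv 1$ near $0$, satisfies $|\nabla^k \tau_\varepsilon| \lesssim \varepsilon^{-k}$ on its annular support of measure $\lesssim \varepsilon^N$, giving $\|\nabla^m \tau_\varepsilon\|_2^2 \lesssim \varepsilon^{N-2m} \to 0$. The harder conformal case $N = 2m$ requires a logarithmic truncation of Maz'ya type: set $\tau_\varepsilon(x) := \varphi\bigl(\log|x-x_0| / \log\varepsilon\bigr)$ for a fixed $\varphi \in C^\infty(\R)$ with $\varphi \equiv 1$ on $[1,\infty)$ and $\varphi \equiv 0$ on $(-\infty, 1/2]$, so that $\tau_\varepsilon \equiv 1$ on $B_\varepsilon(x_0)$ and $\tau_\varepsilon \equiv 0$ outside $B_{\sqrt{\varepsilon}}(x_0)$. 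A chain-rule count gives $|\nabla^k \tau_\varepsilon(x)| \lesssim \bigl(|\log \varepsilon|\, |x-x_0|^k\bigr)^{-1}$ on the annulus, and the critical top-order integral
\begin{equation*}
\int_{\varepsilon}^{\sqrt\varepsilon} \frac{r^{N-1}\, dr}{|\log\varepsilon|^2\, r^{2m}} = \frac{1}{|\log\varepsilon|^2} \int_\varepsilon^{\sqrt\varepsilon} \frac{dr}{r} = \frac{1}{2\,|\log\varepsilon|} \xrightarrow[\varepsilon \to 0]{} 0
\end{equation*}
controls $\|\nabla^m \tau_\varepsilon\|_2^2$, while the lower-order derivatives contribute subleading terms. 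This logarithmic estimate in the critical dimension is the step I expect to be the main obstacle.
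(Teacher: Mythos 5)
Your proof is correct and follows essentially the same route as the paper: for $N\leq 2m-1$ both rely on the Sobolev embedding $V^m(\Omega)\hookrightarrow C^0(\overline\Omega)$, and for $N\geq 2m$ both construct a family of shrinking cutoffs---a scaled bump for $N>2m$ and a Maz'ya-type logarithmic truncation for the conformal case $N=2m$---whose $\|\nabla^m\cdot\|_{L^2}$ vanishes. The only cosmetic difference is that you avoid invoking Proposition~\ref{Capacity_monotonicity}(iii) to reduce to the Dirichlet case, noting instead that your compactly supported test functions belong to $V^m(\Omega)$ in both the Dirichlet and Navier settings.
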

\begin{proof}
 It is not restrictive to assume that
  $x_0=0\in\Omega$. If $N\leq2m-1$, then the embedding
  $V^m(\Omega)\hookrightarrow C^0(\Omegabar)$ is
  continuous i.e. $\|\nabla^m u\|_2\geq C(m,N,\Omega)\|u\|_\infty$ for all
  $u\in V^m(\Omega)$, with a constant $C(m,N,\Omega)>0$ which does not depend on $u$. In
  particular for those functions in $V^m(\Omega)$ for which $u(0)=1$,
  one has $\|\nabla^m u\|_2\geq C(m,N,\Omega)$. Hence the infimum in the
  definition of $\capVm(K)$ is strictly positive.
	
  In view of Proposition
  \ref{Capacity_monotonicity}(iii), it is sufficient to prove it for
  the Dirichlet case. Let $N\geq 2m+1$ and take a sequence of shrinking
  cut-off in the following way: let $\zeta\in C^\infty_0(B_2(0))$ such
  that $\zeta\equiv1$ on $B_1(0)$ and consider
  $\zeta_k(x):=\zeta(kx)$. One has that
  $\zeta_k\in C^\infty_0(B_{\frac2k}(0))$ and $\zeta_k\equiv1$ on
  $B_{\frac1k}(0)$, hence $\supp\,\zeta_k\subset\Omega$
  for $k\geq k_0=k_0(\dist(0,\dOmega))$. We compute
  \begin{equation*}
    \int_\Omega|\nabla^m\zeta_k|^2=\int_{B_{\frac2k}(0)}|k^m(\nabla^m\zeta)(kx)|^2dx=k^{2m-N}\int_{B_2(0)}|\nabla^m\zeta|^2\to0
  \end{equation*}
  as $k\to\infty$ since $2m-N<0$. Being such functions
  admissible for the minimization of $\capm$, we deduce
  $\capm(\{0\})=0$. The argument is similar for the case $N=2m$,
  provided we choose accurately the sequence of cut-off functions, see
  \cite[Proposition 7.6.1/2 and Proposition 13.1.2/2]{M}. For the sake
  of completeness, we retrace here the proof. Let $\alpha$ denote a
  function in $C^\infty\left([0,1]\right)$ equal to zero near $t=0$,
  to $1$ near $t=1$, and such that $0\leq\alpha(t)\leq1$. Define then
  $\zeta_\varepsilon:=\alpha(v_\varepsilon)$, where \begin{equation*}
    v_\varepsilon(x):=\begin{cases}
      1&\text{if }|x|\leq\varepsilon,\\
      \frac{\log|x|-\log\sqrt\varepsilon}{\log\varepsilon-\log\sqrt\varepsilon}&\text{if }\varepsilon\leq|x|\leq\sqrt\varepsilon,\\
      0&\text{if }|x|\geq\sqrt\varepsilon. \end{cases} \end{equation*}
  Notice that $v_\varepsilon$ is continuous but not $C^1$; on the
  other hand 
  $\zeta_\varepsilon\in C^\infty_0(B_{\sqrt\varepsilon}(0))$, since $\alpha$ is constant in a
  neighbourhood of $0$ and in a neighbourhood of $1$ by
  construction. Therefore 
  $\zeta_\varepsilon\in H^m_0(B_1(0))$ for any
  $\varepsilon\in(0,1)$. Moreover  $\zeta_\varepsilon\equiv 1$ in
  $B_{\varepsilon}(0)$ so that $\zeta_\varepsilon$ is an admissible
  test function in the minimization of $\capm(\{0\})$. 
By direct calculations, there exists a constant $C=C(m)>0$
(independent of $\varepsilon$)
such that
\begin{equation*}
  |\nabla^m\zeta_\varepsilon(x)|\leq
  \frac{C}{|\log\varepsilon|}\frac1{|x|^m}\quad\text{for all }\varepsilon<|x|<\sqrt\varepsilon,
\end{equation*}
whereas
\begin{equation*}
  \nabla^m\zeta_\varepsilon(x)=0\quad\text{if either $|x|\leq
    \varepsilon$ or $|x|\geq \sqrt\varepsilon$}.
\end{equation*}
Therefore 
\begin{equation*}
		\int_\Omega|\nabla^m\zeta_\varepsilon|^2\les\frac1{\log^2\varepsilon}\int_\varepsilon^{\sqrt\varepsilon}\frac1r\,dr=\frac1{2|\log\varepsilon|}\to0
	\end{equation*}
	as $\varepsilon\to0$. The argument is concluded as above. 
      \end{proof}

\subsection{Homogeneous Sobolev
  spaces and capacities in $\boldsymbol{\R^N}$}\label{Sec_Cap_RN}

\subsubsection{The homogeneous Sobolev spaces $\boldsymbol{D^{m,2}_0(\R^N)}$}\label{sec:homog-sobol-spac}
So far, we defined the notion of $V^m$-capacity for compact sets
contained in an open bounded smooth domain $\Omega\subset\R^N$. An analogous definition can
be given when $\Omega=\R^N$, provided the underlined space is of
homogeneous kind. We introduce the homogeneous Sobolev spaces
(sometimes referred to as Beppo Levi spaces)
$D^{m,2}_0(\R^N)$ as the completion of
  $C^\infty_0(\R^N)$ with respect to the norm
\begin{equation*}
\|u\|_{D^{m,2}_0(\R^N)}:=\left(\int_{\R^N}|\nabla^mu|^2\right)^\frac12.
\end{equation*}
Actually, the spaces $D^{m,2}_0(\R^N)$ are more commonly
  defined as the completion with respect to the norm $\|D^m\cdot\|_2$,
  i.e. with respect to the full tensor of all highest
  derivatives. However, the two definitions are
    equivalent since, by integration by parts, $\|D^m\cdot\|_2$ and $\|\nabla^m\cdot\|_2$ are
    equivalent norms on $C^\infty_0(\R^N)$, see e.g. \cite[Sec.2.2.1]{GGS}.

For large dimensions $N>2m$, the following Sobolev inequalities are well-known: for
  every $0\leq j\leq m$ there
  exists a positive constant $S(N,m,j)>0$ (depending only on $N$, $m$
  and $j$) such that
  \begin{equation}\label{eq:sob-ineq}
    S(N,m,j)\left(\int_{\R^N}|D^ju|^{2^*_{m,j}}\right)^{\frac{2}{2^*_{m,j}}}\leq
    \|D^mu\|^2_{L^2(\R^N)}\quad\text{for all }u\in C^\infty_0(\R^N),   
  \end{equation}
where $2^*_{m,j}:=\frac{2N}{N-2(m-j)}$. In particular, for $j=0$,
there exists a positive constant $S(N,m)>0$ such that
  \begin{equation*}
    S(N,m)\left(\int_{\R^N}|u|^{2^*_m}\right)^{\frac{2}{2^*_m}}\leq
    \|u\|_{D^{m,2}_0(\R^N)}\quad\text{for all }u\in C^\infty_0(\R^N),
  \end{equation*}
  where $2^*_m:=2^*_{m,0}=\frac{2N}{N-2m}$, see \cite[Theorem 2.3]{GGS}.
In view of \eqref{eq:sob-ineq}, if $N>2m$, one may also characterize 
$D^{m,2}_0(\R^N)$ as
\begin{equation*}
  D^{m,2}_0(\R^N)=\big\{u\in L^{2^*_m}(\R^N)\,\big|\,
D^j u\in L^{2^*_{m,j}}(\R^N)\text{ for all }0<j\leq m\big\}.
\end{equation*}
Analogously, for $K\subset\R^N$ compact, one may consider the exterior
domain $\Omega=\R^N\setminus K$ and define $D^{m,2}_0(\R^N\setminus
K)$  as the completion of $C^\infty_0(\R^N\setminus
K)$ with respect to the norm $\|\nabla^m\cdot\|_2$, which is characterized, for $N>2m$, as
$$
D^{m,2}_0(\R^N\setminus K)=\bigg\{
\begin{array}{ll}
u\in L^{2^*_m}(\R^N\setminus
K)\,\big|\,&
D^j u\in L^{2^*_{m,j}}(\R^N\setminus K)\text{ for all }0<j\leq m
                    \\[3pt]&\mbox{and}\ \psi u\in
                              H^m_0(\R^N\setminus K)\ \mbox{for all}\
\psi\in C^\infty_0(\R^N)
\end{array}
\bigg\}\,,$$ see \cite[Theorem II.7.6]{Galdi}.

\subsubsection{A Hardy-Rellich-type inequality with intermediate derivatives}\label{Sec_Hardy}
Besides Sobolev inequalities, an important tool in the theory of
Sobolev spaces in large dimensions $N>2m$ is represented by
Hardy-Rellich inequalities, which state that the Sobolev norm of the
highest order derivatives controls a singularly weighted Sobolev norm
of the function. We refer to \cite{DH} for such inequalities in
$H^m_0(\Omega)$ and to \cite{GGM,GGS_art} for their extensions to
$H^m_\vartheta(\Omega)$. In this section, inspired by \cite{PlP}, we
prove a Hardy-Rellich-type inequality for the space
$H^2_\vartheta(\Omega)$ including also the gradient term, which
provides a further characterization of the space $D^{2,2}_0(\R^N)$ for
$N>4$. It will be needed in Section \ref{Sec_blowup} to identify the
functional space containing the limiting profile in the blow-up
argument, when Navier BCs are imposed on $\dOmega$.
\begin{thm}\label{HR_ineq}
  Let $N>4$ and $\Omega\subset\R^N$ be a smooth bounded domain. Then,
  for every function $u\in H^2(\Omega)\cap H^1_0(\Omega)$, one has that
  $\frac u{|x|^2},\,\frac{\nabla u}{|x|}\in L^2(\Omega)$ and
	\begin{equation}\label{eq:hardy-inter_m=2}
		(N-4)^2\intOmega\frac{|u|^2}{|x|^4}\,dx+2(N-4)\intOmega\frac{|\nabla u|^2}{|x|^2}\,dx\leq\intOmega|\Delta u|^2\,dx.
	\end{equation}
\end{thm}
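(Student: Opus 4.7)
The plan is to prove the inequality for smooth approximations and then pass to the limit. By density of $C^\infty(\overline\Omega)\cap H^1_0(\Omega)$ in $H^2(\Omega)\cap H^1_0(\Omega)$, it suffices to establish \eqref{eq:hardy-inter_m=2} for $u\in C^\infty(\overline\Omega)$ with $u=0$ on $\dOmega$; once the estimate is known for such functions on the truncated domain $\Omega_\varepsilon:=\Omega\setminus \overline{B_\varepsilon(0)}$, the integrability $u/|x|^2,\,\nabla u/|x|\in L^2(\Omega)$ and the full inequality follow by monotone convergence as $\varepsilon\to 0^+$. (If $0\notin\overline\Omega$ the statement is trivial, so I assume $0\in\Omega$.)

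The heart of the argument is the pointwise inequality
\begin{equation*}
0\le\Big(\Delta u+(N-4)\tfrac{u}{|x|^2}\Big)^2=|\Delta u|^2+2(N-4)\tfrac{u\Delta u}{|x|^2}+(N-4)^2\tfrac{u^2}{|x|^4},
\end{equation*}
integrated over $\Omega_\varepsilon$. The point is then to rewrite $\int_{\Omega_\varepsilon}\tfrac{u\Delta u}{|x|^2}$ via two integrations by parts. Writing $\Delta u=\divv(\nabla u)$ and using $u=0$ on $\dOmega$, one obtains
\begin{equation*}
\int_{\Omega_\varepsilon}\tfrac{u\Delta u}{|x|^2}=-\int_{\Omega_\varepsilon}\tfrac{|\nabla u|^2}{|x|^2}+2\int_{\Omega_\varepsilon}\tfrac{u\,x\cdot\nabla u}{|x|^4}+B_\varepsilon^{(1)},
\end{equation*}
where $B_\varepsilon^{(1)}$ collects the surface integrals on $\partial B_\varepsilon(0)$. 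Next, since $2u\,x\cdot\nabla u=x\cdot\nabla(u^2)$ and $\divv(x/|x|^4)=(N-4)/|x|^4$, a further integration by parts gives
\begin{equation*}
2\int_{\Omega_\varepsilon}\tfrac{u\,x\cdot\nabla u}{|x|^4}=-(N-4)\int_{\Omega_\varepsilon}\tfrac{u^2}{|x|^4}+B_\varepsilon^{(2)},
\end{equation*}
again with boundary contribution $B_\varepsilon^{(2)}$ on $\partial B_\varepsilon(0)$. Substituting and rearranging yields
\begin{equation*}
\int_{\Omega_\varepsilon}|\Delta u|^2\ge 2(N-4)\int_{\Omega_\varepsilon}\tfrac{|\nabla u|^2}{|x|^2}+(N-4)^2\int_{\Omega_\varepsilon}\tfrac{u^2}{|x|^4}-2(N-4)\big(B_\varepsilon^{(1)}+B_\varepsilon^{(2)}\big).
\end{equation*}

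The step I expect to be most delicate, although not hard when $N>4$, is showing that the boundary terms $B_\varepsilon^{(i)}$ on $\partial B_\varepsilon(0)$ vanish as $\varepsilon\to 0^+$. Each such term is schematically bounded by $\varepsilon^{-k}$ (with $k\le 3$) multiplied by the surface measure $\varepsilon^{N-1}$ and by $L^\infty$-norms of $u$ and $\nabla u$ on $\overline\Omega$, yielding decay of order $\varepsilon^{N-4}\to 0$ under the assumption $N>4$; the choice of the constant $N-4$ inside the square was precisely tuned to make the $|x|^{-2}\nabla u$ cross term appear with the correct sign. Passing to the limit $\varepsilon\to 0^+$ using monotone convergence on the non-negative integrands on the right-hand side yields both the $L^2$ integrability of $u/|x|^2$ and $\nabla u/|x|$ and the inequality \eqref{eq:hardy-inter_m=2} for smooth $u$. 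Finally, approximating a general $u\in H^2(\Omega)\cap H^1_0(\Omega)$ by such smooth functions in the $H^2$-norm transfers the inequality to the full space, by Fatou's lemma on the left of the smooth-version inequality and continuity on the right.
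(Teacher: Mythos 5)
Your proposal is correct and follows essentially the same route as the paper's proof: both start from the square of $\Delta u+\lambda u/|x|^2$ (the paper writes it as a vector $\tfrac{x}{|x|}\Delta u+\lambda u\tfrac{x}{|x|^3}$, which evaluates to the same scalar), perform the same two integrations by parts on $\int_{\Omega_\varepsilon}\tfrac{u\Delta u}{|x|^2}$, observe that the $\partial B_\varepsilon(0)$ boundary terms are $\Oo(\varepsilon^{N-4})\to 0$ for $N>4$, and then conclude by density of boundary-vanishing smooth functions plus Fatou's lemma. The only cosmetic difference is that you commit to the constant $N-4$ from the outset while the paper carries a parameter $\lambda$ and optimizes it at the end.
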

\begin{proof}
  Let $u\in C^\infty(\Omegabar)$ be such that $u|_{\dOmega}=0$. Let us
  assume that $0\in\Omega$. Let us
  introduce a parameter $\lambda$ to be fixed later and, for
  $\varepsilon>0$ small, let us denote
  $\Omega_\varepsilon:=\Omega\setminus B_\varepsilon(0)$. We have that 
	\begin{equation}\label{HR_eq1}
		\begin{split}
			0&\leq\int_{\Omega_\varepsilon}\left(\frac x{|x|}\Delta u+\lambda u\frac x{|x|^3}\right)^2dx=\int_{\Omega_\varepsilon}(\Delta u)^2+\lambda^2\int_{\Omega_\varepsilon}\frac{u^2}{|x|^4}\,dx+2\lambda\int_{\Omega_\varepsilon}\frac u{|x|^2}\Delta u\,dx.
		\end{split}
	\end{equation}
	We can rewrite the third term as
	\begin{equation*}
		\begin{split}
			\int_{\Omega_\varepsilon}\frac u{|x|^2}\Delta u\,dx&=-\int_{\Omega_\varepsilon}\nabla u\left(\frac{\nabla u}{|x|^2}-2u\frac x{|x|^4}\right)dx+\int_\dOmega\frac u{|x|^2}\partial_\nu u\,d\sigma-\frac1{\varepsilon^2}\int_{\partial B_\varepsilon}u\nabla u\cdot\frac x\varepsilon\,d\sigma\\
			&=-\int_{\Omega_\varepsilon}\frac{|\nabla u|^2}{|x|^2}\,dx+\int_{\Omega_\varepsilon}\nabla(u^2)\frac x{|x|^4}\,dx+\Oo(\varepsilon^{N-3})\\
			&=-\int_{\Omega_\varepsilon}\frac{|\nabla u|^2}{|x|^2}\,dx-(N-4)\int_{\Omega_\varepsilon}\frac{u^2}{|x|^4}\,dx+\int_\dOmega u^2\frac{x\cdot\nu}{|x|^4}\\
			&\quad-\int_{\partial B_\varepsilon}\frac{u^2}{\varepsilon^3}\,d\sigma+\Oo(\varepsilon^{N-3})
		\end{split}
	\end{equation*}
as $\varepsilon \to0$. Since the third term vanishes and the second to last term is $\Oo(\varepsilon^{N-4})$ as $\varepsilon\to0$, from \eqref{HR_eq1} we get
\begin{equation*}
	0\leq\int_{\Omega_\varepsilon}(\Delta u)^2+\lambda^2\int_{\Omega_\varepsilon}\frac{u^2}{|x|^4}\,dx-2\lambda\int_{\Omega_\varepsilon}\frac{|\nabla u|^2}{|x|^2}\,dx-2\lambda(N-4)\int_{\Omega_\varepsilon}\frac{u^2}{|x|^4}\,dx+\Oo(\varepsilon^{N-4}).
\end{equation*}
Choosing now $\lambda=N-4$, we obtain
\begin{equation*}
	(N-4)^2\int_{\Omega_\varepsilon}\frac{u^2}{|x|^4}\,dx+2(N-4)\int_{\Omega_\varepsilon}\frac{|\nabla
          u|^2}{|x|^2}\,dx\leq\int_{\Omega_\varepsilon}(\Delta
        u)^2+\Oo(\varepsilon^{N-4}) \quad\text{as }\varepsilon\to0.
\end{equation*}
Inequality \eqref{eq:hardy-inter_m=2} follows by letting
$\varepsilon\to0$ and by density of the set  $\{u\in
C^\infty(\Omegabar)\,|\,u|_{\dOmega}=0\}$ in $H^2(\Omega)\cap
H^1_0(\Omega)$. If $0\not\in\Omega$ the above argument can be
  repeated by considering directly  in \eqref{HR_eq1} the integral on
  the whole $\Omega$.
\end{proof}

We observe that  \eqref{eq:hardy-inter_m=2} holds also for all functions in $C^\infty_0(\R^N)$ (since any function $u\in C^\infty_0(\R^N)$ is contained in some $H^2_\vartheta(\Omega)$). Therefore, by density of $C^\infty_0(\R^N)$ in $D^{2,2}_0(\R^N)$ and Fatou's Lemma we easily deduce that, if $N>4$, then 
\begin{equation*}
    (N-4)^2\int_{\R^N}\frac{|u|^2}{|x|^4}\,dx+2(N-4)\int_{\R^N}\frac{|\nabla u|^2}{|x|^2}\,dx\leq\int_{\R^N}|\Delta u|^2\,dx
  \end{equation*}
  for all $u\in D^{2,2}_0(\R^N)$.
In particular we have that $D^{2,2}_0(\R^N)$ is contained in the space
$$\cS^2(\R^N):=\left\{u\in
  H^2_{loc}(\R^N)\,\Big|\,\frac{\nabla^{2-k}u}{|x|^k}\in
  L^2(\R^N)\text{ for }k\in\{0,1,2\}\right\}.$$
We prove now that the two functional spaces coincide.
\begin{prop}\label{HR_spaces}
	$\cS^2(\R^N)=D^{2,2}_0(\R^N)$ for all $N>4$.
\end{prop}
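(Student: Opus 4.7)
The inclusion $D^{2,2}_0(\R^N)\subseteq\cS^2(\R^N)$ has already been recorded just before the statement, as a consequence of Theorem \ref{HR_ineq} plus Fatou's Lemma applied along the density of $C^\infty_0(\R^N)$ in $D^{2,2}_0(\R^N)$. The plan is therefore to attack only the reverse inclusion $\cS^2(\R^N)\subseteq D^{2,2}_0(\R^N)$, via a truncation-plus-density scheme in which the three weighted $L^2$-conditions defining $\cS^2(\R^N)$ are used precisely to absorb the commutator terms produced by a spatial cut-off.

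Given $u\in\cS^2(\R^N)$, I would fix $\phi\in C^\infty_0(\R^N)$ with $\phi\equiv1$ on $B_1(0)$ and $\supp\,\phi\subset B_2(0)$, set $\phi_R(x):=\phi(x/R)$ and consider the truncations $u_R:=\phi_R u$. Since $u\in H^2_{loc}(\R^N)$, each $u_R$ lies in $H^2(\R^N)$ with compact support inside $B_{2R}(0)$, so it belongs to $H^2_0$ of any bounded open set containing $\overline{B_{2R}(0)}$ in its interior; the standard density of $C^\infty_0$ in $H^2_0$ of a bounded domain then yields $u_R\in D^{2,2}_0(\R^N)$, because convergence in $H^2$ of a sequence in $C^\infty_0$ to $u_R$ implies convergence in the norm $\|\nabla^2\cdot\|_2=\|\Delta\cdot\|_2$.

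The technical heart of the argument is to show that $\Delta u_R\to \Delta u$ in $L^2(\R^N)$ as $R\to+\infty$; granted this, $(u_R)_R$ will be Cauchy in $D^{2,2}_0(\R^N)$. Expanding
\[
\Delta(u-u_R)=(1-\phi_R)\,\Delta u-2\,\nabla\phi_R\cdot\nabla u-u\,\Delta\phi_R,
\]
the first summand tends to zero in $L^2(\R^N)$ by dominated convergence since $\Delta u\in L^2(\R^N)$. The other two are supported in $B_{2R}(0)\setminus B_R(0)$ and satisfy $|\nabla\phi_R|\leq C/R$ and $|\Delta\phi_R|\leq C/R^2$; since on that annulus $1/R^{2k}\leq C_k/|x|^{2k}$, one obtains
\[
\|\nabla\phi_R\cdot\nabla u\|_2^2\les\int_{|x|\geq R}\frac{|\nabla u|^2}{|x|^2},\qquad \|u\,\Delta\phi_R\|_2^2\les\int_{|x|\geq R}\frac{|u|^2}{|x|^4},
\]
and both right-hand sides vanish as $R\to+\infty$ exactly because $\nabla u/|x|$ and $u/|x|^2$ belong to $L^2(\R^N)$ by definition of $\cS^2(\R^N)$.

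It then remains to identify the $D^{2,2}_0$-limit $\tilde u$ of $(u_R)_R$ with $u$ itself. Here I would invoke the Sobolev embedding $D^{2,2}_0(\R^N)\hookrightarrow L^{2^*_2}(\R^N)$ recalled in Section \ref{sec:homog-sobol-spac}, which gives $u_R\to\tilde u$ in $L^{2^*_2}(\R^N)$ and hence, along a subsequence, almost everywhere; on the other hand $\phi_R(x)=1$ whenever $|x|\leq R$, so $u_R\to u$ pointwise on the whole of $\R^N$, forcing $\tilde u=u$ almost everywhere and placing $u$ in $D^{2,2}_0(\R^N)$. The one step I expect to be mildly delicate is the simultaneous control of the two commutator terms, since the cut-off has to be rescaled so that the two different negative weights $|x|^{-2}$ and $|x|^{-4}$ appearing in the definition of $\cS^2(\R^N)$ are both correctly matched; the remaining density and identification steps are essentially routine.
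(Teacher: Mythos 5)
Your argument is essentially the paper's proof: both take $u\in\cS^2(\R^N)$, truncate with a dilated cut-off $\phi_R=\phi(\cdot/R)$ supported in $B_{2R}$ and equal to $1$ on $B_R$, observe $\phi_Ru\in H^2_0(B_{2R})\subset D^{2,2}_0(\R^N)$, and then split $\Delta\big((1-\phi_R)u\big)$ into the same three terms, controlling the two commutator pieces precisely by the weighted integrals $\int_{|x|\geq R}|\nabla u|^2/|x|^2$ and $\int_{|x|\geq R}|u|^2/|x|^4$ furnished by the definition of $\cS^2(\R^N)$. The one place you go slightly beyond the paper is the final identification of the $D^{2,2}_0$-limit with $u$ via the Sobolev embedding and pointwise convergence on expanding balls; the paper dispatches this in a single sentence ("$C^\infty_0(\R^N)$ is dense in $\cS^2(\R^N)$ in the $D^{2,2}_0$-norm"), so your extra step is a harmless and indeed welcome clarification rather than a deviation.
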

\begin{proof}
	We have already observed above that
        $\cS^2(\R^N)\supseteq D^{2,2}_0(\R^N)$. Let now
        $u\in\cS^2(\R^N)$, $\eta$ be a cutoff function with support in
        $B_2(0)$ and which takes the value $1$ in $B_1(0)$, and define
        $\eta_R:=\eta\left(\tfrac\cdot R\right)$ for all $R>0$. Hence
        $\eta_Ru\in H^2_0(B_{2R}(0))$ and we claim that
        $\|\Delta(\eta_Ru- u)\|_{2}\to0$ as $R\to+\infty$. Indeed,
	\begin{equation*}
		\|\Delta\big(\left(\eta_R-1\right)u\big)\|_2^2\les\|(\Delta\eta_R)u\|_2^2+\|\nabla\eta_R\nabla u\|_2^2+\|\left(\eta_R-1\right)\Delta u\|_2^2,
	\end{equation*}
	where
	\begin{equation*}
		\|\left(\eta_R-1\right)\Delta u\|_2^2\leq\int_{\R^N\setminus B_R}|\Delta u|^2\to 0
	\end{equation*}
	as $R\to+\infty$, and for $k\in\{1,2\}$,
	\begin{equation*}\label{S2D22_2}
		\begin{split}
			\|\nabla^k\eta_R\nabla^{2-k}u\|_2^2&=\int_{R<|x|<2R}\frac1{R^{2k}}\left|\big(\nabla^k\eta\big)\left(\frac xR\right)\right|^2|\nabla^{2-k}u|^2\,dx\\
			&\les 2^{2k}\int_{\R^N\setminus B_R(0)}\frac{|\nabla^{2-k}u|^2}{|x|^{2k}}\,dx\to0
		\end{split}
	\end{equation*}
	as $R\to+\infty$. By density of $C^\infty_0(B_{2R}(0))$
          in $H^2_0(B_{2R}(0))$, this implies that $C^\infty_0(\R^N)$
        is dense in $\cS^2(\R^N)$ in the $D^{2,2}_0$-norm, thus
        concluding the proof.
\end{proof}

\subsubsection{Capacities in $\boldsymbol{\R^N}$}
Similarly to the case of a bounded set $\Omega$ described in  Section \ref{Sec_cap_Omega}, for any compact set $K\subset\R^N$ and any $u\in
D^{m,2}_0(\R^N)$ with $N>2m$, we define
\begin{equation}\label{capp_RN}
	\capp_{m,\R^N}(K,u):=\inf\left\{\int_{\R^N}|\nabla^mf|^2\,\Big|\,f\in D^{m,2}_0(\R^N),\, f-u\in D^{m,2}_0(\R^N\setminus K)\right\},
\end{equation}
which we simply denote by $\capp_{m,\R^N}(K)$ when $u=\eta_K$. The
argument for the attainability of the capacity is easily adapted from
the one for $\capp_{V^m,\Omega}(K,u)$. Analogous properties hold also
in this setting, in particular it is true that
\begin{equation}\label{eq:dmek}
  D^{m,2}_0(\R^N)=D^{m,2}_0(\R^N\setminus
  K)\quad\text{if and only if}\quad\capp_{m,\R^N}(K)=0
\end{equation}
which directly implies that
$$\capp_{m,\R^N}(K)=0\quad\;\Leftrightarrow\quad\;\capp_{m,\R^N}(K,u)=0\;\,\mbox{for all}\;\, u\in D^{m,2}_0(\R^N).$$
The analogue of \eqref{eq:dmek} in the
case of a bounded domain $\Omega$ is contained in Proposition
\ref{Capacity_zero} and its proof relies on \eqref{eq:equiv-norme},
which in turn is based on a
Poincar\'{e} inequality, the latter being no longer
valid in $\R^N$. However, if $N>2m$, the role played
  by Poincar\'{e} inequalities can be replaced by the critical Sobolev embedding. Although
known, here we retrace the proof of \eqref{eq:dmek} for the sake of completeness. Let
$u\in C^\infty_0(\R^N)$, set $\Sigma:=\supp(u)$, and consider
$(w_i)_i\subset D^{m,2}_0(\R^N)$ with
$w_i-\eta_K\in D^{m,2}_0(\R^N\setminus K)$ such that
$\|\nabla^mw_i\|_2^2\to0$ as $i\to+\infty$. Then
$v_i:=u(1-w_i)\in D^{m,2}_0(\R^N\setminus K)$ and, defining
$q_j:=2^*_{m,j}=\frac{2N}{N-2(m-j)}\geq2$ for
$j\in\{0,\dots,m\}$, one has that
\begin{equation*}
	\begin{split}
		\|\nabla^m(u-v_i)\|_2^2&=\|\nabla^m(uw_i)\|_{L^2(\Sigma)}^2\les\|u\|_{W^{m,\infty}(\R^N)}^2\sum_{j=0}^m\int_{\Sigma}|D^jw_i|^2\\
		&\les\sum_{j=0}^m\left(\int_{\Sigma}|D^jw_i|^{q_j}\right)^{\frac2{q_j}}\leq\sum_{j=0}^m\|D^jw_i\|_{L^{q_j}(\R^N)}^2\\
		&\les\|D^mw_i\|_{L^2(\R^N)}^2\les\|\nabla^mw_i\|_{L^2(\R^N)}^2\to0,
	\end{split}
\end{equation*}
where in the last steps we used H\"older inequality, the
Sobolev inequality \eqref{eq:sob-ineq}, and the equivalence of the norms $\|D^m\cdot\|_2$ and $\|\nabla^m\cdot\|_2$.

\vskip0.2truecm
For later use, we also recall the right continuity of the capacity, see \cite[Sec. 13.1.1]{M}. 
\begin{lem}\label{capacity_right_continuity}
		Let $K$ be a compact subset of
                $\Omega\subset\R^N$. For any $\varepsilon>0$ there
                exists a neighbourhood $\cU(K)\subset\Omega$ such that
                for any compact set $\widetilde K$ with
                $K\subset\widetilde K\subset\cU(K)$, there
                holds
		\begin{equation*}
			\capp_{m,\Omega}(\widetilde K)\leq\capp_{m,\Omega}(K)+\varepsilon.
		\end{equation*}
\end{lem}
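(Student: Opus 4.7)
The plan is to approximate the capacitary potential of $K$ by admissible competitors that happen to be identically equal to $1$ on a whole open neighborhood of $K$, and then to reuse such approximations as competitors in the variational problem defining $\capp_{m,\Omega}(\widetilde K)$ for every compact $\widetilde K$ sufficiently close to $K$.

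More precisely, I would start from the capacitary potential $W_K\in V^m(\Omega)$, which satisfies $W_K-\eta_K\in V^m_0(\Omega\setminus K)$ and $\int_\Omega|\nabla^m W_K|^2=\capp_{m,\Omega}(K)$. By definition of $V^m_0(\Omega\setminus K)$, there exists a sequence $(\varphi_n)_n\subset X^m_0(\Omega\setminus K)$ with $\varphi_n\to W_K-\eta_K$ in $V^m(\Omega)$. Setting $u_n:=\eta_K+\varphi_n\in V^m(\Omega)$, one has $u_n\to W_K$ in $V^m(\Omega)$, so I would fix $n_0$ large enough that $\int_\Omega|\nabla^m u_{n_0}|^2\leq\capp_{m,\Omega}(K)+\varepsilon$. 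By construction, $\supp\varphi_{n_0}$ is disjoint from an open neighborhood $\cU_0\subset\Omega$ of $K$, while $\eta_K\equiv 1$ on an open neighborhood $\cU_1\subset\Omega$ of $K$; hence $u_{n_0}\equiv 1$ on the open neighborhood $\cU(K):=\cU_0\cap\cU_1$ of $K$, which will serve as the required neighborhood in the statement.

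For any compact $\widetilde K$ with $K\subset\widetilde K\subset\cU(K)$, I would then pick a smooth cutoff $\eta_{\widetilde K}$ with compact support inside $\cU(K)$ and equal to $1$ on a neighborhood of $\widetilde K$, which is possible since $\widetilde K\subset\cU(K)$ (recall also that $\capp_{m,\Omega}(\widetilde K)$ does not depend on the specific choice of such a cutoff, as two admissible cutoffs differ by a smooth function compactly supported in $\Omega\setminus\widetilde K$). The difference
$$
u_{n_0}-\eta_{\widetilde K}=(\eta_K-\eta_{\widetilde K})+\varphi_{n_0}
$$
belongs to $X^m(\Omega)$ and vanishes on a neighborhood of $\widetilde K$, since there both $\eta_K$ and $\eta_{\widetilde K}$ equal $1$ and $\varphi_{n_0}$ vanishes; hence it lies in $X^m_0(\Omega\setminus\widetilde K)\subset V^m_0(\Omega\setminus\widetilde K)$. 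This makes $u_{n_0}$ an admissible competitor for the variational problem \eqref{capVm} with $\widetilde K$ in place of $K$, yielding
$$
\capp_{m,\Omega}(\widetilde K)\leq \int_\Omega|\nabla^m u_{n_0}|^2\leq\capp_{m,\Omega}(K)+\varepsilon,
$$
as desired.

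The point I expect to have to argue carefully is precisely the admissibility check $u_{n_0}-\eta_{\widetilde K}\in V^m_0(\Omega\setminus\widetilde K)$, especially in the Navier framework where zero-extension outside $\Omega$ is not available and membership in $V^m_0$ is less transparent. Here, however, the situation is benign: $\varphi_{n_0}\in X^m_0(\Omega\setminus K)\subset X^m(\Omega)$ already carries the Navier conditions on $\dOmega$, while $\eta_K,\eta_{\widetilde K}$ are smooth and compactly supported in $\Omega$, so the sum lies trivially in $X^m(\Omega)$ and only the (already verified) support condition near $\widetilde K$ needs to be checked.
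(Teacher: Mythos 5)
The paper does not prove Lemma \ref{capacity_right_continuity}; it merely cites Maz'ya's book \cite{M}, Sec.\ 13.1.1, for this fact, so there is no in-paper proof to compare against. Your argument is correct and is essentially the standard right-continuity argument: approximate the capacitary potential $W_K$ by a competitor $u_{n_0}=\eta_K+\varphi_{n_0}$ whose admissibility is witnessed by a smooth function $\varphi_{n_0}\in X^m_0(\Omega\setminus K)$ vanishing on a fixed open neighbourhood of $K$, observe that $u_{n_0}\equiv 1$ on an open neighbourhood $\cU(K)$ of $K$, and note that the very same $u_{n_0}$ is admissible for $\capp_{m,\Omega}(\widetilde K)$ for every compact $\widetilde K$ sandwiched between $K$ and $\cU(K)$. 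The admissibility check $u_{n_0}-\eta_{\widetilde K}\in V^m_0(\Omega\setminus\widetilde K)$ is carried out correctly (vanishing near $\widetilde K$, smoothness, compact support in $\Omega$, and --- in the Navier case --- membership in $C^m_\vartheta(\Omegabar)$ via the inclusion $X^m_0(\Omega\setminus K)\subset X^m(\Omega)$ together with the fact that smooth compactly supported functions lie in $C^m_\vartheta(\Omegabar)$), and the observation that the value of $\capp_{m,\Omega}(\widetilde K)$ is independent of the choice of cutoff $\eta_{\widetilde K}$ is stated and justified. One small presentational remark: the lemma is stated for $\capp_{m,\Omega}$, which in the paper's notation \eqref{eq:notcapm} is the Dirichlet capacity, so your extra discussion of the Navier case is not strictly required here, though it is correct and harmless (and shows the lemma actually holds for both $\capVm$ variants).
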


Although the notion of capacity needed for the blow-up analysis in
Section \ref{Sec_blowup} is the one given in \eqref{capp_RN}, sometimes it is
useful to consider a second one defined as 
\begin{equation}\label{Capp}
	\Capp(K):=\inf\left\{\int_{\R^N}|\nabla^mf|^2\,\Big|\,f\in D^{m,2}_0(\R^N),\, f\geq1\;\mbox{a.e. on}\;K\right\},
\end{equation}
which is well-defined for $N>2m$, and similarly ${\rm Cap}_{m,\Omega}^\geq$ for $\Omega\subset\subset\R^N$, see \cite{M,M1}. One of the advantages in this approach is that the capacitary potential associated to $\Capp$ is positive, see \cite[Sec. 3.1.2]{GGS}. 
Note that for all $\Omega\subset\R^N$ one has ${\rm
  Cap}_{m,\Omega}^\geq(K)\leq\capp_{m,\Omega}(K)$ because the class of
test functions considered in \eqref{Capp} includes the one considered
for the minimization in \eqref{capp_RN}. Actually it turns out that
the two definitions are equivalent, in the sense that
  the two capacities estimate each other, as stated below. We report here the result for $\Omega=\R^N$, referring to \cite{M1} for the general case $\Omega\subsetneq\R^N$.
\begin{lem}(\cite{M}, Theorem 13.3.1)\label{Lemma_Mazya_equivalence}
	Let $m\in\N\setminus\{0\}$ and $N>2m$. There exists a constant $c>0$ such that
	\begin{equation*}
		c\,\capp_{m,\R^N}(K)\leq\Capp(K)\leq \capp_{m,\R^N}(K)
	\end{equation*}
	for any compact set $K\subset\R^N$.
\end{lem}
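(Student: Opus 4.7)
The right-hand inequality $\Capp(K)\leq \capp_{m,\R^N}(K)$ is essentially noted in the paragraph preceding the statement: any $f$ admissible for $\capp_{m,\R^N}(K)$, i.e.\ with $f-\eta_K\in D^{m,2}_0(\R^N\setminus K)$, agrees with $\eta_K\equiv 1$ in a neighbourhood of $K$ and is therefore a fortiori admissible for $\Capp(K)$; passing to the infimum gives the claim.

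For the non-trivial inequality $c\,\capp_{m,\R^N}(K)\leq \Capp(K)$, the plan is to convert any $f\in D^{m,2}_0(\R^N)$ with $f\geq 1$ a.e.\ on $K$ into a function $g$ admissible for $\capp_{m,\R^N}(K)$, at the price of a dimensional constant $C=C(N,m)$ in the $D^{m,2}_0$-norm. The candidate is $g:=\Psi(\tilde f)$, where $\Psi\colon\R\to[0,1]$ is a fixed smooth nondecreasing cutoff with $\Psi(t)=0$ for $t\leq 1/2$ and $\Psi(t)=1$ for $t\geq 1$, and $\tilde f$ is a mollification (and possibly a slight dilation) of $f$ arranged so that $\tilde f>1$ on an open neighbourhood $U$ of the compact set $K$. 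Then $g\equiv 1$ on $U\supset K$, so $g-\eta_K$ vanishes near $K$; using smooth cutoffs supported off $K$ together with the density of $C^\infty_0(\R^N\setminus K)$ in $D^{m,2}_0(\R^N\setminus K)$, one concludes $g-\eta_K\in D^{m,2}_0(\R^N\setminus K)$, so that $g$ is indeed admissible for $\capp_{m,\R^N}(K)$.

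The heart of the proof is then the bound $\|\nabla^m g\|_2^2\leq C\|\nabla^m\tilde f\|_2^2\leq C\|\nabla^m f\|_2^2$. By the Fa\`a di Bruno formula, $\nabla^m g$ is a linear combination of terms of the form $\Psi^{(k)}(\tilde f)\,\nabla^{j_1}\tilde f\cdots \nabla^{j_k}\tilde f$ with $k=1,\dots,m$ and $\sum_{i=1}^k j_i=m$, $j_i\geq 1$. The case $k=1$ produces $\Psi'(\tilde f)\,\nabla^m\tilde f$, directly bounded by $\|\Psi'\|_\infty\|\nabla^m\tilde f\|_2$. For $k\geq 2$ one applies H\"older's inequality with the exponents $p_i=2m/j_i$ (which indeed satisfy $\sum 1/p_i=1/2$) and then controls each factor $\|\nabla^{j_i}\tilde f\|_{L^{2m/j_i}}$ through a Gagliardo--Nirenberg interpolation between $\|\nabla^m\tilde f\|_{L^2}$ and the $L^\infty$ bound available on the ``transition layer'' $\{1/2\leq \tilde f\leq 1\}$, on whose support the summand is concentrated.

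The main obstacle is carrying out this interpolation with a constant independent of both $f$ and $K$, since no global $L^\infty$ bound on $\tilde f$ is available, only the pointwise bound $\tilde f\leq 1$ on $\supp \Psi^{(k)}(\tilde f)$ for $k\geq 1$; worse, the level sets involved need not be regular. The resolution I would adopt is the level-set slicing technique of Maz'ya~\cite[Ch.~13]{M}, which expresses $\|\nabla^m\tilde f\|_{L^2}^2$ (or rather the $\|D^m\cdot\|_{L^2}^2$-norm, equivalent for $C^\infty_0$ functions) as an integral of the capacities of the superlevel sets $\{\tilde f\geq t\}$ in $t$. Since $K\subset\{\tilde f\geq 1\}$ and $\Capp$ is monotone with respect to inclusion, this directly produces $\Capp(K)\leq C\|\nabla^m\tilde f\|_{L^2}^2$ with $C$ depending only on $N$ and $m$, thereby bypassing the chain-rule estimates altogether. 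Taking the infimum over admissible $f$ and sending the mollification parameter to zero finally yields the lower inequality with an explicit constant $c=c(N,m)>0$.
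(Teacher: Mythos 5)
The paper does not prove this lemma; it is quoted verbatim from Maz'ya \cite[Theorem 13.3.1]{M}, so there is no in-paper proof to compare against. I will therefore assess the internal coherence of your sketch.

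The easy inequality $\Capp(K)\leq\capp_{m,\R^N}(K)$ is fine and matches the remark the paper makes just before the lemma, though your phrasing is slightly off: $f-\eta_K\in D^{m,2}_0(\R^N\setminus K)$ does not force $f$ to coincide with $\eta_K$ on an open neighbourhood of $K$; it only gives $f=1$ on $K$ in a quasi-everywhere sense, which is still enough for $f$ to be admissible in \eqref{Capp}.

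The hard inequality $c\,\capp_{m,\R^N}(K)\leq\Capp(K)$ has a genuine gap. Your stated plan is correct: take $f$ admissible for $\Capp(K)$, produce a truncate $g=\Psi(\tilde f)$ which is $\equiv 1$ on a neighbourhood of $K$ (hence admissible for $\capp_{m,\R^N}(K)$), and bound $\|\nabla^m g\|_2^2\leq C\|\nabla^m f\|_2^2$. You correctly identify that the Fa\`a di Bruno expansion introduces products of intermediate derivatives of $\tilde f$, and that controlling these with a constant depending only on $N$ and $m$ is the crux of the matter. But the ``resolution'' you propose does not fix this. The Maz'ya level-set slicing (the strong capacitary inequality) yields
\[
\int_0^\infty \Capp\bigl(\{\tilde f\geq t\}\bigr)\,d(t^2)\lesssim \|\nabla^m\tilde f\|_2^2,
\]
and combined with $K\subset\{\tilde f\geq 1\}$ and monotonicity it gives only $\Capp(K)\leq C\|\nabla^m f\|_2^2$. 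Infimizing over admissible $f$ then produces the tautology $\Capp(K)\leq C\,\Capp(K)$, not the inequality $\capp_{m,\R^N}(K)\leq C\,\Capp(K)$ that you need. In other words, the level-set technique quantifies the ``$\geq$''-capacity, precisely the side that is \emph{already} known to be smaller; it does not ``bypass the chain-rule estimates,'' it replaces them with a statement that runs in the useless direction. To close the argument one must actually carry out the truncation estimate, and this requires Maz'ya's non-trivial smooth-truncation machinery (Gagliardo--Nirenberg interpolation on the transition layer combined with Hardy-type inequalities) rather than a capacitary reformulation of the $L^2$-norm; this is precisely the content of \cite[Ch.~13]{M}, and the present paper wisely imports it as a black box rather than re-deriving it.
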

\begin{remark}
  The constant
  $c$ appearing in Lemma \ref{Lemma_Mazya_equivalence} can be taken
  $1$ in the second-order case
  $m=1$, so the two definitions coincide, see
  e.g. \cite[Sec. 13.3]{M}. Whether this is the case also for the
  higher-order case it is still an open question.
\end{remark}
\begin{remark}\label{Remark_manifolds}
  As an extension of Proposition \ref{Capacity_point}, it is known
  that a regular manifold of dimension
  $d$ has zero capacity in the sense of \eqref{Capp} if and only if
  $d\leq
  N-2m$, see \cite[Corollary 5.1.15]{AH}. By Lemma
  \ref{Lemma_Mazya_equivalence}, this result holds also for the notion
  \eqref{capp_RN} of capacity.
\end{remark}

\section{Convergence and asymptotic expansion of the perturbed eigenvalues}\label{Sec_conv_eigv}
In this section we study stability and asymptotic expansion of the
perturbed eigenvalues of \eqref{eq} and \eqref{eq_Ke_Nav}, when from a
bounded domain $\Omega\subset\R^N$ one removes a compact set $K$ of
small $V^m$-capacity. The main goal is to extend the results obtained
in the second-order framework (in particular \cite[Theorem 1.4]{AFHL})
to the higher-order settings described in the introduction. The first
part is devoted to the proof of the stability result of Theorem
\ref{Alternative_conv_eigv}, which applies for rather general domains,
while in the second part we focus on the asymptotic expansion of
simple eigenvalues contained in Theorem \ref{Prop_eigv_cap}, for which
we require the notion of concentrating family of compact sets.

\subsection{Spectral stability: Proof of Theorem \ref{Alternative_conv_eigv}}\label{Section_spectral_stability}
We present here a simple and self-contained proof of the stability of
the point spectrum of the polyharmonic operator with respect to the
capacity of the removed compactum, in both Dirichlet and Navier
settings described in Section \ref{Sec_spaces}. It is essentially based on
the variational characterization of the eigenvalues
\eqref{var_char_eigv} and on the properties of the capacitary
potentials, and it follows some ideas exploited for the same question
in the second-order case in \cite[Theorem 1.2]{AFN}.

\begin{proof}[Proof of Theorem \ref{Alternative_conv_eigv}]
  Denote by $(u_i)_{i=1}^\infty$ an orthonormal basis of $L^2(\Omega)$
  such that each $u_i$ is an eigenfunction of problem
  \eqref{eq_unperturbed} associated to the eigenvalue
  $\lambda_i(\Omega)$. By classical elliptic regularity
  theory (see e.g. \cite[Section 2.5]{GGS}), the
  smoothness of $\dOmega$ yields $u_i\in C^m(\Omegabar)$ for all
  $i\in\N$. In order to deal at once with both cases
    (\textit{D}) and (\textit{N}), we introduce the function $H$
    defined by $H\equiv1$ in the Dirichlet case, and by $H=\eta_{K_0}$
    in the Navier case. Here $\eta_{K_0}$ is a cutoff function which
    is equal to $1$ in a neighbourhood on $K_0$ and with support
    contained in  some compact set $\widetilde{K_0}$ such
    that 
    $K_0\subset\widetilde{K_0}\subset\Omega$. The cutoff
    $\eta_{K_0}$ is introduced in order to enforce the boundary
    conditions on $\dOmega$ in the Navier case.
    
  Fix $j\in\N\setminus\{0\}$. For any $\ell\in\{1,\dots, j\}$, we define
  $\Phi_\ell:=u_\ell(1-HW_K)$ and introduce the subspace
  $X_j:=\spann\{\Phi_\ell\}_{\ell=1}^j$. Note that
  $\Phi_\ell\in V^m_0(\Omega\setminus K)$ by definition of the
  capacitary potential $W_K$, so
  $X_j\subset V^m_0(\Omega\setminus K)$.  The aim is to prove that
  $X_j$ is a $j$-dimensional subspace of $V^m_0(\Omega\setminus K)$ so
  that the right hand side of \eqref{var_char_eigv} is smaller than
  the maximum of the Rayleigh quotient over $X_j$.  Note that, by
  trivially extending the functions
  $\left\{\Phi_\ell\right\}_{\ell=1}^j$ in $K$, the integrals may be
  evaluated on $\Omega$. First,
	\begin{equation*}
		\int_\Omega\Phi_h\Phi_\ell=\int_\Omega u_h u_\ell-2\int_\Omega u_h u_\ell\,H W_K+\intOmega u_h u_\ell\,H^2 W_K^2,
	\end{equation*}
	therefore, by orthonormality of $\{u_\ell\}_{\ell=1}^j$ in $L^2(\Omega)$ and \eqref{eq:equiv-norme},
	\begin{equation}\label{Phi_L2product}
		\begin{split}
			\left|\int_\Omega\Phi_h\Phi_\ell-\delta_{h,\ell}\right|&\leq\max_{1\leq h\leq j}\|u_h\|_{L^\infty(\Omega)}^2\left(2|\Omega|^{1/2}\|W_K\|_2+\|W_K\|_2^2\right)\\
			&\les\left(\capVm(K)\right)^{1/2}+\capVm(K),
		\end{split}
	\end{equation}
	where $\delta_{h,\ell}$ stands for the Kroenecker delta.
	Let now $(W_n)_n\subset X^m(\Omega)$, see \eqref{Xm}, be a sequence of smooth functions which approximates in the $V^m$-norm the capacitary potential $W_K$ and satisfying $W_n=1$ in $\cU(K)$. The existence of such a sequence is guaranteed by the definition of $W_K$. Define moreover $\Phi^\ell_n:=u_\ell(1-HW_n)$ for $n\in\N$. Note that $\Phi^\ell_n\to\Phi_\ell$ in $V^m(\Omega)$ as $n\to+\infty$. We get 
	\begin{equation}\label{Phi_Hmproduct}
		\begin{split}
			\intOmega\nabla^m\Phi^h_n\nabla^m\Phi^\ell_n&=\intOmega\nabla^m\left(u_h(1-HW_n)\right)\nabla^m\left(u_\ell(1-HW_n)\right)\\
			&=\intOmega\nabla^mu_h\nabla^mu_\ell(1-HW_n)^2+
			T_m(u_h,u_\ell,W_n),
		\end{split}
	\end{equation}
	where the term $T_m$ contains all remaining products between
        the derivatives of $u_h$, $u_\ell$, and $1-HW_n$. To deal with
        the first term on the right in \eqref{Phi_Hmproduct}, consider
        $u_\ell(1-HW_n)^2\in V^m(\Omega)$ by regularity of the
        factors, as a test function for the eigenvalue problem
        \eqref{weak_sol_eigfct} for $\lambda_h(\Omega)$. One obtains
	\begin{equation*}
		\begin{split}
			\lambda_h(\Omega)\intOmega\Phi^h_n\Phi^\ell_n&=\lambda_h(\Omega)\intOmega u_hu_\ell(1-HW_n)^2=\intOmega\nabla^mu_h\nabla^m\left(u_\ell(1-HW_n)^2\right)\\
			&=\intOmega\nabla^mu_h\nabla^mu_\ell(1-HW_n)^2+
			\intOmega\nabla^mu_h S_m(u_\ell,W_n),
		\end{split}
	\end{equation*}
	where again all remaining products involving intermediate derivatives of $u_\ell$ and $1-HW_n$ are collected in the term $S_m$ (which is a vector if $m$ is odd). Isolating the first term on the right hand-side, and substituting it into \eqref{Phi_Hmproduct}, we get
	\begin{equation}\label{Phi_Hmproduct_pt2}
		\intOmega\nabla^m\Phi^h_n\nabla^m\Phi^\ell_n-\lambda_h(\Omega)\intOmega\Phi^h_n\Phi^\ell_n=-\intOmega\nabla^mu_h S_m(u_\ell,W_n)+T_m(u_h,u_\ell,W_n).
	\end{equation}
	Moreover, 
	\begin{equation}\label{Phi_Hmproduct_pt3}
		\begin{split}
						\Big|\intOmega&\nabla^mu_hS_m(u_\ell,W_n)\Big|\\
            &\leq\sum_{i=1}^m\sum_{\tau=0}^i\intOmega|\nabla^mu_h||D^{m-i}u_\ell||D^{i-\tau}(1-HW_n)||D^\tau(1-HW_n)\Big|\\
			&\leq\|\nabla^mu_h\|_\infty\|u_\ell\|_{W^{m,\infty}(\Omega)}\sum_{i=1}^m\sum_{\tau=0}^i\|D^{i-\tau}(1-HW_n)\|_2\|D^\tau(1-HW_n)\|_2\\
			&\leq\max_{1\leq k\leq j}\|u_k\|_{W^{m,\infty}(\Omega)}^2\sum_{i=1}^m\!\bigg(2\|D^i(HW_n)\|_2\|1-HW_n\|_2\\
			&\hskip4.2cm+\sum_{\tau=1}^{i-1}\|D^{i-\tau}(HW_n)\|_2\|D^\tau (HW_n)\|_2\bigg)\\
			&\leq C(\Omega,j,m)\Big(\|H\|_{W^{m,\infty}(\Omega)}\|W_n\|_{H^m(\Omega)}(|\Omega|^{1/2}+\|W_n\|_2)\\
            &\hskip2.2cm+\|H\|_{W^{m,\infty}(\Omega)}^2\|W_n\|_{H^m(\Omega)}^2\Big)\\
			&\les
                          C(\Omega,j,m)\Big(\|H\|_{W^{m,\infty}(\Omega)}\left(\capVm(K)+o_n(1)\right)^{1/2}\\
&\hskip2.2cm+\|H\|_{W^{m,\infty}(\Omega)}^2(\capVm(K)+o_n(1))\Big)\quad\text{as }
n\to\infty
,
		\end{split}
	\end{equation}
	having used the equivalence of the norms
        $\|\cdot\|_{H^m(\Omega)}$ and $\|\nabla^m\cdot\|_2$ in
        $V^m(\Omega)$. Here $o_n(1)$ denotes a real sequence
          converging to $0$ as $n\to+\infty$.
Analogously one may estimate the last term in \eqref{Phi_Hmproduct_pt2}: 
	\begin{equation}\label{Phi_Hmproduct_pt4}
		\begin{split}
			|T_m&(u_h,u_\ell,W_n)|\leq\sum_{\substack{i,\tau\in\{0,\dots,m\}\\(i,\tau)\neq(0,0)}}\intOmega|D^{m-i}u_h||D^i(1-HW_n)||D^{m-\tau}u_\ell||D^\tau(1-HW_n)|\\
			&\leq\max_{1\leq h\leq j}\|u_h\|_{W^{m,\infty}(\Omega)}^2\bigg(2\sum_{\tau=1}^m\|D^\tau(HW_n)\|_2\|1-HW_n\|_2\\
			&\hskip3cm\qquad+\sum_{i,\tau\in\{1,\dots,m\}}\|D^i(HW_n)\|_2\|D^\tau(HW_n)\|_2\bigg)\\
			&\leq
                        C(\Omega,j,m)\Big(\|H\|_{W^{m,\infty}(\Omega)}\|W_n\|_{H^m(\Omega)}\left(|\Omega|^{1/2}+\|W_n\|_2\right)\\
&\hskip3cm\qquad 
+\|H\|_{W^{m,\infty}(\Omega)}^2\|W_n\|_{H^m(\Omega)}^2\Big)\\
	&\les
                          C(\Omega,j,m)\Big(\|H\|_{W^{m,\infty}(\Omega)}\left(\capVm(K)+o_n(1)\right)^{1/2}\\
&\hskip2.2cm+\|H\|_{W^{m,\infty}(\Omega)}^2(\capVm(K)+o_n(1))\Big)\quad\text{as }
n\to\infty.
		\end{split}
	\end{equation}
	All in all, from \eqref{Phi_Hmproduct_pt2}-\eqref{Phi_Hmproduct_pt4}, 
	one concludes
	\begin{multline*}
		\left|\intOmega\nabla^m\Phi^h_n\nabla^m\Phi^\ell_n-\lambda_h(\Omega)\intOmega\Phi^h_n\Phi^\ell_n\right|\\\leq\widetilde C\left(\left(\capVm(K)+o_n(1)\right)^{1/2}+\capVm(K)+o_n(1)\right),
	\end{multline*}
	where $\widetilde C$ depends on $K_0$ in the Navier case. Letting now $n\to+\infty$ in both sides of the inequality, and taking into account \eqref{Phi_L2product}, one infers
	\begin{equation}\label{Phi_Hmproduct_end}
		\left|\intOmega\nabla^m\Phi_h\nabla^m\Phi_\ell-\lambda_h(\Omega)\delta_{h,\ell}\right|\leq\widetilde C\left(\left(\capVm(K)\right)^{1/2}+\capVm(K)\right).
	\end{equation}
        Hence, from \eqref{Phi_L2product} and
        \eqref{Phi_Hmproduct_end} one sees that, when $\capVm(K)$ is
        small enough, the functions $\{\Phi_\ell\}_{\ell=1}^j$ are
        linearly independent in $V^m_0(\Omega\setminus K)$, and so the
        subspace $X_j$ has dimension $j$. Therefore, recalling that
        $\lambda_h(\Omega)\leq\lambda_j(\Omega)$ for all
        $h\in\{1,\dots,j\}$, again from \eqref{Phi_L2product} and
        \eqref{Phi_Hmproduct_end} one finally infers that
	\begin{equation*}
		\begin{split}
			\lambda_j(\Omega\setminus K)&\leq\max_{\substack{\left(\alpha_1,\dots,\alpha_j\right)\in\R^j\\\sum_{i=1}^j\alpha_i=1}}\frac{\sum\limits_{h,\ell=1}^j\alpha_h\alpha_\ell\intOmega\nabla^m\Phi_h\nabla^m\Phi_\ell}{\sum\limits_{h,\ell=1}^j\alpha_h\alpha_\ell\intOmega\Phi_h\Phi_\ell}\\
			&\leq\max_{\substack{\left(\alpha_1,\dots,\alpha_j\right)\in\R^j\\\sum_{i=1}^j\alpha_i=1}}\frac{\sum\limits_{h=1}^j\alpha_h^2\lambda_h(\Omega)+\Oo\left(\left(\capVm(K)\right)^{1/2}\right)}{\sum\limits_{h=1}^j\alpha_h^2+\Oo\left(\left(\capVm(K)\right)^{1/2}\right)}\\
			&\leq\frac{\lambda_j(\Omega)+\Oo\left(\left(\capVm(K)\right)^{1/2}\right)}{1+\Oo\left(\left(\capVm(K)\right)^{1/2}\right)}=\lambda_j(\Omega)+\Oo\left(\left(\capVm(K)\right)^{1/2}\right)
		\end{split}
	\end{equation*}
    as $\capVm(K)\to0$.
\end{proof}

\subsection{Asymptotic expansion of eigenvalues: Proof of Theorem \ref{Prop_eigv_cap}}\label{Section_asympt}

Let $\{K_\varepsilon\}_{\varepsilon>0}$ be a family of compact subsets
of $\Omega$ and denote by $\lambda_J(\Omega\setminus K_\varepsilon)$
the $J$-th eigenvalue of $(-\Delta)^m$ in
$V^m_0(\Omega\setminus K_\varepsilon)$, i.e. of problem
\eqref{weak_sol_eigfct} with $K=K_\varepsilon$.
If there exists a
limiting set $K$ for which $\capVm(K_\varepsilon)\to\capVm(K)=0$,
Theorem \ref{Alternative_conv_eigv} and Proposition
\ref{Capacity_zero} guarantee that
$\lambda_J(\Omega\setminus K_\varepsilon)\to\lambda_J(\Omega\setminus
K)=\lambda_J(\Omega)$,
if we denote by $\lambda_J(\Omega\setminus K)$ the corresponding
eigenvalue of the limiting problem in
$V^m_0(\Omega\setminus K)=V^m_0(\Omega)$. Moreover, Theorem
\ref{Alternative_conv_eigv} gives us a first estimate on the
eigenvalue convergence rate
in terms of the $V^m$-capacity of the removed set $K_\varepsilon$. Inspired by \cite{AFHL}, we are now going to sharpen this result, by detecting the first term of the asymptotic expansion of $\lambda_J(\Omega\setminus K_\varepsilon)$, provided the family of compact sets $\{K_\varepsilon\}_{\varepsilon>0}$ converges to $K$ as specified in Definition \ref{def_conv}. Indeed, as the next two propositions show, this definition of convergence, although very general, is enough to prove the stability of the $(u,V^m)$-capacity in case $\capVm(K)=0$, as well as the Mosco convergence of the functional spaces. 

\begin{prop}\label{cap_pot_stability}
  Let $\{K_\varepsilon\}_{\varepsilon>0}$ be a family of compact sets contained in $\Omega\subset\R^N$ concentrating to a compact set $K\subset\Omega$ with $\capVm(K)=0$ as $\varepsilon\to0$. Then, for every function $u\in V^m(\Omega)$, one has that $W_{K_\varepsilon,u}\to W_{K,u}=0$ strongly in $V^m(\Omega)$ and $\capVm(K_\varepsilon,u)\to\capVm(K,u)=0$ as $\varepsilon\to0$.
\end{prop}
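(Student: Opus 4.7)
My plan is to recast the $(u, V^m)$-capacity as a seminorm distance. Via the substitution $f = u - \phi$, the admissibility constraint $f - u \in V^m_0(\Omega\setminus K_\varepsilon)$ becomes $\phi \in V^m_0(\Omega\setminus K_\varepsilon)$, so that
\begin{equation*}
  \capVm(K_\varepsilon, u) = \inf_{\phi \in V^m_0(\Omega\setminus K_\varepsilon)} \int_\Omega |\nabla^m(u - \phi)|^2.
\end{equation*}
The same identity with $K$ in place of $K_\varepsilon$, combined with the fact that the hypothesis $\capVm(K)=0$ gives $V^m_0(\Omega\setminus K)=V^m(\Omega)$ via Proposition \ref{Capacity_zero}, immediately yields $\capVm(K,u)=0$ (as $\phi=u$ is admissible) and $W_{K,u}=0$ (the unique vanishing minimizer, thanks to the norm equivalence \eqref{eq:equiv-norme}).

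For the asymptotic statement, the same proposition ensures $u\in V^m_0(\Omega\setminus K)$, so the density of $X^m_0(\Omega\setminus K)$ in $V^m_0(\Omega\setminus K)$ provides, for any $\delta>0$, a function $\phi_\delta \in X^m_0(\Omega\setminus K)$ with $\|\nabla^m(u-\phi_\delta)\|_2 < \delta$. By the very definition of $X^m_0(\Omega\setminus K)$ (in both the Dirichlet and Navier settings), $\phi_\delta$ vanishes in some open neighbourhood $U_\delta \subset \Omega$ of $K$. The crucial use of the concentration property (Definition \ref{def_conv}) is then that $K_\varepsilon \subset U_\delta$ for every $\varepsilon \in (0,\varepsilon_\delta)$ with $\varepsilon_\delta$ small enough; hence $\phi_\delta$ vanishes in a neighbourhood of $K_\varepsilon$ and therefore belongs to $X^m_0(\Omega\setminus K_\varepsilon)\subset V^m_0(\Omega\setminus K_\varepsilon)$. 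Testing the variational expression above with $\phi_\delta$ produces $\capVm(K_\varepsilon, u) \leq \|\nabla^m(u - \phi_\delta)\|_2^2 < \delta^2$ for every $\varepsilon \in (0,\varepsilon_\delta)$, which by arbitrariness of $\delta$ gives $\capVm(K_\varepsilon, u) \to 0$.

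The strong convergence $W_{K_\varepsilon, u} \to 0$ in $V^m(\Omega)$ is then a direct consequence: $\|\nabla^m W_{K_\varepsilon,u}\|_2^2 = \capVm(K_\varepsilon,u) \to 0$, and the norm equivalence \eqref{eq:equiv-norme} in $V^m(\Omega)$ lifts the control on the top-order seminorm to convergence in the full $H^m$-norm. The main point requiring care is the admissibility check for $\phi_\delta$ in the Navier case, since ad hoc truncations can destroy the stable boundary conditions on $\dOmega$; this is why I would work from the start with functions in $X^m_0(\Omega\setminus K)$, which carry the correct boundary behaviour on $\dOmega$ intrinsically and whose supports are automatically disjoint from $K_\varepsilon$ for $\varepsilon$ small by the concentration hypothesis, so that no further cut-off is needed.
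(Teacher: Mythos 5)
Your proof is correct and fills in the detail that the paper delegates to the reference \cite[Proposition B.1]{AFHL}: both rest on the same two ingredients, namely that $\capVm(K)=0$ forces $V^m_0(\Omega\setminus K)=V^m(\Omega)$ (Proposition \ref{Capacity_zero}), and the density of $X^m_0(\Omega\setminus K)$ in $V^m_0(\Omega\setminus K)$, whose elements vanish on a fixed neighbourhood of $K$ and hence, by Definition \ref{def_conv}, on a neighbourhood of $K_\varepsilon$ for small $\varepsilon$. The recasting of $\capVm(K_\varepsilon,u)$ as $\inf_{\phi\in V^m_0(\Omega\setminus K_\varepsilon)}\|\nabla^m(u-\phi)\|_2^2$ is a clean way to organize the approximation step, and your remark about why one must work with $X^m_0$ rather than ad hoc truncations in the Navier case is precisely the right caution, since a generic cut-off would not respect the boundary conditions on $\partial\Omega$.
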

\begin{proof}
	It is analogous to the one for the case $m=1$ given in \cite[Proposition B.1]{AFHL}. It is in fact essentially based on the fact that $V^m_0(\Omega\setminus K)=V^m(\Omega)$ for sets of null $V^m$-capacity, as shown in Proposition \ref{Capacity_zero}, and on the consequent Remark \ref{equiv_cap_0}.
\end{proof}

\begin{defn}
  Let $\{K_\varepsilon\}_{\varepsilon>0}$ be a family of compact sets
  compactly contained in a bounded domain $\Omega$. We say that
  $\Omega\setminus K_\varepsilon$ converges to $\Omega\setminus K$
  \textit{in the sense of Mosco} in $V^m_0$ if the following two
  conditions are satisfied:
	\begin{enumerate}
		\item[(\textit{i})] the weak limit points in $V^m(\Omega)$ of every family of functions $u_\varepsilon\in V^m_0(\Omega\setminus K_\varepsilon)$ belong to $V^m_0(\Omega\setminus K)$;
		\item[(\textit{ii})] for every $u\in V^m_0(\Omega\setminus K)$, there exists a family of functions $\{u_\varepsilon\}_{\varepsilon>0}$ such that, for every $\varepsilon>0$, $u_\varepsilon\in V^m_0(\Omega\setminus K_\varepsilon)$ and $u_\varepsilon\to u$ in $V^m(\Omega)$.
	\end{enumerate}
\end{defn}
In order to stress the underlined functional space, we also say that $V^m_0(\Omega\setminus K_\varepsilon)$ converges to $V^m_0(\Omega\setminus K)$ in the sense of Mosco.

\begin{lemma}\label{Mosco}
	Let $\{K_\varepsilon\}_{\varepsilon>0}$ be a family of compact sets concentrating to a compact set $K\subset\Omega$ with $\capVm(K)=0$ as $\varepsilon\to0$. Then $V^m_0(\Omega\setminus K_\varepsilon)$ converges to $V^m_0(\Omega\setminus K)$ as $\varepsilon\to0$ in the sense of Mosco.
\end{lemma}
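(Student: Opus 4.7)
The plan is to leverage the assumption $\capVm(K)=0$ to reduce the Mosco convergence to a simple consequence of the capacitary potential stability already established in Proposition \ref{cap_pot_stability}. The first observation is that, by Proposition \ref{Capacity_zero}, the hypothesis $\capVm(K)=0$ gives the identification
\[
V^m_0(\Omega\setminus K)=V^m(\Omega),
\]
so the target space in both (i) and (ii) is just $V^m(\Omega)$.

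For condition (i), I would argue as follows: any family $\{u_\varepsilon\}_{\varepsilon>0}$ with $u_\varepsilon\in V^m_0(\Omega\setminus K_\varepsilon)$ is automatically contained in $V^m(\Omega)$ by the inclusion \eqref{inclusion} (extending functions by $0$ on $K_\varepsilon$). Since $V^m(\Omega)$ is a closed subspace of $H^m(\Omega)$, any weak limit point of $\{u_\varepsilon\}$ in $V^m(\Omega)$ is still an element of $V^m(\Omega)=V^m_0(\Omega\setminus K)$, which is exactly what is needed.

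For condition (ii), given $u\in V^m_0(\Omega\setminus K)=V^m(\Omega)$, the natural recovery family is
\[
u_\varepsilon:=u-W_{K_\varepsilon,u},
\]
where $W_{K_\varepsilon,u}$ is the $(u,V^m)$-capacitary potential of $K_\varepsilon$ introduced in Section \ref{Sec_cap_Omega}. By definition of the capacitary potential (see \eqref{W_ku_pb}) one has $W_{K_\varepsilon,u}-u\in V^m_0(\Omega\setminus K_\varepsilon)$, hence $u_\varepsilon\in V^m_0(\Omega\setminus K_\varepsilon)$ for every $\varepsilon>0$. Moreover, since $K_\varepsilon$ concentrates to $K$ and $\capVm(K)=0$, Proposition \ref{cap_pot_stability} yields $W_{K_\varepsilon,u}\to 0$ strongly in $V^m(\Omega)$, and therefore $u_\varepsilon\to u$ in $V^m(\Omega)$, as required.

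There is no real obstacle here once the prior results are in place: the content of the lemma is essentially repackaging Proposition \ref{cap_pot_stability} (which in turn distills the passage from pointwise capacity vanishing to functional-space identification of Proposition \ref{Capacity_zero}). The only point demanding a brief justification is the fact that $V^m_0(\Omega\setminus K_\varepsilon)\subset V^m(\Omega)$, which follows immediately from the extension-by-zero across $K_\varepsilon$ that is allowed by the homogeneous Dirichlet conditions on $\partial K_\varepsilon$, as recorded in \eqref{inclusion}.
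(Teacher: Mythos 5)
Your proof is correct, and for condition~(\emph{ii}) it takes a genuinely cleaner route than the paper. Condition~(\emph{i}) is handled identically in both (weak closedness of the subspace $V^m(\Omega)$ together with $V^m(\Omega)=V^m_0(\Omega\setminus K)$ from Proposition~\ref{Capacity_zero}). For (\emph{ii}), however, the paper does not invoke the capacitary potential $W_{K_\varepsilon,u}$ directly: it first approximates $u$ by a smooth $\chi_k\in X^m_0(\Omega\setminus K)$, then uses the potential $W_K=0$ to produce smooth functions $W_n$ with $\|\nabla^m W_n\|_2<1/n$ and $W_n\equiv 1$ near $K_\varepsilon$ for small $\varepsilon$, sets $Z^k_n:=\chi_k(1-\eta_K W_n)$, and finally runs a diagonal argument over $k$ and $n$. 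Your choice $u_\varepsilon:=u-W_{K_\varepsilon,u}$ sidesteps all of this: it lies in $V^m_0(\Omega\setminus K_\varepsilon)$ by the very definition \eqref{W_ku_pb}, and Proposition~\ref{cap_pot_stability} gives $W_{K_\varepsilon,u}\to 0$ strongly, hence $u_\varepsilon\to u$ in $V^m(\Omega)$. The trade-off is one of self-containedness: the paper's proof is elementary given only Proposition~\ref{Capacity_zero} and density, whereas yours leans on Proposition~\ref{cap_pot_stability}, whose proof the paper delegates to \cite[Proposition~B.1]{AFHL}. Since that proposition is stated and established prior to Lemma~\ref{Mosco} and its proof is declared to rest on Proposition~\ref{Capacity_zero} rather than on Mosco convergence, there is no circularity, and your shortcut is legitimate.
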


\begin{proof}
  Verification of (\textit{i}). Let
  $\{u_\varepsilon\}_\varepsilon\subset V^m(\Omega)$ be such that
  $u_\varepsilon\in V^m_0(\Omega\setminus K_\varepsilon)$ and
  $u_\varepsilon\rightharpoonup u$ in $V^m(\Omega)$. Since
  $\capp_{m,\Omega}(K)=0$, we have that
    $V^m(\Omega)=V^m_0(\Omega\setminus K)$ by Proposition
    \ref{Capacity_zero}, hence $u$ belongs to
  $V^m_0(\Omega\setminus K)$.
	
  Verification of (\textit{ii}). Let
  $u\in V^m_0(\Omega\setminus K)=V^m(\Omega)$.
For every $k\in\N\setminus\{0\}$, by density  there exists 
  $\chi_k\in X^m_0(\Omega\setminus K)$ such that
  $\|\nabla^m(\chi_k-u)\|_2<\frac1k$. Note that, if
  $K_\varepsilon$ is concentrating to $K$ in the sense of Definition
  \ref{def_conv}, for a chosen cutoff function
  $\eta_K\in C^\infty_0(\Omega)$ such that $\eta_K\equiv1$ in a
  neighbourhood of $K$, one
  has that $\eta_K\equiv1$ in a
  neighbourhood of $K_\varepsilon$
  for $\varepsilon$
  small enough. 
  By definition of $W_K$, one may find $(W_n)_n\subset V^m(\Omega)$
  and a sequence $(\varepsilon_n)_n\searrow0$
  such that  $\|\nabla^mW_n\|_2<\frac1n$ and $W_n\equiv1$ in a
    neighbourhood of $K_\varepsilon$ for all $\varepsilon\in
    (0,\varepsilon_n]$. Defining,
  for all $n,k\in\N\setminus\{0\}$,
    $Z_n^k:=\chi_k\left(1-\eta_KW_n\right)$, one has that
  $Z_n^k\in V^m_0(\Omega\setminus K_\varepsilon)$ for all
  $\varepsilon\in(0,\varepsilon_n]$ and
	\begin{equation*}
			\|\nabla^m\left(Z_n^k-\chi_k\right)\|_2
			\les \|\eta_K\|_{W^{m,\infty}(\Omega)}\|W_n\|_{V^m(\Omega)}\|\chi_k\|_{W^{m,\infty}(\Omega)}\leq\frac {C_k}n
	\end{equation*}
for some $C_k>0$ depending on $k$.
        Hence, for each $k\in \N\setminus\{0\}$, there exists
        $n_k\in\N$ such that $n_k\nearrow\infty$ as $k\to\infty$ and
        $\|\nabla^m\left(Z_{n_k}^k-\chi_k\right)\|_2<\frac1k$. In
      order to construct the family required for the Mosco
      convergence, for any
      $\varepsilon\in(0,\varepsilon_{n_1})$ it is sufficient to
      define $u_\varepsilon:=Z_{n_k}^k$, choosing $k$ such that
      $\varepsilon\in(\varepsilon_{n_{k+1}},\varepsilon_{n_k}]$. Indeed,
      for any $\delta>0$, letting $k\in\N\setminus\{0\}$ be such
        that $\frac2k<\delta$, we have that, for all $\varepsilon\in
        (0,\varepsilon_{n_k}]$, $u_\varepsilon=Z_{n_j}^j$ for some
        $j\geq k$, so that
	$$\|\nabla^m\left(u_\varepsilon-u\right)\|_2\leq\|\nabla^m\big(Z_{n_j}^j
        -\chi_j\big)\|_2+\|\nabla^m\left(\chi_j-u\right)\|_2<\frac2j\leq\frac2k<\delta,$$
thus proving that $u_\varepsilon \to u$ in $V^m(\Omega)$ as $\varepsilon\to0$.      
\end{proof}

\begin{remark}
	Note that the Mosco convergence of sets implies the convergence of the spectra of the polyharmonic operators, see \cite{AL}. For the Dirichlet case, in particular this can be seen combining \cite[Proposition 2.9, footnote 2 p.8, and Theorem 4.3]{AL}.
\end{remark}

\begin{lemma}\label{Lemma_A1_adapt}
  Let $K\subset\Omega$ be a compact set and
  $\{K_\varepsilon\}_{\varepsilon>0}$ be a family of compact subsets
  of $\Omega$ concentrating to $K$ as $\varepsilon\to0$. If $\capVm(K)=0$, then, for every
  $f\in V^m(\Omega)$, we have that 
  $\|W_{K_\varepsilon,f}\|_{H^{m-1}(\Omega)}^2={\scriptstyle
    \mathcal{O}}(\capVm(K_\varepsilon,f))$ as $\varepsilon\to0$.
\end{lemma}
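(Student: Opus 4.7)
The plan is to argue by contradiction, exploiting compactness through the Rellich--Kondrachov theorem together with the Mosco convergence of Lemma~\ref{Mosco}. Write $W_n:=W_{K_{\varepsilon_n},f}$ and $c_n:=\capVm(K_{\varepsilon_n},f)=\|\nabla^m W_n\|_2^2$, and recall from Proposition~\ref{cap_pot_stability} that $c_n\to0$ and $W_n\to 0$ strongly in $V^m(\Omega)$. If the claim fails, there will exist $c_0>0$ and a subsequence (not relabeled) along which $\|W_n\|_{H^{m-1}(\Omega)}^2\geq c_0\,c_n$. Necessarily $c_n>0$ along this subsequence (otherwise $W_n\equiv 0$ and both sides vanish), so the normalized functions $V_n:=W_n/\sqrt{c_n}\in V^m(\Omega)$ are well defined and satisfy $\|\nabla^m V_n\|_2=1$ together with $\|V_n\|_{H^{m-1}(\Omega)}\geq\sqrt{c_0}$.

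By the equivalence of norms \eqref{eq:equiv-norme}, the sequence $\{V_n\}$ will be bounded in $V^m(\Omega)\subset H^m(\Omega)$, and the Rellich--Kondrachov theorem will produce, up to extracting a further subsequence, a function $V\in V^m(\Omega)$ with $V_n\rightharpoonup V$ weakly in $V^m(\Omega)$ and $V_n\to V$ strongly in $H^{m-1}(\Omega)$. The strong convergence will in particular force $\|V\|_{H^{m-1}(\Omega)}\geq\sqrt{c_0}>0$, so $V\not\equiv 0$.

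The core of the argument is then to identify $V$ as the null function, which will yield the desired contradiction. Since each $W_n$ solves \eqref{W_ku_pb}, it is $q_m$-orthogonal to $V^m_0(\Omega\setminus K_{\varepsilon_n})$; after dividing by $\sqrt{c_n}$, the same will hold for $V_n$, i.e. $\int_\Omega\nabla^m V_n\,\nabla^m\varphi=0$ for every $\varphi\in V^m_0(\Omega\setminus K_{\varepsilon_n})$. Given an arbitrary test function $\psi\in V^m(\Omega)$, the hypothesis $\capVm(K)=0$ combined with Proposition~\ref{Capacity_zero} gives $V^m_0(\Omega\setminus K)=V^m(\Omega)$, and Lemma~\ref{Mosco} will then provide $\psi_n\in V^m_0(\Omega\setminus K_{\varepsilon_n})$ with $\psi_n\to\psi$ strongly in $V^m(\Omega)$. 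Inserting $\psi_n$ in the orthogonality relation and letting $n\to+\infty$ via a standard weak--strong pairing will yield $\int_\Omega\nabla^m V\,\nabla^m\psi=0$ for every $\psi\in V^m(\Omega)$. Choosing $\psi=V$ and invoking \eqref{eq:equiv-norme} once more will then force $V\equiv 0$, contradicting $V\not\equiv 0$.

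The delicate point will be precisely the passage to the limit in the orthogonality relation: the test space $V^m_0(\Omega\setminus K_{\varepsilon_n})$ varies with $n$, so a fixed test function cannot be used directly but must be approximated by admissible ones through Lemma~\ref{Mosco}. This is where the hypothesis $\capVm(K)=0$ plays its decisive role, since it guarantees that the limiting test space coincides with the whole $V^m(\Omega)$ and is therefore rich enough to force the weak limit $V$ to vanish.
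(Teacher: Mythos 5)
Your proof is correct and follows essentially the same contradiction-plus-compactness-plus-Mosco argument as the paper; the only cosmetic difference is the normalization (you divide by $\sqrt{\capVm(K_{\varepsilon_n},f)}=\|\nabla^m W_n\|_2$ so that $\|\nabla^m V_n\|_2=1$ and $\|V_n\|_{H^{m-1}}\geq\sqrt{c_0}$, whereas the paper divides by $\|W_n\|_{H^{m-1}(\Omega)}$ so that $\|Z_n\|_{H^{m-1}}=1$ and $\|\nabla^m Z_n\|_2\leq 1/\sqrt{C}$), which does not change the argument.
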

\begin{proof}
	The proof is inspired by \cite[Lemma A.1]{AFHL}. 
	Suppose by contradiction that there exist $C>0$ and a sequence $\varepsilon_n\to0$ such that
	\begin{equation}\label{Lemma_A1_adapt_contrad}
	\|W_{K_{\varepsilon_n},f}\|_{H^{m-1}(\Omega)}^2\geq C\,\capVm(K_{\varepsilon_n},f)\quad\text{for all } n.
	\end{equation}
	Let us consider
	$$Z_n:=\frac{W_{K_{\varepsilon_n},f}}{\|W_{K_{\varepsilon_n},f}\|_{H^{m-1}(\Omega)}}.$$
	We have
	\begin{equation*}
		\|Z_n\|_{H^{m-1}(\Omega)}=1\qquad\mbox{and}\qquad\|\nabla^mZ_n\|_2^2=\frac{\|\nabla^mW_{K_{\varepsilon_n},f}\|_2^2}{\|W_{K_{\varepsilon_n},f}\|^2_{H^{m-1}(\Omega)}}\leq\frac 1C
	\end{equation*}
	with $C>0$ as in \eqref{Lemma_A1_adapt_contrad}. 
	Then one may find a subsequence (still denoted by $Z_n$) and
        $Z\in V^m(\Omega)$, so that $Z_n\rightharpoonup Z$ in
        $V^m(\Omega)$. By the compact
        embedding $H^m(\Omega)\hookrightarrow\hookrightarrow
        H^{m-1}(\Omega)$, $Z$ is also the strong limit in the
        $H^{m-1}(\Omega)$ topology. This implies that
        $\|Z\|_{H^{m-1}(\Omega)}=1$. However, by the Mosco
        convergence of Lemma \ref{Mosco} one may show that
	\begin{equation}\label{Z=0}
		\int_{\Omega\setminus K}\nabla^mZ\,\nabla^m\varphi=0\qquad\mbox{for all}\;\varphi\in V^m_0(\Omega\setminus K),
              \end{equation}
              and hence for all $\varphi\in V^m(\Omega)$ 
	by Proposition \ref{Capacity_zero}, since we assumed
        $\capVm(K)=0$. Indeed, given $\varphi\in V^m_0(\Omega\setminus
        K)$ there exists a sequence
        $\{\varphi_{\varepsilon_n}\}_n$ so that
        $\varphi_{\varepsilon_n}\in V^m_0(\Omega\setminus
        K_{\varepsilon_n})$ for each $n\in\N$ and $\varphi_{\varepsilon_n}\to \varphi$ in $V^m(\Omega)$, for which then
	\begin{equation*}
		\int_{\Omega\setminus K_{\varepsilon_n}}\nabla^mZ_n\,\nabla^m\varphi_{\varepsilon_n}=0
	\end{equation*}
	for all $n\in\N$ by definition of $Z_n$ as a multiple of the capacitary potential $W_{K_{\varepsilon_n},f}$. Then, \eqref{Z=0} follows by weak-strong convergence in $V^m(\Omega)$, yielding $Z=0$, a contradiction.
\end{proof}

We are now in the position to prove the asymptotic expansion of the
perturbed eigenvalues. The suitable asymptotic parameter turns out to
be the $(u_J,V^m)$-capacity of the removed set, where
$u_J$ is an eigenfunction normalized in $L^2(\Omega)$ associated to the eigenvalue $\lambda_J$.

In the following, $(-\Delta)^m_\varepsilon$ stands for the polyharmonic operator acting on $V^m_0(\Omega\setminus K_\varepsilon)$. Similarly, to shorten notation, we write $\lambda_\varepsilon:=\lambda_J(\Omega\setminus K_\varepsilon)$ and the corresponding $(u_J,V^m)$-capacitary potential is denoted by $W_\varepsilon:=W_{K_\varepsilon,u_J}\in V^m(\Omega)$; we also write $\lambda_J:=\lambda_J(\Omega)$.

\begin{proof}[Proof of Theorem \ref{Prop_eigv_cap}]
	  First note that the simplicity of $\lambda_J$,
    i.e. of $\lambda_J(\Omega\setminus K)$ by Proposition
    \ref{Capacity_zero}, together with the convergence of the
    perturbed eigenvalues given by Theorem \ref{Alternative_conv_eigv},
    implies the simplicity of $\lambda_\varepsilon$ for $\varepsilon$
    sufficiently
    small.
  
	Let $\psi_\varepsilon:=u_J-W_\varepsilon\in V^m_0(\Omega\setminus K_\varepsilon)$ and $\varphi\in V^m_0(\Omega\setminus K_\varepsilon)$. Then
	\begin{equation*}
	\intOmega\nabla^m\psi_\varepsilon\nabla^m\varphi-\lambda_J\intOmega\psi_\varepsilon\varphi=\int_{\Omega\setminus K_\varepsilon}\!\nabla^m u_J\nabla^m\varphi-\lambda_J\intOmega\psi_\varepsilon\varphi=\lambda_J\intOmega W_\varepsilon\varphi.
	\end{equation*}
	This means that $\psi_\varepsilon$ satisfies weakly in
        $V^m_0(\Omega\setminus K_\varepsilon)$ the equation
	\begin{equation}\label{Prop_eigv_cap_eq}
          \left((-\Delta)^m-\lambda_J\right)\psi_\varepsilon=\lambda_JW_\varepsilon.
	\end{equation}
	Since by Lemma \ref{Lemma_A1_adapt} with $f=u_J$ one has
        $\|W_\varepsilon\|_2={\scriptstyle
          \mathcal{O}}(\capVm(K_\varepsilon,u_J)^{1/2})$ as $\varepsilon\to0$, we infer
	\begin{equation*}
	\dist(\lambda_J,\sigma((-\Delta)^m_\varepsilon))\leq\frac{\|((-\Delta)^m-\lambda_J)\psi_\varepsilon\|_2}{\|\psi_\varepsilon\|_2}={\scriptstyle \mathcal{O}}(\capVm(K_\varepsilon,u_J)^{1/2})
	\end{equation*}
        as $\varepsilon\to0$.   Since we know by Theorem
        \ref{Alternative_conv_eigv} and Proposition
        \ref{cap_pot_stability} that the spectrum of $(-\Delta)^m$ in
        $V^m_0(\Omega\setminus K_\varepsilon)$ varies continuously
        with respect to $\varepsilon$ and that the eigenvalue
        $\lambda_\varepsilon$ is simple for $\varepsilon$ small
        enough, one first deduces
        $$|\lambda_\varepsilon-\lambda_J|={\scriptstyle
          \mathcal{O}}(\capVm(K_\varepsilon,u_J)^{1/2})\quad\text{as
        }\varepsilon\to0.$$ Denote now by $\Pi_\varepsilon$ the
        projector (with respect to the scalar product in $L^2$) onto
        the eigenspace related to $\lambda_\varepsilon$ and take
        $u_\varepsilon:=\frac{\Pi_\varepsilon\psi_\varepsilon}{\|\Pi_\varepsilon\psi_\varepsilon\|_2}$
        as normalized eigenfunction. The first goal is to estimate the
        difference of the two eigenfunctions $u_J$ and
        $u_\varepsilon$:
	\begin{equation*}
	\begin{split}
	\|u_J-u_\varepsilon\|_2&\leq\|u_J-\psi_\varepsilon\|_2+\|\psi_\varepsilon-\Pi_\varepsilon\psi_\varepsilon\|_2+\left\|\Pi_\varepsilon\psi_\varepsilon-\frac{\Pi_\varepsilon\psi_\varepsilon}{\|\Pi_\varepsilon\psi_\varepsilon\|_2}\right\|_2\\
	&=\|W_\varepsilon\|_2+\|\psi_\varepsilon-\Pi_\varepsilon\psi_\varepsilon\|_2+\left|1-\|\Pi_\varepsilon\psi_\varepsilon\|_2^{-1}\right|\|\Pi_\varepsilon\psi_\varepsilon\|_2.
	\end{split}
	\end{equation*}
	Note that Lemma \ref{Lemma_A1_adapt} yields $\|W_\varepsilon\|_2={\scriptstyle \mathcal{O}}(\capVm(K_\varepsilon,u_J)^{1/2})$ and, moreover, we have
	$$\|\Pi_\varepsilon\psi_\varepsilon\|_2\leq\|\psi_\varepsilon\|_2\leq\|u_J\|_2+\|W_\varepsilon\|_2=\Oo(1)\quad\text{as
        }\varepsilon\to0.$$
	Hence, we need to estimate $\|\psi_\varepsilon-\Pi_\varepsilon\psi_\varepsilon\|_2$ and $\left|1-\|\Pi_\varepsilon\psi_\varepsilon\|_2^{-1}\right|$. We claim that both quantities are ${\scriptstyle \mathcal{O}}(\capVm(K_\varepsilon,u_J)^{1/2})$, obtaining thus
	\begin{equation}\label{uJ-ueps}
	\|u_J-u_\varepsilon\|_2={\scriptstyle
          \mathcal{O}}(\capVm(K_\varepsilon,u_J)^{1/2})\quad \text{as
        }\varepsilon\to0,
	\end{equation}
	and postpone the proof of such claim to the end of the
        proof. Then we have
	\begin{equation*}
	\begin{split}
	\capVm(K_\varepsilon,u_J)&=\intOmega|\nabla^m W_\varepsilon|^2=\intOmega\nabla^m(u_J-\psi_\varepsilon)\nabla^m W_\varepsilon=\intOmega\nabla^m u_J\nabla^m W_\varepsilon\\
	&=\lambda_J\intOmega u_JW_\varepsilon=\lambda_J\intOmega u_\varepsilon W_\varepsilon+\lambda_J\intOmega(u_J-u_\varepsilon)W_\varepsilon\\
	&\stackrel{\eqref{Prop_eigv_cap_eq}}{=}\intOmega\nabla^m\psi_\varepsilon\nabla^m u_\varepsilon-\lambda_J\intOmega u_\varepsilon\psi_\varepsilon+\lambda_J\intOmega(u_J-u_\varepsilon)W_\varepsilon\\
	&=\left(\lambda_\varepsilon-\lambda_J\right)\intOmega u_\varepsilon\psi_\varepsilon+\lambda_J\intOmega(u_J-u_\varepsilon)W_\varepsilon,
	\end{split}
	\end{equation*}
	and therefore
	\begin{equation}\label{lambdaJ-lambdaeps}
	\left(\lambda_\varepsilon-\lambda_J\right)\intOmega u_\varepsilon\psi_\varepsilon=\capVm(K_\varepsilon,u_J)-\lambda_J\intOmega\left(u_J-u_\varepsilon\right)W_\varepsilon.
	\end{equation}
	Since now
	\begin{equation*}
	\intOmega u_\varepsilon\psi_\varepsilon=\|u_\varepsilon\|_2^2+\intOmega u_\varepsilon\left(\psi_\varepsilon-u_\varepsilon\right)=1+\intOmega u_\varepsilon\left(\psi_\varepsilon-u_\varepsilon\right)
	\end{equation*}
	and
	\begin{equation*}
	\left|\intOmega u_\varepsilon\left(\psi_\varepsilon-u_\varepsilon\right)\right|\leq\|u_\varepsilon\|_2\|\psi_\varepsilon-u_\varepsilon\|_2={\scriptstyle \mathcal{O}}(\capVm(K_\varepsilon,u_J)^{1/2}),
	\end{equation*}
	where the last equality is again due to the claims above, from
        \eqref{lambdaJ-lambdaeps} and \eqref{uJ-ueps}, we infer
	\begin{equation*}
	\lambda_\varepsilon-\lambda_J=\frac{\capVm(K_\varepsilon,u_J)+{\scriptstyle \mathcal{O}}(\capVm(K_\varepsilon,u_J))}{1+{\scriptstyle \mathcal{O}}(1)}=\capVm(K_\varepsilon,u_J)\left(1+{\scriptstyle \mathcal{O}}(1)\right)
	\end{equation*}
        as $\varepsilon\to0$, as desired. To conclude, we prove the
        claims above. Since $\lambda_\varepsilon$ is a simple
        eigenvalue, denoting by $T_\varepsilon$ the restriction of
        $(-\Delta)^m_\varepsilon$ on $\ker\Pi_\varepsilon$, we have 
        that
        $\sigma(T_\varepsilon)=\sigma((-\Delta)^m_\varepsilon)\setminus\{\lambda_\varepsilon\}$
        and, by simplicity, 
        $\dist(\lambda_\varepsilon,\sigma(T_\varepsilon))\geq\delta$
for some $\delta>0$, uniformly with respect to  $\varepsilon$. Hence,
	\begin{equation*}
		\begin{split}
		\|\psi_\varepsilon-\Pi_\varepsilon\psi_\varepsilon\|_2&\leq\frac1\delta\left\|\left(T_\varepsilon-\lambda_\varepsilon\right)\left(\psi_\varepsilon-\Pi_\varepsilon\psi_\varepsilon\right)\right\|_2\les\|\left((-\Delta)^m-\lambda_\varepsilon\right)\psi_\varepsilon\|_2\\
		&\leq\|\left((-\Delta)^m-\lambda_J\right)\psi_\varepsilon\|_2+|\lambda_J-\lambda_\varepsilon|\|\psi_\varepsilon\|_2=|\lambda_J|\|W_\varepsilon\|_2+|\lambda_J-\lambda_\varepsilon|\|\psi_\varepsilon\|_2\\
		&={\scriptstyle \mathcal{O}}(\capVm(K_\varepsilon,u_J)^{1/2}).
		\end{split}
	\end{equation*}
	Since, by definition of $\psi_\varepsilon$ and Lemma
          \ref{Lemma_A1_adapt},
          $\|\psi_\varepsilon\|_2=1+{\scriptstyle
            \mathcal{O}(\capVm(K_\varepsilon,u_J)^{1/2})}$ as $\varepsilon\to0$,
          one thus finds that 
          $\|\Pi_\varepsilon\psi_\varepsilon\|_2=1+{\scriptstyle
            \mathcal{O}}(\capVm(K_\varepsilon,u_J)^{1/2})$, which in
          particular yields the desired estimate 
          $1-\|\Pi_\varepsilon\psi_\varepsilon\|_2^{-1}={\scriptstyle
            \mathcal{O}}(\capVm(K_\varepsilon,u_J)^{1/2})$. This concludes the proof.
\end{proof}

\begin{remark}
We observe that,	 in the proof of Theorem \ref{Prop_eigv_cap},
        the following estimate  for the normalized eigenfunction $u_\varepsilon\in
        V^m_0(\Omega\setminus K_\varepsilon)$
        of $(-\Delta)^m$ relative to $\lambda_J(\Omega\setminus
        K_\varepsilon)$  was established:
	$$\|u_\varepsilon-u_J\|_2={\scriptstyle
          \mathcal{O}}(\capVm(K_\varepsilon,u_J)^{1/2})\quad\text{as }\varepsilon\to0.$$
\end{remark}

\section{Sharp asymptotic expansions of perturbed eigenvalues: the
  case of uniformly shrinking holes.}
\subsection{A blow-up analysis.}\label{Sec_blowup}
In Theorem \ref{Prop_eigv_cap} we obtained an asymptotic expansion of
a perturbed simple eigenvalue in terms of $\capVm(K_\varepsilon,u_J)$,
in case the limiting removed set has zero $V^m$-capacity. However,
in view of possible applications, the dependence on the removed
set $K_\varepsilon$ is quite implicit using such an asymptotic
parameter. Therefore, we aim to understand how this quantity behaves
with respect to the diameter of the hole, in the case of a uniformly shrinking
family of compact sets which concentrate to a point, a set with
zero $V^m$-capacity in large dimensions by Proposition \ref{Capacity_point}.
\vskip0.2truecm First, we only suppose that
$\{K_\varepsilon\}_{\varepsilon>0}$ uniformly shrinks to a point,
which is assumed to be $0$ in the following, in the sense that
\begin{equation}\label{familiy_domain_unif_shr}
	K_\varepsilon\subset\overline{B_{C\varepsilon}(0)}
\end{equation}
for some constant $C>0$ and $\varepsilon$ small enough. The following
is a generalization of \cite[Lemma 2.2]{AFHL} to the higher-order
setting.
\begin{prop}\label{lem_shrinking}
  Let $\Omega\subset\R^N$ be a smooth bounded domain such that
  $0\in\Omega$ and let $\{K_\varepsilon\}_{\varepsilon>0}$ be a family
  of compact sets satisfying \eqref{familiy_domain_unif_shr}. Let
  $h\in H^m(\Omega)$ be such that $|D^kh(x)|=\Oo(|x|^{\gamma-k})$ as
  $|x|\to0$ for some $\gamma\in\N$ and all $k\in\{0,\dots,m\}$. Then
	\begin{equation}\label{lem_shrinking_asympt}
		\capVm(K_\varepsilon,h)=\Oo(\varepsilon^{N-2m+2\gamma})\quad\text{as
                $\varepsilon\to0$}.
	\end{equation}
\end{prop}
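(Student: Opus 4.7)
The plan is to construct an explicit admissible competitor for the minimization problem defining $\capVm(K_\varepsilon,h)$ and to estimate its Dirichlet energy by combining the scaling of a cutoff with the pointwise decay hypothesis on the derivatives of $h$. In view of the equality $\capVm(K,h)=\capVm(K,\eta_Kh)$ recalled after \eqref{capVmu}, I may assume throughout that $h$ has compact support inside $\Omega$, hence in particular $h\in V^m(\Omega)$ for both the Dirichlet and the Navier choice of $V^m$.

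First I would fix a cutoff $\eta\in C^\infty_0(\R^N)$ with $\eta\equiv 1$ on $\overline{B_2(0)}$ and $\supp\,\eta\subset B_3(0)$, and define the rescaled cutoff $\eta_\varepsilon(x):=\eta(x/(C\varepsilon))$, with $C$ the constant in \eqref{familiy_domain_unif_shr}. Then $\eta_\varepsilon\equiv 1$ on the neighborhood $B_{2C\varepsilon}(0)$ of $K_\varepsilon$, $\supp\,\eta_\varepsilon\subset B_{3C\varepsilon}(0)\subset\Omega$ for $\varepsilon$ small, and $\|D^k\eta_\varepsilon\|_\infty\les\varepsilon^{-k}$ for all $k\in\{0,\dots,m\}$. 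The natural competitor is
\begin{equation*}
  f_\varepsilon:=\eta_\varepsilon h\in V^m(\Omega).
\end{equation*}
The difference $f_\varepsilon-h=(\eta_\varepsilon-1)h$ vanishes identically on $B_{2C\varepsilon}(0)\supset K_\varepsilon$ and belongs to $V^m(\Omega)$; a standard truncation/mollification argument (in the Dirichlet case relying on $H^m_0$-approximations, in the Navier case on the fact that $(\eta_\varepsilon-1)h=-h$ in a neighborhood of $\dOmega$ and hence inherits the Navier boundary conditions from $h$) then places it in $V^m_0(\Omega\setminus K_\varepsilon)$. Hence $f_\varepsilon$ is admissible.

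Next I would bound $\|\nabla^m f_\varepsilon\|_2^2$ by means of the Leibniz formula together with the equivalence between $\|\nabla^m\cdot\|_2$ and the full tensor norm $\|D^m\cdot\|_2$ on compactly supported functions. This yields
\begin{equation*}
  \capVm(K_\varepsilon,h)\leq\int_\Omega|\nabla^m f_\varepsilon|^2\les\sum_{k=0}^{m}\int_{B_{3C\varepsilon}(0)}|D^k\eta_\varepsilon|^2\,|D^{m-k}h|^2\,dx.
\end{equation*}
Using the growth assumption $|D^{m-k}h(x)|\les |x|^{\gamma-(m-k)}$ near $0$ and passing to polar coordinates, each summand is controlled as
\begin{equation*}
  \varepsilon^{-2k}\int_0^{3C\varepsilon}r^{2(\gamma-m+k)+N-1}\,dr\les\varepsilon^{-2k}\cdot\varepsilon^{2(\gamma-m+k)+N}=\varepsilon^{N-2m+2\gamma},
\end{equation*}
where the radial integral converges at $0$ because $N\geq 2m$ and $\gamma\geq 0$ force $2(\gamma-m+k)+N\geq 0$, with the borderline cases handled by the supplementary $\log$-factor, which does not affect the stated order. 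Summing over $k\in\{0,\dots,m\}$ gives precisely \eqref{lem_shrinking_asympt}.

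The routine part is the Leibniz-plus-scaling computation; the only genuinely delicate point is the verification that $(\eta_\varepsilon-1)h\in V^m_0(\Omega\setminus K_\varepsilon)$, which in the Navier setting is not immediate because of the non-extendibility by zero outside $\Omega$. This is however resolved by the observation above, that away from $K_\varepsilon$ the competitor coincides, up to sign, with $h$ itself, so no new boundary behavior on $\dOmega$ is introduced.
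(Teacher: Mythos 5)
Your construction is essentially the paper's: both proofs build the competitor by multiplying $h$ by a rescaled cutoff equal to $1$ near $K_\varepsilon$, expand $\nabla^m$ by Leibniz, and balance the scaling of the cutoff against the hypothesis $|D^k h|=\Oo(|x|^{\gamma-k})$ to extract the factor $\varepsilon^{N-2m+2\gamma}$. The only structural difference is how the Navier case is dispatched: the paper first invokes the monotonicity $\capmND(\cdot,\cdot)\le\capm(\cdot,\cdot)$ from Proposition \ref{Capacity_monotonicity}(iii) and works purely in the Dirichlet setting, whereas you reduce to compactly supported $h$ via $\capVm(K,h)=\capVm(K,\eta_K h)$ and then verify admissibility directly in both settings; both routes are valid, and yours is arguably a touch more self-contained, though the paper's one-line reduction is cleaner. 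Two small imprecisions in your write-up, neither fatal: (a) once you assume $h$ compactly supported, the remark that $(\eta_\varepsilon-1)h=-h$ near $\dOmega$ ``inherits the Navier BCs'' is vacuous (both sides are $0$ there); the cleaner observation is that $(\eta_\varepsilon-1)h$ is an $H^m$ function compactly supported in $\Omega\setminus K_\varepsilon$, hence in $H^m_0(\Omega\setminus K_\varepsilon)\subset V^m_0(\Omega\setminus K_\varepsilon)$ by \eqref{inclusion}; (b) your dismissal of the borderline radial integral by ``a supplementary $\log$-factor'' is not quite right. When $2(\gamma-m+k)+N=0$ the naive $\int_0^{3C\varepsilon}r^{-1}\,dr$ genuinely diverges. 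The correct remedy is that for $k\ge1$ the factor $D^k\eta_\varepsilon$ is supported in the annulus $B_{3C\varepsilon}\setminus B_{2C\varepsilon}$, away from the origin (so no singularity arises), while for $k=0$ the borderline only occurs when $N=2m$ and $\gamma=0$, in which case the claimed bound is $\Oo(1)$ and follows trivially from $h\in H^m(\Omega)$.
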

\begin{proof}
	By Proposition \ref{Capacity_monotonicity}(\textit{iii}), it
        is sufficient to prove \eqref{lem_shrinking_asympt} for the
        Dirichlet case $V^m(\Omega)=H^m_0(\Omega)$. Let $\varphi\in
        C^\infty_0(\R^N)$ with $\supp\,\varphi\subset B_2(0)$ and
        $\varphi\equiv1$ in a neighbourhood of $\overline{B_1(0)}$, and define
        $\varphi_\varepsilon(x):=\varphi((C\varepsilon)^{-1}x)$
        for all $\varepsilon>0$ small. Then
        $h_\varepsilon:=\varphi_\varepsilon h$ coincides with $h$ 
in a neighbourhood of  $\overline{B_{C\varepsilon}(0)}$. By monotonicity
\begin{equation*}
		\begin{split}
			\capm(K_\varepsilon,h)&\leq\capm(\overline{B_{C\varepsilon}(0)},h)\leq\intOmega|\nabla^m h_\varepsilon|^2\\
			&\les\sum_{k=0}^m\int_{B_{2C\varepsilon}(0)}|D^{m-k}\varphi_\varepsilon(x)|^2|D^kh(x)|^2\,dx\\
			&\les\sum_{k=0}^m(C\varepsilon)^{2k-2m}\int_{B_{2C\varepsilon}(0)}\left|D^{m-k}\varphi\left(\frac x{C\varepsilon}\right)\right|^2|D^kh(x)|^2\,dx\\
			&\les\sum_{k=0}^m(C\varepsilon)^{2k-2m+N}\int_{B_2(0)}|D^{m-k}\varphi(y)|^2|D^kh(C\varepsilon y)|^2\,dy\\
			&\les\varepsilon^{N-2m+2\gamma}\sum_{k=0}^m\int_{B_2(0)}|D^{m-k}\varphi(y)|^2\,dy\les\varepsilon^{N-2m+2\gamma},
		\end{split}
	\end{equation*}
	having used the assumption that
        $\|D^kh\|_\infty\les\varepsilon^{\gamma-k}$ in 
$B_{2C\varepsilon}(0)$.
\end{proof}

Next, having in mind the model case $K_\varepsilon:=\varepsilon\cK$ for a fixed compactum $\cK$, we consider families of compact sets which uniformly shrink to $\{0\}$ as in \eqref{familiy_domain_unif_shr} but enjoying a more specific structure. 
To this aim we assume
\begin{enumerate}[(M1)]
	\item there exists $M\subset\R^N$ compact such that $\varepsilon^{-1}K_\varepsilon\subseteq M$ for all $\varepsilon\in(0,1)\,$;
	\item there exists $\cK\subset\R^N$ compact such that $\R^N\setminus\varepsilon^{-1}K_\varepsilon\to\R^N\setminus\cK$ in the sense of Mosco as $\varepsilon\to0$.
\end{enumerate}
In our context (M2) means the following:
\begin{enumerate}
	\item[(\textit{i})] if $u_\varepsilon\in D^{m,2}_0(\R^N\setminus\varepsilon^{-1}K_\varepsilon)$ is so that $u_\varepsilon\rightharpoonup u$ in $D^{m,2}_0(\R^N)$ as $\varepsilon\to0$, then $u\in D^{m,2}_0(\R^N\setminus\cK)$;
	\item[(\textit{ii})] if $u\in D^{m,2}_0(\R^N\setminus\cK)$,
          then there exists a family
          $\{u_\varepsilon\}_{\varepsilon>0}$ such that
            $u_\varepsilon\in
            D^{m,2}_0(\R^N\setminus\varepsilon^{-1}K_\varepsilon)$ for
            all $\varepsilon>0$ and $u_\varepsilon\to u$ in $D^{m,2}_0(\R^N)$ as $\varepsilon\to0$.
\end{enumerate}
In this case we  also say that
$D^{m,2}_0(\R^N\setminus\varepsilon^{-1}K_\varepsilon)$ converges to
$D^{m,2}_0(\R^N\setminus\cK)$ in the sense of Mosco.

\begin{remark}\label{rmk_MK}
  Assumption (M1) is actually equivalent to the condition
  \eqref{familiy_domain_unif_shr}, since $M\subset B_C(0)$ for some $C>0$.
 \end{remark}

\begin{lem}\label{Mosco_equiv}
  Let $N>2m$. Under the assumption (M1) the following are equivalent:
	\begin{enumerate}
		\item $D^{m,2}_0(\R^N\setminus\varepsilon^{-1}K_\varepsilon)$ converges to $D^{m,2}_0(\R^N\setminus\cK)$ in the sense of Mosco;
		\item $H^m_0(B_R(0)\setminus\varepsilon^{-1}K_\varepsilon)$ converges to $H^m_0(B_R(0)\setminus\cK)$ in the sense of Mosco for all $R>r(M)$, where $r(M):=\inf\{\rho>0\,|\,B_\rho(0)\supset M\}$.
	\end{enumerate}
\end{lem}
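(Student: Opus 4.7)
The strategy on both sides is to mediate between the $D^{m,2}_0(\R^N)$-framework and the $H^m_0(B_R)$-framework via zero-extension and smooth cutoffs, exploiting (M1) to guarantee that, for $R>r(M)$, all sets $\varepsilon^{-1}K_\varepsilon$ (and the limit $\cK$, which one may verify must lie in $M$) are confined well inside $B_R$. This gives a ``safety annulus'' $B_R\setminus M$ in which cutoff functions can be freely multiplied in.

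For $(2)\Rightarrow(1)$, pick a smooth $\eta_R\in C^\infty_0(B_{2R})$ with $\eta_R\equiv 1$ on $B_R$. If $u_\varepsilon\in D^{m,2}_0(\R^N\setminus \varepsilon^{-1}K_\varepsilon)$ with $u_\varepsilon\rightharpoonup u$ in $D^{m,2}_0(\R^N)$, then $\eta_R u_\varepsilon\in H^m_0(B_{2R}\setminus\varepsilon^{-1}K_\varepsilon)$ (approximate $u_\varepsilon$ by $C^\infty_0(\R^N\setminus\varepsilon^{-1}K_\varepsilon)$ functions and multiply), and $\eta_R u_\varepsilon\rightharpoonup\eta_R u$ in $H^m_0(B_{2R})$. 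Invoking (2)(i) with radius $2R>r(M)$ yields $\eta_R u\in H^m_0(B_{2R}\setminus\cK)\subset D^{m,2}_0(\R^N\setminus\cK)$ (trivial extension). Letting $R\to\infty$, I would use the Hardy-type controls on $|\nabla^j u|/|x|^{m-j}$ that follow from the description of $D^{m,2}_0(\R^N)$ (see Theorem \ref{HR_ineq} and Proposition \ref{HR_spaces}) to show $\|\nabla^m(\eta_R u-u)\|_2\to 0$; since $D^{m,2}_0(\R^N\setminus\cK)$ is closed, $u$ lies in it. For the lim-sup part, approximate $u\in D^{m,2}_0(\R^N\setminus\cK)$ by $\phi_n\in C^\infty_0(\R^N\setminus\cK)$, each supported in some $B_{R_n}$; apply (2)(ii) inside $B_{R_n}$ to obtain $\phi_n^\varepsilon\in H^m_0(B_{R_n}\setminus \varepsilon^{-1}K_\varepsilon)$ with $\phi_n^\varepsilon\to\phi_n$ in $H^m$, extend by zero, and diagonalize $\varepsilon\mapsto n(\varepsilon)$.

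For $(1)\Rightarrow(2)$, fix $R>r(M)$. Given $u_\varepsilon\in H^m_0(B_R\setminus\varepsilon^{-1}K_\varepsilon)$ with $u_\varepsilon\rightharpoonup u$ in $H^m_0(B_R)$, the zero extensions $\tilde u_\varepsilon$ belong to $D^{m,2}_0(\R^N\setminus\varepsilon^{-1}K_\varepsilon)$ and converge weakly to $\tilde u$ in $D^{m,2}_0(\R^N)$; by (1)(i), $\tilde u\in D^{m,2}_0(\R^N\setminus\cK)$. To upgrade this to $u\in H^m_0(B_R\setminus\cK)$, pick $\phi_n\in C^\infty_0(\R^N\setminus\cK)$ with $\phi_n\to\tilde u$ in $D^{m,2}_0$ (hence in $H^m$ on bounded sets by Sobolev), and choose a family $\zeta_\delta\in C^\infty_0(B_R)$ with $\zeta_\delta\equiv 1$ on $B_{R-2\delta}$ and $|D^\beta\zeta_\delta|\lesssim\delta^{-|\beta|}$. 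Since $\cK\subset M\subset B_{R-2\delta}$ for $\delta$ small, $\zeta_\delta\phi_n\in C^\infty_0(B_R\setminus\cK)$, and a diagonal argument gives $\zeta_{\delta_n}\phi_n\to\tilde u$ in $H^m$ provided $(\zeta_\delta-1)\tilde u\to 0$ in $H^m(B_R)$ as $\delta\to 0$. The lim-sup direction is symmetric: extend $u\in H^m_0(B_R\setminus\cK)$ by zero, apply (1)(ii) to get $\tilde u_\varepsilon\to\tilde u$ in $D^{m,2}_0$, and set $u_\varepsilon:=\zeta_{\delta(\varepsilon)}\tilde u_\varepsilon$, which lies in $H^m_0(B_R\setminus\varepsilon^{-1}K_\varepsilon)$ once $\delta(\varepsilon)$ is small enough that $\varepsilon^{-1}K_\varepsilon\subset B_{R-2\delta(\varepsilon)}$, again diagonalizing.

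\textbf{Main obstacle.} The delicate step is proving that $\zeta_\delta v\to v$ in $H^m(B_R)$ for every $v\in H^m_0(B_R)$. This is not automatic because $|D^\beta\zeta_\delta|$ blows up like $\delta^{-|\beta|}$ on the annulus $\{R-2\delta<|x|<R-\delta\}$. The resolution is to combine the pointwise Leibniz expansion of $D^\alpha((\zeta_\delta-1)v)$ with the Hardy inequality for $H^m_0(B_R)$, namely $\int_{B_R}|D^{m-k}v|^2/d(x,\partial B_R)^{2k}\,dx\lesssim\|v\|_{H^m}^2$: this gives $\|D^\beta\zeta_\delta\,D^{\alpha-\beta}v\|_2^2\lesssim\delta^{2(m-|\alpha|)}\,o(1)$ as $\delta\to 0$, by absolute continuity of the Hardy-weighted integral on the shrinking annulus. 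The same Hardy machinery, adapted in the Navier setting via Theorem \ref{HR_ineq} to the $D^{m,2}_0$-case, underlies the convergence $\eta_R u\to u$ used in $(2)\Rightarrow(1)$.
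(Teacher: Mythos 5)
Your proposal reproduces the paper's overall architecture — mediate between $D^{m,2}_0(\R^N)$ and $H^m_0(B_R)$ by zero-extension and cutoffs, then diagonalize — but you control the cutoff errors by a different mechanism, and this is the only place where the arguments truly diverge.

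In $(2)\Rightarrow(1)$, to show $\eta_R u\to u$ in $D^{m,2}_0(\R^N)$ the paper does \emph{not} invoke Hardy--Rellich: it writes, for $\eta_k:=\eta_1(\cdot/k)$, the cutoff error as $\sum_{j<m}\|D^{m-j}\eta_k\,D^ju\|_{L^2}$, bounds each factor with H\"older using the conjugate pair $(p_j,q_j)=\big(\tfrac{N}{m-j},\tfrac{2N}{N-2(m-j)}\big)$, observes that $\|D^{m-j}\eta_k\|_{L^{p_j}}$ is scale-invariant, and that $\|D^ju\|_{L^{q_j}(\R^N\setminus B_{k/2})}\to0$ by the Sobolev embedding \eqref{eq:sob-ineq}. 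This is elementary, self-contained, and valid for every $m\geq2$, $N>2m$. You instead propose to use the Hardy controls on $\nabla^j u/|x|^{m-j}$; the estimate is fine, but you cite Theorem \ref{HR_ineq} and Proposition \ref{HR_spaces}, which are stated and proved in the paper \emph{only} for $m=2$ — indeed, the $m\geq3$ analogue of Proposition \ref{HR_spaces} is explicitly flagged as open in Section \ref{Sec_OP}. What you actually need is only the one inclusion $D^{m,2}_0(\R^N)\subset\cS^m(\R^N)$, which does follow for all $N>2m$ from iterated higher-order Hardy inequalities (e.g.\ [DH]); so the mathematics is sound, but the in-paper references are out of scope, and the paper's Sobolev route avoids the issue entirely.

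In $(1)\Rightarrow(2)$ you use a boundary cutoff $\zeta_\delta$ shrinking toward $\partial B_R$ together with a domain-Hardy inequality $\int_{B_R}|D^{m-k}v|^2/d(x,\partial B_R)^{2k}\lesssim\|v\|_{H^m_0}^2$. This works, but it is heavier than what the paper does: for (M$_R$2.ii) the paper first approximates $u$ by $v\in C^\infty_0(B_R\setminus\cK)$, then multiplies by a \emph{fixed} cutoff $\eta$ which equals $1$ on a neighborhood $\Lambda_1\supset\supp v$ and vanishes near $\partial B_R$, so no boundary blow-up appears and no Hardy inequality is needed — just the Sobolev argument of \eqref{eq:1}. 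For (M$_R$2.i) the paper contents itself with "by construction"; the intended simpler argument is to split $u=\eta u+(1-\eta)u$ with $\eta$ a fixed cutoff equal to $1$ near $\cK$ and compactly supported in $B_R$: the piece $\eta u=\eta u^E$ is approximable by $\eta\phi_n\in C^\infty_0(B_R\setminus\cK)$, while $(1-\eta)u$ vanishes near $\cK$ and is trivially in $H^m_0(B_R\setminus\cK)$. Your explicit boundary-cutoff argument does fill in what the paper leaves implicit, which is a plus, but it pays with the Hardy machinery. Net: your proof is correct; it buys you an explicit treatment of the step the paper compresses, at the price of heavier analytical tools and a citation that is too narrow for the generality of the lemma.
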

We denote by (M$_R$2.i) and (M$_R$2.ii) the correspondent conditions (M2.i) and (M2.ii) which enter in the definition of the Mosco convergence relative to the space $H^m_0(B_R(0))$. In the following we use the shorter notation $B_R:=B_R(0)$.
\begin{proof}
  $\boldsymbol{1)\Rightarrow 2)}$. \textit{Verification of
    (M$_R$2.i)}. Let
  $\{u_\varepsilon\}_{\varepsilon>0}\subset H^m_0(B_R)$ be a family of
  functions such that $u_\varepsilon\in H^m_0(B_R\setminus\eKe)$ and
  $u_\varepsilon\rightharpoonup u$ in $H^m_0(B_R)$. We show that
  $u\in H^m_0(B_R\setminus~\!\cK)$. Denoting by $u_\varepsilon^E$ and
  $u^E$ the trivial extension of $u_\varepsilon$ and $u$ outside $B_R$
  respectively, then $u_\varepsilon^E\in D^{m,2}_0(\R^N\setminus\eKe)$
  and $u_\varepsilon^E\rightharpoonup u^E$ in $D^{m,2}_0(\R^N)$. Hence
  condition (M2.i) guarantees that
  $u^E\in D^{m,2}_0(\R^N\setminus\cK)$, which by construction implies
  that $u\in H^m_0(B_R\setminus\cK)$.
	 
	 \textit{Verification of (M$_R$2.ii)}. Let
         $v\in C^\infty_0(B_R\setminus\cK)$ and
         $\Lambda_1,\Lambda_2\subset\Omega$ be two open sets such that
         $\supp\,v\subset\subset\Lambda_1\subset\subset\Lambda_2\subset\subset
         B_R$. Take $\eta\in C^\infty_0(\Lambda_2)$ with $\eta\equiv1$
         on $\Lambda_1$. Since $v^E\in D^{m,2}_0(\R^N\setminus\cK)$,
         then by (M2.ii) there exists a family 
         $\{v_\varepsilon\}_{\varepsilon>0}\subset D^{m,2}_0(\R^N)$ with
         $v_\varepsilon\in D^{m,2}_0(\R^N\setminus\eKe)$ for all
         $\varepsilon>0$ such that $v_\varepsilon\to v^E$ in
         $D^{m,2}_0(\R^N)$,
         i.e. $\|\nabla^m(v_\varepsilon-v^E)\|_{L^2(\R^N)}\to0$ as
         $\varepsilon\to0$. By construction,
         $\eta v_\varepsilon\in H^m_0(B_R\setminus\eKe)$. We claim
         that $\eta v_\varepsilon\to v$ in $H^m_0(B_R)$. Indeed,
         denoting by $q_j:=2^*_{m,j}=\frac{2N}{N-2(m-j)}\geq2$ and
         $p_j:=2\left(\tfrac{q_j}2\right)'=\frac N{m-j}$ for $j\in\{0,\dots,m\}$,
         one has
	 \begin{equation}\label{eq:1}
	 	\begin{split}
	 		\|\nabla^m(\eta v_\varepsilon-v)\|_{L^2(B_R)}&=\|\nabla^m(\eta(v_\varepsilon-v))\|_{L^2(B_R)}\leq\sum_{j=0}^m\|D^{m-j}\eta\,D^j(v_\varepsilon-v)\|_{L^2(B_R)}\\
	 		&\leq\sum_{j=0}^m\|D^{m-j}\eta\|_{L^{p_j}(\supp\,\eta)}\|D^j(v_\varepsilon-v^{E})\|_{L^{q_j}(\R^N)}\\
	 		&\leq\sum_{j=0}^m|\supp\,\eta|^{\frac1{p_j}}\|D^{m-j}\eta\|_\infty\|D^m(v_\varepsilon-v^{E})\|_{L^2(\R^N)}\\
	 		&\leq C(m,N,R)\|\eta\|_{W^{m,\infty}(\R^N)}\|\nabla^m(v_\varepsilon-v^{E})\|_{L^2(\R^N)}\to0.
	 	\end{split}
	 \end{equation}
	 The last steps are due to the critical Sobolev embedding on
         $\R^N$ (for which it is fundamental that $N>2m$), see \eqref{eq:sob-ineq}, and to the equivalence of the norms $\|D^m\cdot\|_2$ and $\|\nabla^m\cdot\|_2$, see e.g. \cite[Chp. 2.2]{GGS}.

The above argument and the density of $C^\infty_0(B_R\setminus\cK)$
  in $H^m_0(B_R\setminus\cK)$ imply that, fixing any $v\in
  H^m_0(B_R\setminus\cK)$,
  for every $\delta>0$ there
  exists a family 
  $\{v_{\delta,\varepsilon}\}_{\varepsilon>0}$ such that
  $v_{\delta,\varepsilon}\in H^m_0(B_R\setminus K_\varepsilon)$ and 
  $\|v_{\delta,\varepsilon}-v\|_{H^m(B_R)}<\delta$ for all
  $\varepsilon\in(0,\bar\varepsilon_\delta]$ for some
  $\bar\varepsilon_\delta>0$. Therefore there exists a vanishing sequence
  $(\varepsilon_n)_n\searrow0$ such that  $\big\|v_{\frac1k,\varepsilon}-v\big\|_{H^m(B_R)}<\frac1k$ for all
  $\varepsilon\in(0,\varepsilon_k]$. Defining
  $v_\varepsilon=v_{\frac1n,\varepsilon}$ for
  $\varepsilon\in(\varepsilon_{n+1}, \varepsilon_{n}]$, we have that,
  for all $\varepsilon\in (0, \varepsilon_{1}]$, $v_\varepsilon\in
  H^m_0(B_R\setminus K_\varepsilon)$ and $v_\varepsilon\to v$ in
  $H^m_0(B_R)$ as $\varepsilon\to0$.
         
	 $\boldsymbol{2)\Rightarrow 1)}$. \textit{Verification of
           (M2.i)}. Let
         $\{u_\varepsilon\}_{\varepsilon>0}\subset D^{m,2}_0(\R^N)$ be
         a family of functions such that
         $u_\varepsilon\in D^{m,2}_0(\R^N\setminus\eKe)$ and
         $u_\varepsilon\rightharpoonup u$ in $D^{m,2}_0(\R^N)$. Taking
         $\eta\in C^\infty(\R^N)$ and $R>0$ such that
         $\supp\,\eta\subset B_R(0)$, due to the continuity of
           the map $D^{m,2}_0(\R^N)\to H^m_0(B_R)$, $u\mapsto \eta u$,
           which can be easily proved arguing as in \eqref{eq:1}, one
         has that $\eta u_\varepsilon\rightharpoonup\eta u$ in
         $H^m_0(B_R)$. Hence, (M$_R$2.i) implies
         $\eta u\in H^m_0(B_R\setminus\cK)$. Hence
           $\eta u\in D^{m,2}_0(\R^N\setminus\cK)$ for every
           $\eta\in C^\infty_0(\R^N)$. Let us now take
         $\eta_1\in C^\infty_0(\R^N)$ with $0\leq\eta_1\leq1$,
         $\eta_1\equiv1$ on $B_{\frac12}$ and
         $\supp\,\eta_1\subset B_1$, and define
         $\eta_k:=\eta_1\left(\frac\cdot k\right)$, so that
         $\supp\,\eta_k\subset B_k$. We are going to prove that
         $\eta_ku\to u$ in
         $D^{m,2}_0(\R^N)$ as $k\to+\infty$, in order to conclude that
         $u\in D^{m,2}_0(\R^N\setminus\cK)$. We estimate as follows:
\begin{equation}\label{eq:2}
		\begin{split}
	 		\|\nabla^m(\eta_ku-u)\|_{L^2(\R^N)}^2&\leq\int_{\R^N}|\eta_k-1|^2|\nabla^mu|^2+\sum_{j=0}^{m-1}\int_{B_k\setminus
                          B_{\frac k2}}|D^{m-j}\eta_k|^2|D^ju|^2\\
	 		&\leq\int_{\R^N\setminus B_{\frac k2}}|\nabla^mu|^2+\sum_{j=0}^{m-1}\|D^{m-j}\eta_k\|_{L^{p_j}(\R^N)}^2\|D^ju\|_{L^{q_j}(\R^N\setminus B_{\frac k2})}^{2}\\
	 		&=\int_{\R^N\setminus B_{\frac k2}}|\nabla^mu|^2+\sum_{j=0}^{m-1}\|D^{m-j}\eta_1\|_{L^{p_j}(\R^N)}^2\|D^ju\|_{L^{q_j}(\R^N\setminus B_{\frac k2})}^{2},
	 	\end{split}
 	\end{equation}
	where
         we have used the fact that $\supp\, (D^{m-j}\eta_k)\subset B_k\setminus
                          B_{\frac k2}$ if $j\leq m-1$ and 
          \begin{align*}
            \|D^{m-j}\eta_k\|_{L^{p_j}(\R^N)}^2&=k^{-2(m-j)}\left(\int_{\R^N}|D^{m-j}\eta_1(x/k)|^{\frac{N}{m-j}}\,dx\right)^{\!\!\frac{2(m-j)}N}\\&=\left(\int_{\R^N}|D^{m-j}\eta_1(y)|^{\frac{N}{m-j}}\,dy\right)^{\!\!\frac{2(m-j)}N}.
          \end{align*}
     Since $u\in D^{m,2}_0(\R^N)$, the first term at the right-hand
     side of \eqref{eq:2} converges to $0$ as $k\to+\infty$;
moreover, the critical Sobolev embedding
        \eqref{eq:sob-ineq} implies that $D^j u\in
          L^{q_j}(\R^N)$ for all $0\leq j\leq m$, so that also the
          second term goes to $0$. We conclude that $\eta_ku\to u$ in $D^{m,2}_0(\R^N)$ as $k\to+\infty$, which yields $u\in D^{m,2}_0(\R^N\setminus\cK)$.

        \textit{Verification of (M2.ii)}. Let
        $u\in D^{m,2}_0(\R^N\setminus\cK)$. Let $\delta>0$. By
        density, there exists a function
        $v\in C^\infty_0(\R^N\setminus\cK)$ such that
        $\|\nabla^m(u-v)\|_{L^2(\R^N)}<\frac\delta2$. Take $R>0$ so
        that $\supp\,v\subset B_R$. Then
        $v\in H^m_0(B_R\setminus\cK)$ and by (M$_R$2.ii) there
        exist $\bar\varepsilon_\delta>0$ and a family of functions
          $\{\varphi^\delta_\varepsilon\}_{\varepsilon\in(0, \bar\varepsilon_\delta)}$
          such that $\varphi^\delta_\varepsilon\in H^m_0(B_R\setminus\eKe)$
          and
          $\|\nabla^m(v-\varphi^\delta_\varepsilon)\|_{L^2(B_R)}<\frac\delta2$
          for all $\varepsilon\in(0, \bar\varepsilon_\delta)$. Hence, for
          all $\varepsilon\in(0, \bar\varepsilon_\delta)$,
          $(\varphi_\varepsilon^\delta)^E\in D^{m,2}_0(\R^N\setminus\eKe)$ and
          $\|\nabla^m(u-(\varphi_\varepsilon^\delta)^E)\|_{L^2(\R^N)}<\delta$.

          As a consequence, there exists a strictly decreasing and
          vanishing sequence 
          $\{\varepsilon_n\}_n$ such that, for every $n\in{\mathbb
            N}\setminus\{0\}$, there exists a family of functions
          $\{u^n_\varepsilon\}_{\varepsilon\in(0,\varepsilon_n)}$
          such that $u^n_\varepsilon\in D^{m,2}_0(\R^N\setminus\eKe)$
          and $\|\nabla^m(u-u_\varepsilon^n)\|_{L^2(\R^N)}<\frac1n$ for
          all $\varepsilon\in(0, \varepsilon_n)$. For every
          $\varepsilon\in (0,\varepsilon_1)$, we define
          $u_\varepsilon:=u_\varepsilon^n$ if
          $\varepsilon_{n+1}\leq\varepsilon<\varepsilon_n$. It is easy
          to verify that,  by construction, $u_\varepsilon\in
          D^{m,2}_0(\R^N\setminus\eKe)$ for all $\varepsilon\in
          (0,\varepsilon_1)$ and
          $\|\nabla^m(u-u_\varepsilon)\|_{L^2(\R^N)}\to 0$ as
          $\varepsilon\to 0$. (M2.ii) is thereby verified.        
      \end{proof}

Before stating the main results of the section, we prepose a lemma about the stability of the $(h,V^m)$-capacitary potential with respect to the function $h$.
\begin{lem}\label{conv_cap_pot}
  Let $K\subset\Omega\subset\R^N$, $K$ compact,
  $\{h_n\}_{n\in{\mathbb N}}\subset H^m_{loc}(\Omega)$ and $h\in
  H^m_{loc}(\Omega)$. Let us suppose that, for some $\mathcal
  U(K)\subset\Omega$ open neighbourhood of $K$, $h_n\to h$ in $H^m(\mathcal
  U(K))$ as $n\to\infty$ and
  denote by $W_{K,h_n}$ (resp. $W_{K,h}$) the capacitary potential for
  $\capVm(K,h_n)$ (resp. $\capVm(K,h)$). Then $W_{K,h_n}\to W_{K,h}$
  in $V^m(\Omega)$ and
  $\capVm(K,h_n)\to\capVm(K,h)$.
\end{lem}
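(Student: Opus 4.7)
The plan is to reduce, via a cutoff, to a situation in which the ``boundary data'' $h_n, h$ are actually elements of $V^m(\Omega)$ converging in the $V^m$-norm, and then to exploit the weak equation \eqref{W_ku_pb} satisfied by the capacitary potentials. First, I would pick an open set $\mathcal V$ with $K\subset\mathcal V\subset\subset\mathcal U(K)$ and a cutoff $\eta_K\in C^\infty_0(\mathcal U(K))$ with $\eta_K\equiv1$ on $\mathcal V$. Since the $V^m$-capacity depends only on the values of the function in a neighbourhood of $K$, there holds $\capVm(K,h_n)=\capVm(K,\eta_Kh_n)$ and $\capVm(K,h)=\capVm(K,\eta_Kh)$, as well as $W_{K,h_n}=W_{K,\eta_Kh_n}$ and $W_{K,h}=W_{K,\eta_Kh}$ (the capacitary potentials coincide because the admissible classes coincide). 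Set $\tilde h_n:=\eta_Kh_n$ and $\tilde h:=\eta_Kh$; these functions have compact support in $\Omega$, hence lie in $H^m_0(\Omega)\subset V^m(\Omega)$, and the hypothesis $h_n\to h$ in $H^m(\mathcal U(K))$ together with the Leibniz rule yields $\tilde h_n\to \tilde h$ in $V^m(\Omega)$.

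Next, denote $v_n:=W_{K,\tilde h_n}-W_{K,\tilde h}\in V^m(\Omega)$. Since both $W_{K,\tilde h_n}-\tilde h_n$ and $W_{K,\tilde h}-\tilde h$ belong to $V^m_0(\Omega\setminus K)$, we have $v_n-(\tilde h_n-\tilde h)\in V^m_0(\Omega\setminus K)$, so this difference is an admissible test function in the weak formulation of \eqref{W_ku_pb} for both $W_{K,\tilde h_n}$ and $W_{K,\tilde h}$. Subtracting the two resulting identities gives
\begin{equation*}
\int_\Omega\nabla^m v_n\,\nabla^m\bigl(v_n-(\tilde h_n-\tilde h)\bigr)=0,
\end{equation*}
which by the Cauchy--Schwarz inequality yields
\begin{equation*}
\|\nabla^m v_n\|_{L^2(\Omega)}\leq\|\nabla^m(\tilde h_n-\tilde h)\|_{L^2(\Omega)}\longrightarrow0.
\end{equation*}
By the equivalence of norms \eqref{eq:equiv-norme} on $V^m(\Omega)$, this proves $W_{K,h_n}\to W_{K,h}$ in $V^m(\Omega)$.

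Finally, the convergence of capacities follows immediately from $\capVm(K,h_n)=\|\nabla^mW_{K,h_n}\|_2^2$, the analogous identity for $h$, and the just-proved strong convergence of the potentials. The only real subtlety in the argument is the reduction step: one must check that the cutoff $\eta_K$ does not alter the capacitary potentials, which follows at once from the fact that the affine subspaces $\{f\in V^m(\Omega):f-\tilde h_n\in V^m_0(\Omega\setminus K)\}$ and $\{f\in V^m(\Omega):f-h_n\in V^m_0(\Omega\setminus K)\}$ coincide (their difference being a function in $V^m(\Omega)$ that vanishes in a neighbourhood of $K$, hence an element of $V^m_0(\Omega\setminus K)$ by the same Dirichlet-type extension principle). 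Once this reduction is in place, the rest is a one-line estimate based on the Euler--Lagrange equation.
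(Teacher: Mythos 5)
Your proof is correct and follows essentially the same strategy as the paper's: reduce via a cutoff to boundary data $\tilde h_n,\tilde h\in V^m(\Omega)$, test the (subtracted) Euler--Lagrange identities for the two capacitary potentials against $v_n-(\tilde h_n-\tilde h)\in V^m_0(\Omega\setminus K)$, and conclude by Cauchy--Schwarz that $\|\nabla^m v_n\|_2\leq\|\nabla^m(\tilde h_n-\tilde h)\|_2\to0$. The paper interleaves the cutoff into the expansion of $\|\nabla^m(W_{K,h_n}-W_{K,h})\|_2^2$ rather than isolating it as a preliminary reduction, but the resulting identity and estimate are the same, so the two arguments are equivalent.
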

\begin{proof}
Being capacitary potentials, the functions $W_{K,h_n}$ and $W_{K,h}$ satisfy
\begin{equation*}
\intOmega\nabla^m W_{K,h}\nabla^m\varphi=0\quad\text{and}\quad
\intOmega\nabla^m W_{K,h_n}\nabla^m\varphi=0\quad \text{for all }n\in
{\mathbb N}\text{ and } \varphi\in V^m_0(\Omega\setminus K).
\end{equation*}
Let $\eta_K\in C^\infty(\R^N)$ be a cutoff function such that
$0\leq \eta_K\leq1$, $\supp\,\eta_K\subset \cU(K)$ and $\eta_K\equiv 1$ in a
  neighbourhood  of $K$. Hence, by construction, one has that
	$$W_{K,h_n}-\eta_Kh_n\in V^m_0(\Omega\setminus K)\qquad\mbox{and}\qquad W_{K,h}-\eta_Kh\in V^m_0(\Omega\setminus K).$$
	Therefore
	\begin{equation*}
	\begin{split}
		\|\nabla^m(W_{K,h_n}&-W_{K,h})\|_2^2=\intOmega\left(\nabla^mW_{K,h_n}-\nabla^mW_{K,h}\right)\left(\nabla^mW_{K,h_n}-\nabla^mW_{K,h}\right)\\
		&=\intOmega\nabla^mW_{K,h_n}\nabla^m\left(W_{K,h_n}-\eta_Kh_n\right)+\intOmega\nabla^mW_{K,h_n}\nabla^m\left(\eta_Kh_n-\eta_Kh\right)\\
		&\quad+\intOmega\nabla^mW_{K,h_n}\nabla^m\left(\eta_Kh-W_{K,h}\right)-\intOmega\nabla^mW_{K,h}\nabla^m\left(W_{K,h_n}-\eta_Kh_n\right)\\
		&\quad-\intOmega\nabla^mW_{K,h}\nabla^m\left(\eta_Kh_n-\eta_Kh\right)-\intOmega\nabla^mW_{K,h}\nabla^m\left(\eta_Kh-W_{K,h}\right)\\
		&=\intOmega\nabla^m\left(W_{K,h_n}-W_{K,h}\right)\nabla^m\left(\eta_Kh_n-\eta_Kh\right)\\
		&\leq\|\nabla^mW_{K,h_n}-\nabla^mW_{K,h}\|_2\|\nabla^m\left(\eta_K\left(h_n-h\right)\right)\|_2.
	\end{split}
	\end{equation*}
	This yields
	\begin{equation*}
	\begin{split}
		\|\nabla^mW_{K,h_n}-\nabla^mW_{K,h}\|_2&\leq\|\nabla^m\left(\eta_K\left(h_n-h\right)\right)\|_2
		\les\|h_n-h\|_{H^m(\cU(K))}\to0,
	\end{split}
	\end{equation*}
        i.e. $W_{K,h_n}\to W_{K,h}$ in $V^m(\Omega)$, directly
        implying that $\capVm(K,h_n)\to\capVm(K,h)$ as $n\to\infty$.
\end{proof}

\begin{remark}
	In case $\Omega=\R^N$ the same result holds with $W_{K,h_n}\to W_{K,h}$ in $D^{m,2}_0(\R^N)$.
\end{remark}

We are now in the position to prove the main results of this section,
namely a generalized version of Theorems
\ref{Thm_blowup_easycase}-\ref{asymptotic_eigv_easycase_Nav}, which
take into account families of domains which satisfy (M1)-(M2), rather
than just the model case $K_\varepsilon=\varepsilon\cK$.

Motivated by the asymptotic scaling properties of the eigenfunctions
\eqref{claim_U}, we apply a blow-up argument to a 
  rescaled problem, 
in order to find a limit equation on $\R^N\setminus\cK$ and to
prove the convergence of the family of scaled capacitary potentials
to the one for the limiting problem. The capacity
$\capVm(K_\varepsilon,u_J)$ will behave then as the limit capacity on
$\R^N\setminus\cK$ multiplied by a suitable power of $\varepsilon$
given by the scaling. In this argument, we work with
  the homogeneous Sobolev spaces and, in particular, for the Navier case
  the characterization via Hardy-Rellich inequalities of Section
  \ref{Sec_Hardy} will be needed. This is the main reason for the
  restriction to the fourth-order case in the Navier setting, since, up to our knowledge,
  the extension of Proposition \ref{HR_spaces} to the full generality
  $m\geq2$ is an open problem.  \vskip0.2truecm
\begin{thm}[Asymptotic expansion of the capacity, Dirichlet case]\label{Thm_blowup}
  Let $N>2m$ and $\Omega\subset\R^N$ be a bounded smooth domain with
  $0\in\Omega$. Let $\{K_\varepsilon\}_{\varepsilon>0}$ be a
    family of compact sets  uniformly concentrating to $\{0\}$
    satisfying   (M1)-(M2) for some compact set $\cK$. Let $\lambda_J$ be an eigenvalue of
  \eqref{eq_unperturbed} with Dirichlet boundary conditions and $u_J\in H^m_0(\Omega)$ be a corresponding
  eigenfunction normalized in $L^2(\Omega)$. Then
	\begin{equation}\label{Thm_blowup_asympt}
		{\rm cap}_{m\!,\,\Omega}(K_\varepsilon,u_J)=\varepsilon^{N-2m+2\gamma}\left(\capp_{m,\R^N}(\cK,U_0)+{\scriptstyle \mathcal{O}}(1)\right)
	\end{equation}
	as $\varepsilon\to0$, with $\gamma$ and $U_0$ as in \eqref{claim_U}.
\end{thm}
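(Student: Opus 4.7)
The plan is to carry out a blow-up argument based on the natural scaling suggested by \eqref{claim_U}. First I define the rescaled capacitary potential $\tilde W_\varepsilon(y):=\varepsilon^{-\gamma}W_\varepsilon(\varepsilon y)$ on the dilated domain $\Omega_\varepsilon:=\varepsilon^{-1}\Omega$, where $W_\varepsilon:=W_{K_\varepsilon,u_J}$. By a direct change of variables,
\begin{equation*}
\capm(K_\varepsilon,u_J)=\int_{\Omega}|\nabla^m W_\varepsilon|^2
=\varepsilon^{N-2m+2\gamma}\int_{\Omega_\varepsilon}|\nabla^m\tilde W_\varepsilon|^2,
\end{equation*}
so the theorem reduces to proving $\int_{\Omega_\varepsilon}|\nabla^m\tilde W_\varepsilon|^2\to\capp_{m,\R^N}(\cK,U_0)$ as $\varepsilon\to 0$. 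Noting that $\tilde W_\varepsilon\in H^m_0(\Omega_\varepsilon)$ (so, after the trivial extension to $0$ outside $\Omega_\varepsilon$, it belongs to $D^{m,2}_0(\R^N)$), and that the rescaled matching relation reads $\tilde W_\varepsilon-U_\varepsilon\in H^m_0(\Omega_\varepsilon\setminus\varepsilon^{-1}K_\varepsilon)$ with $U_\varepsilon(y)=\varepsilon^{-\gamma}u_J(\varepsilon y)$, while $(-\Delta)^m\tilde W_\varepsilon=0$ in $\Omega_\varepsilon\setminus\varepsilon^{-1}K_\varepsilon$.

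The uniform energy bound follows from Proposition \ref{lem_shrinking}: since $u_J$ is analytic at $0$ with vanishing order $\gamma$ we have $|D^ku_J(x)|=\Oo(|x|^{\gamma-k})$ for $0\leq k\leq m$, yielding $\capm(K_\varepsilon,u_J)=\Oo(\varepsilon^{N-2m+2\gamma})$ and hence $\|\nabla^m\tilde W_\varepsilon\|_{L^2(\R^N)}\leq C$. Up to a subsequence, $\tilde W_\varepsilon\rightharpoonup\tilde W$ in $D^{m,2}_0(\R^N)$. To identify $\tilde W$, I would fix a cutoff $\eta\in C^\infty_0(\R^N)$ with $\eta\equiv 1$ on a neighbourhood of the set $M$ of condition (M1), so that $\eta\equiv 1$ on a neighbourhood of $\varepsilon^{-1}K_\varepsilon$ for all $\varepsilon>0$ and $\supp\eta\subset\Omega_\varepsilon$ for $\varepsilon$ small. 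Writing $\tilde W-\eta U_0=\eta(\tilde W-U_0)+(1-\eta)\tilde W$, the second summand is compactly supported away from $\cK$ and therefore lies in $D^{m,2}_0(\R^N\setminus\cK)$; for the first, I would use $\eta(\tilde W_\varepsilon-U_\varepsilon)\in H^m_0(B_R\setminus\varepsilon^{-1}K_\varepsilon)$ for $B_R\supset\supp\eta$, the convergence $U_\varepsilon\to U_0$ in $H^m_{loc}$ from \eqref{claim_U}, and the Mosco convergence statement (M2), which by Lemma \ref{Mosco_equiv} transfers to $H^m_0(B_R\setminus\varepsilon^{-1}K_\varepsilon)\to H^m_0(B_R\setminus\cK)$; then condition (M$_R$2.i) gives $\eta(\tilde W-U_0)\in H^m_0(B_R\setminus\cK)\subset D^{m,2}_0(\R^N\setminus\cK)$. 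Testing the harmonicity equation for $\tilde W_\varepsilon$ against approximants $\varphi_\varepsilon$ of any $\varphi\in C^\infty_0(\R^N\setminus\cK)$ provided by (M2.ii) yields $(-\Delta)^m\tilde W=0$ in $\R^N\setminus\cK$. Since $\capp_{m,\R^N}(\cK,U_0)=\capp_{m,\R^N}(\cK,\eta U_0)$, uniqueness of the capacitary potential forces $\tilde W=W_{\cK,\eta U_0}$, so the limit is independent of the subsequence.

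To upgrade to convergence of energies, by lower semicontinuity $\liminf_{\varepsilon\to 0}\int_{\R^N}|\nabla^m\tilde W_\varepsilon|^2\geq\int_{\R^N}|\nabla^m\tilde W|^2=\capp_{m,\R^N}(\cK,U_0)$. For the reverse inequality I would construct an admissible competitor for $\tilde W_\varepsilon$. By (M2.ii) applied to $W_{\cK,\eta U_0}-\eta U_0\in D^{m,2}_0(\R^N\setminus\cK)$ and density of compactly supported test functions, there exists a family $\varphi_\varepsilon\in D^{m,2}_0(\R^N\setminus\varepsilon^{-1}K_\varepsilon)$ with compact support such that $\varphi_\varepsilon\to W_{\cK,\eta U_0}-\eta U_0$ in $D^{m,2}_0(\R^N)$. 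Setting $\psi_\varepsilon:=\eta U_\varepsilon+\varphi_\varepsilon$, one checks that $\psi_\varepsilon\in H^m_0(\Omega_\varepsilon)$ (since $U_\varepsilon\in H^m_0(\Omega_\varepsilon)$ because $u_J$ satisfies Dirichlet BCs on $\dOmega$) and $\psi_\varepsilon-U_\varepsilon=(\eta-1)U_\varepsilon+\varphi_\varepsilon\in H^m_0(\Omega_\varepsilon\setminus\varepsilon^{-1}K_\varepsilon)$, so that $\psi_\varepsilon$ is admissible. Using $U_\varepsilon\to U_0$ in $H^m_{loc}$ to pass to the limit on $\eta U_\varepsilon$, we obtain $\nabla^m\psi_\varepsilon\to\nabla^m W_{\cK,\eta U_0}$ in $L^2(\R^N)$ and hence
\begin{equation*}
\int_{\Omega_\varepsilon}|\nabla^m\tilde W_\varepsilon|^2\leq\int_{\R^N}|\nabla^m\psi_\varepsilon|^2\longrightarrow\capp_{m,\R^N}(\cK,U_0).
\end{equation*}
Combining the two bounds and multiplying by $\varepsilon^{N-2m+2\gamma}$ yields \eqref{Thm_blowup_asympt}.

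The step I expect to be the most delicate is the identification of the boundary/matching behaviour of the limit $\tilde W$ at $\cK$, because $U_0$ is a homogeneous polynomial and hence does not lie in $D^{m,2}_0(\R^N)$; this is exactly what forces the cutoff trick and the use of the localized Mosco convergence of Lemma \ref{Mosco_equiv}, combined with the critical Sobolev embedding (which requires $N>2m$) to handle cross terms when a cutoff acts on a $D^{m,2}_0(\R^N)$-function. The analogous construction of the recovery sequence $\psi_\varepsilon$ for the upper bound, where admissibility is saved by the crucial fact $U_\varepsilon\in H^m_0(\Omega_\varepsilon)$, is the second non-routine ingredient.
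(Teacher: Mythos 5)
Your proposal follows the same blow-up strategy as the paper: rescale the capacitary potential, extract a weak limit in $D^{m,2}_0(\R^N)$, identify the limit as the capacitary potential $W_{\cK,\eta U_0}$ using the Mosco convergence of the rescaled spaces, and then upgrade to convergence of energies. Two ingredients are genuinely different. (1) For the uniform energy bound you invoke Proposition \ref{lem_shrinking} via the pointwise estimates $|D^ku_J(x)|=\Oo(|x|^{\gamma-k})$; the paper instead uses monotonicity to bound $\capp_{m,\varepsilon^{-1}\Omega}(\varepsilon^{-1}K_\varepsilon,U_\varepsilon)$ by $\capp_{m,B_R}(M,U_\varepsilon)$ on a fixed ball and applies Lemma \ref{conv_cap_pot}. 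Both are valid; yours needs the analytic pointwise asymptotics at the origin, which are available but one step removed from \eqref{claim_U}. (2) For the energy convergence you run a $\liminf$/recovery-sequence argument. The paper avoids this altogether: since $\tW_\varepsilon-\eta U_\varepsilon\in V^m_0(\varepsilon^{-1}\Omega\setminus\varepsilon^{-1}K_\varepsilon)$ and $\tW_\varepsilon$ is $m$-polyharmonic there, one can test the equation with $\tW_\varepsilon-\eta U_\varepsilon$ to obtain $\|\nabla^m\tW_\varepsilon\|_2^2=\int_{B_R}\nabla^m\tW_\varepsilon\,\nabla^m(\eta U_\varepsilon)$, which converges to $\|\nabla^m\tW\|_2^2$ by weak-strong convergence on a fixed compact set. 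This sidesteps entirely the construction of admissible competitors $\psi_\varepsilon$.

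Two points in your write-up should be tightened. First, the claim that $(1-\eta)\tW$ is ``compactly supported away from $\cK$'' is incorrect: $1-\eta\equiv 1$ outside a compact set and $\tW$ does not have compact support, so $(1-\eta)\tW$ merely vanishes in a fixed neighbourhood of $\cK$. That it nevertheless lies in $D^{m,2}_0(\R^N\setminus\cK)$ requires an approximation by $C^\infty_0(\R^N\setminus\cK)$ functions (e.g.\ multiplying approximants of $\tW$ by a cutoff that vanishes near $\cK$ and controlling the cross terms via the critical Sobolev embedding); the decomposition $\tW-\eta U_0=\eta(\tW-U_0)+(1-\eta)\tW$ therefore does not avoid the subtlety. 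The paper's approach is more direct here: it observes $\tW^E_\varepsilon-\eta U_\varepsilon\in D^{m,2}_0(\R^N\setminus\varepsilon^{-1}K_\varepsilon)$ and applies (M2.i) directly to this weakly convergent family, with no decomposition. Second, in your recovery sequence you need $\varphi_\varepsilon$ to have support inside $\Omega_\varepsilon=\varepsilon^{-1}\Omega$ so that $\psi_\varepsilon\in H^m_0(\Omega_\varepsilon)$; the compactly supported approximants produced by (M2.ii) plus density have supports that are not a priori uniformly bounded, so a diagonal argument over the density radius $R$ and the Mosco parameter $\varepsilon$ (using the localized (M$_R$2.ii), for which $\supp\varphi_\varepsilon\subset B_R\subset\Omega_\varepsilon$ for $\varepsilon$ small) is needed and should be made explicit. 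These are repairable gaps rather than errors in strategy, but they are precisely the technicalities that the paper's Step 2 is designed to bypass.
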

\begin{thm}[Asymptotic expansion of the capacity, Navier case]\label{Thm_blowup_Nav}
  Let $N>4$ and $\Omega\subset\R^N$ be a bounded smooth domain with
  $0\in\Omega$.  Let $\{K_\varepsilon\}_{\varepsilon>0}$ be a
    family of compact sets uniformly concentrating to $\{0\}$
    satisfying (M1)-(M2) for some compact set $\cK$. Let $\lambda_J$
  be an eigenvalue of \eqref{eq_unperturbed} with Navier boundary
    conditions and $u_J\in H^2_\vartheta(\Omega)$ be a corresponding
  eigenfunction normalized in $L^2(\Omega)$. Then
	\begin{equation}\label{Thm_blowup_asympt_Nav}
		{\rm cap}_{2, \vartheta\!,\,\Omega}(K_\varepsilon,u_J)=\varepsilon^{N-4+2\gamma}\left(\capp_{2,\R^N}(\cK,U_0)+{\scriptstyle \mathcal{O}}(1)\right)
	\end{equation}
	as $\varepsilon\to0$, with $\gamma$ and $U_0$ as in \eqref{claim_U} with $m=2$.
\end{thm}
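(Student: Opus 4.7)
The plan is to adapt the blow-up argument of the Dirichlet version \textup{(Theorem \ref{Thm_blowup})} to the Navier setting, with the crucial new ingredient being the Hardy-Rellich characterization of \textup{Proposition \ref{HR_spaces}}, which compensates for the absence of a trivial-extension procedure. First I would set $W_\varepsilon := W_{K_\varepsilon, u_J} \in H^2_\vartheta(\Omega)$ and, following \eqref{claim_U} with $m=2$, define the rescaled functions
$$\tilde W_\varepsilon(x) := \varepsilon^{-\gamma} W_\varepsilon(\varepsilon x), \qquad U_\varepsilon(x) := \varepsilon^{-\gamma} u_J(\varepsilon x)$$
on $\Omega_\varepsilon := \varepsilon^{-1}\Omega$. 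A change of variables gives
$$\capp_{2,\vartheta,\Omega}(K_\varepsilon, u_J) = \varepsilon^{N-4+2\gamma} \int_{\Omega_\varepsilon} |\Delta \tilde W_\varepsilon|^2,$$
$\tilde W_\varepsilon$ is biharmonic on $\Omega_\varepsilon \setminus \varepsilon^{-1}K_\varepsilon$ with the Dirichlet matching $\tilde W_\varepsilon=U_\varepsilon$, $\partial_n \tilde W_\varepsilon=\partial_n U_\varepsilon$ on $\partial(\varepsilon^{-1}K_\varepsilon)$, while \textup{Proposition \ref{lem_shrinking}} ensures $\|\Delta \tilde W_\varepsilon\|^2_{L^2(\Omega_\varepsilon)} = O(1)$. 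The theorem then reduces to showing $\int_{\Omega_\varepsilon} |\Delta \tilde W_\varepsilon|^2 \to \capp_{2,\R^N}(\cK, U_0)$ as $\varepsilon\to 0$.

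The main obstacle, absent in the Dirichlet case, is that $W_\varepsilon \in H^2(\Omega) \cap H^1_0(\Omega)$ does not admit a trivial $H^2(\R^N)$-extension, so the weak limit of $\tilde W_\varepsilon$ must be placed in $D^{2,2}_0(\R^N)$ indirectly. To this end I would apply the Hardy-Rellich inequality \eqref{eq:hardy-inter_m=2} to $W_\varepsilon$ (using $0 \in \Omega$): after rescaling it yields the uniform bound
$$\int_{\Omega_\varepsilon} \frac{|\tilde W_\varepsilon|^2}{|x|^4} + \int_{\Omega_\varepsilon} \frac{|\nabla \tilde W_\varepsilon|^2}{|x|^2} \leq C \int_{\Omega_\varepsilon} |\Delta \tilde W_\varepsilon|^2 \leq C'.$$
Together with the $L^2$-bound on $\Delta \tilde W_\varepsilon$, these estimates provide uniform $H^2_{\mathrm{loc}}(\R^N)$-bounds. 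Extracting a subsequence with $\tilde W_\varepsilon \rightharpoonup W_0$ weakly in $H^2_{\mathrm{loc}}(\R^N)$ and invoking Fatou, I obtain $W_0 \in \cS^2(\R^N) = D^{2,2}_0(\R^N)$ thanks to \textup{Proposition \ref{HR_spaces}}.

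Next I would identify $W_0$ with the capacitary potential attaining $\capp_{2,\R^N}(\cK, U_0)$. For a cutoff $\eta \in C^\infty_0(B_R)$ with $\eta \equiv 1$ near $\cK$ and $R > r(M)$, the Dirichlet matching of $\tilde W_\varepsilon$ and $U_\varepsilon$ forces $\eta(\tilde W_\varepsilon - U_\varepsilon) \in H^2_0(B_R \setminus \varepsilon^{-1}K_\varepsilon)$ for $\varepsilon$ small, while $\eta(\tilde W_\varepsilon - U_\varepsilon) \rightharpoonup \eta(W_0 - U_0)$ in $H^2_0(B_R)$; condition (M$_R$2.i) of \textup{Lemma \ref{Mosco_equiv}} then yields $\eta(W_0 - U_0) \in H^2_0(B_R \setminus \cK)$. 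Letting $R \to \infty$ and invoking again the Hardy-Rellich decay, I deduce $W_0 - U_0 \in D^{2,2}_0(\R^N \setminus \cK)$. Passing to the limit in $\int \Delta \tilde W_\varepsilon \, \Delta \varphi = 0$ against arbitrary $\varphi \in C^\infty_0(\R^N \setminus \cK)$ shows that $W_0$ is biharmonic on $\R^N \setminus \cK$, whence $W_0$ is the capacitary potential for $\capp_{2,\R^N}(\cK, U_0)$ by the $\R^N$-analogue of \eqref{W_ku_pb}.

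To close the argument I would prove the two-sided asymptotic. Weak lower semicontinuity together with the admissibility of $W_0$ yield
$$\liminf_{\varepsilon \to 0} \int_{\Omega_\varepsilon} |\Delta \tilde W_\varepsilon|^2 \geq \int_{\R^N}|\Delta W_0|^2 = \capp_{2,\R^N}(\cK, U_0).$$
For the matching upper bound I would construct a recovery sequence via condition (M2.ii) and \textup{Lemma \ref{conv_cap_pot}}: approximate the capacitary potential minus $U_0$ in $D^{2,2}_0(\R^N)$-norm by functions in $D^{2,2}_0(\R^N \setminus \varepsilon^{-1}K_\varepsilon)$, multiply by a cutoff supported in a fixed ball so the support lies inside $\Omega_\varepsilon$ for $\varepsilon$ small, and rescale back to $\Omega$; this produces admissible competitors for $\capp_{2,\vartheta,\Omega}(K_\varepsilon, u_J)$, the Hardy-Rellich inequality absorbing the cutoff error, so that $\capp_{2,\vartheta,\Omega}(K_\varepsilon, u_J) \leq \varepsilon^{N-4+2\gamma}(\capp_{2,\R^N}(\cK,U_0) + o(1))$ as $\varepsilon\to 0$, completing the proof of \eqref{Thm_blowup_asympt_Nav}.
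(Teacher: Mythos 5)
Your proposal reproduces the paper's strategy in its essential steps: the rescaling of the capacitary potentials, the uniform bound on $\|\Delta\tW_\varepsilon\|_{L^2(\varepsilon^{-1}\Omega)}$, the Hardy--Rellich inequality of Theorem \ref{HR_ineq} standing in for the missing trivial-extension mechanism, the identification $\cS^2(\R^N)=D^{2,2}_0(\R^N)$ of Proposition \ref{HR_spaces} to place the weak limit in the right space, and the cutoff/Mosco argument showing that the limit is the capacitary potential for $\capp_{2,\R^N}(\cK,U_0)$. Where you diverge is the final convergence of $\int_{\varepsilon^{-1}\Omega}|\Delta\tW_\varepsilon|^2$: you use a two-sided $\liminf$/$\limsup$ scheme with an explicit recovery sequence built from (M2.ii), while the paper avoids recovery sequences altogether by noting that, since $\tW_\varepsilon-\eta U_\varepsilon$ is a test function in the equation for $\tW_\varepsilon$, one gets $\int_{\varepsilon^{-1}\Omega}|\Delta\tW_\varepsilon|^2=\int_{B_R}\Delta\tW_\varepsilon\,\Delta(\eta U_\varepsilon)$, which passes to the limit directly by weak--strong convergence and equals $\|\Delta\tW\|^2_{L^2(\R^N)}$ by the same orthogonality at the limit. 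Your route works but is heavier, and if you adopt it you must check explicitly that the rescaled-and-cut-off recovery competitors land in the admissible class $\{f\in H^2_\vartheta(\Omega):f-u_J\in H^2_{\vartheta,0}(\Omega\setminus K_\varepsilon)\}$, which is less automatic than in the Dirichlet case. Also a small slip to fix: $W_0-U_0$ cannot lie in $D^{2,2}_0(\R^N\setminus\cK)$ since $U_0$ is an unbounded homogeneous polynomial; the correct (and needed) statement is $W_0-\eta U_0\in D^{2,2}_0(\R^N\setminus\cK)$ with $\eta$ a cutoff equal to $1$ on a neighbourhood of $\cK\cup M$.
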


As a direct consequence, braiding together Theorem \ref{Prop_eigv_cap}
and Theorems \ref{Thm_blowup}-\ref{Thm_blowup_Nav} respectively, and
recalling that for $N\geq2m$ the point has null $V^m$-capacity by
Proposition \ref{Capacity_point}, we obtain Theorems
  \ref{asymptotic_exp_eigv_cap} and \ref{asymptotic_exp_eigv_cap_Nav} below.
\begin{thm}[Asymptotic expansion of perturbed eigenvalues, Dirichlet case]\label{asymptotic_exp_eigv_cap}
  Let $N>2m$ and $\Omega\subset\R^N$ be a bounded smooth domain
  containing $0$.  Let $\{K_\varepsilon\}_{\varepsilon>0}$ be a
    family of compact sets uniformly concentrating to $\{0\}$
    satisfying (M1)-(M2) for some compact set $\cK$. Let $\lambda_J$
  be a simple eigenvalue of \eqref{eq_unperturbed} with Dirichlet
    boundary conditions and let $u_J\in H^m_0(\Omega)$ be a
  corresponding eigenfunction normalized in $L^2(\Omega)$. Then
	\begin{equation}\label{expansion_eigv}
		\lambda_J(\Omega\setminus K_\varepsilon)=\lambda_J(\Omega)+\varepsilon^{N-2m+2\gamma}\left(\capp_{m,\R^N}(\cK,U_0)+{\scriptstyle \mathcal{O}}(1)\right)
	\end{equation}
	as $\varepsilon\to0$, with $\gamma$ and $U_0$ as in \eqref{claim_U}.
\end{thm}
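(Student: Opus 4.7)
The plan is to derive \eqref{expansion_eigv} directly by combining Theorem \ref{Prop_eigv_cap} with Theorem \ref{Thm_blowup}, once the hypotheses of the former are verified in the present setting. First I would check that $\{K_\varepsilon\}_{\varepsilon>0}$ concentrates to $K=\{0\}$ in the sense of Definition \ref{def_conv}: by (M1) and Remark \ref{rmk_MK} one has $K_\varepsilon\subset\overline{B_{C\varepsilon}(0)}$ for some $C>0$, so for every open neighbourhood $U$ of $\{0\}$ one has $K_\varepsilon\subset U$ for all $\varepsilon$ small enough. Next, since $N>2m$, Proposition \ref{Capacity_point} yields $\capVm(\{0\})=0$, so the limiting compact set has zero $V^m$-capacity as required by Theorem \ref{Prop_eigv_cap}. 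The simplicity hypothesis on $\lambda_J$ is part of the assumptions of the present theorem, so both hypotheses of Theorem \ref{Prop_eigv_cap} are met.

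Once these are in place, applying Theorem \ref{Prop_eigv_cap} gives
\begin{equation*}
\lambda_J(\Omega\setminus K_\varepsilon)-\lambda_J(\Omega)=\capVm(K_\varepsilon,u_J)+{\scriptstyle \mathcal{O}}(\capVm(K_\varepsilon,u_J))=\capVm(K_\varepsilon,u_J)\bigl(1+{\scriptstyle \mathcal{O}}(1)\bigr)
\end{equation*}
as $\varepsilon\to 0$, while Theorem \ref{Thm_blowup} yields
\begin{equation*}
\capVm(K_\varepsilon,u_J)=\varepsilon^{N-2m+2\gamma}\bigl(\capp_{m,\R^N}(\cK,U_0)+{\scriptstyle \mathcal{O}}(1)\bigr).
\end{equation*}
Substituting the second identity into the first and multiplying out the two parenthetical factors, the result is
\begin{equation*}
\lambda_J(\Omega\setminus K_\varepsilon)-\lambda_J(\Omega)=\varepsilon^{N-2m+2\gamma}\bigl(\capp_{m,\R^N}(\cK,U_0)+{\scriptstyle \mathcal{O}}(1)\bigr),
\end{equation*}
which is precisely \eqref{expansion_eigv}.

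Since both building blocks have already been established earlier in the paper, there is essentially no obstacle at this final step: the argument is a one-line substitution. All the technical content is hidden in Theorem \ref{Prop_eigv_cap}, where the eigenvalue perturbation is linked to the weighted capacity via a resolvent/variational argument, and in Theorem \ref{Thm_blowup}, where a blow-up analysis based on the rescaling $U_\varepsilon=\varepsilon^{-\gamma}u_J(\varepsilon\,\cdot)$ and the Mosco convergence of $D^{m,2}_0(\R^N\setminus\varepsilon^{-1}K_\varepsilon)$ to $D^{m,2}_0(\R^N\setminus\cK)$ extracts the explicit scaling rate $\varepsilon^{N-2m+2\gamma}$ together with the limiting capacity $\capp_{m,\R^N}(\cK,U_0)$. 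The only bookkeeping worth recording is that the expansion \eqref{expansion_eigv} retains its meaning as a genuine ${\scriptstyle \mathcal{O}}(\varepsilon^{N-2m+2\gamma})$-statement even when $\capp_{m,\R^N}(\cK,U_0)=0$, which is automatic from the uniform bound $\capVm(K_\varepsilon,u_J)=\mathcal{O}(\varepsilon^{N-2m+2\gamma})$ of Proposition \ref{lem_shrinking} combined with the little-$\mathcal{O}$ error produced by Theorem \ref{Prop_eigv_cap}.
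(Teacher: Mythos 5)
Your proof is correct and takes essentially the same route as the paper, which explicitly presents Theorem \ref{asymptotic_exp_eigv_cap} as a direct consequence of "braiding together Theorem \ref{Prop_eigv_cap} and Theorems \ref{Thm_blowup}--\ref{Thm_blowup_Nav}" and recalling Proposition \ref{Capacity_point}. Your careful check that (M1) implies concentration to $\{0\}$ in the sense of Definition \ref{def_conv}, and your remark on the degenerate case $\capp_{m,\R^N}(\cK,U_0)=0$, are sound additions but do not depart from the paper's argument.
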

\begin{thm}[Asymptotic expansion of perturbed eigenvalues, Navier case]\label{asymptotic_exp_eigv_cap_Nav}
  Let $N>4$ and $\Omega\subset~\!\!\R^N$ be a bounded smooth domain
  containing $0$.
Let $\{K_\varepsilon\}_{\varepsilon>0}$ be a
    family of compact sets uniformly concentrating to $\{0\}$
    satisfying (M1)-(M2) for some compact set $\cK$.
Let $\lambda_J$ be a simple eigenvalue of
  \eqref{eq_unperturbed}  with Navier
    boundary conditions and let $u_J\in H^2_\vartheta(\Omega)$ be a
  corresponding eigenfunction normalized in $L^2(\Omega)$. Then
	\begin{equation}\label{expansion_eigv_Nav}
		\lambda_J(\Omega\setminus K_\varepsilon)=\lambda_J(\Omega)+\varepsilon^{N-4+2\gamma}\left(\capp_{2,\R^N}(\cK,U_0)+{\scriptstyle \mathcal{O}}(1)\right)
	\end{equation}
	as $\varepsilon\to0$, with $\gamma$ and $U_0$ as in \eqref{claim_U} with $m=2$.
\end{thm}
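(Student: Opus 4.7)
The plan is to obtain Theorem \ref{asymptotic_exp_eigv_cap_Nav} as a direct consequence of two results already at our disposal: the general eigenvalue expansion in Theorem \ref{Prop_eigv_cap}, which converts the variation of a simple perturbed eigenvalue into a $(u_J,V^m)$-capacity, and Theorem \ref{Thm_blowup_Nav}, which identifies the sharp scaling rate of this capacity under uniform shrinking via the blow-up analysis. Hence no fundamentally new argument is needed; the task reduces to verifying that the hypotheses match and then composing the two asymptotic expansions.

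First I would check that Theorem \ref{Prop_eigv_cap} applies with limiting set $K=\{0\}$. By assumption (M1) and Remark \ref{rmk_MK}, we have $K_\varepsilon \subset \overline{B_{C\varepsilon}(0)}$ for some $C>0$ and $\varepsilon$ small, so that $\{K_\varepsilon\}$ concentrates to $\{0\}$ in the sense of Definition \ref{def_conv}. Since $N>4=2m$, Proposition \ref{Capacity_point} gives $\capmND(\{0\})=0$, and the simplicity of $\lambda_J(\Omega)$ is explicitly assumed. Theorem \ref{Prop_eigv_cap} then yields
\begin{equation*}
  \lambda_J(\Omega\setminus K_\varepsilon)
  =\lambda_J(\Omega)+\capmND(K_\varepsilon,u_J)
  +{\scriptstyle\mathcal{O}}\!\left(\capmND(K_\varepsilon,u_J)\right)
  \qquad\text{as }\varepsilon\to0.
\end{equation*}

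Next, Theorem \ref{Thm_blowup_Nav}, whose hypotheses coincide with those of the present statement, provides
\begin{equation*}
  \capmND(K_\varepsilon,u_J)
  =\varepsilon^{N-4+2\gamma}\!\left(\capp_{2,\R^N}(\cK,U_0)
  +{\scriptstyle\mathcal{O}}(1)\right).
\end{equation*}
The right-hand side is $\mathcal{O}(\varepsilon^{N-4+2\gamma})$, so the little-$o$ remainder in the eigenvalue expansion is absorbed into a term of order ${\scriptstyle\mathcal{O}}(\varepsilon^{N-4+2\gamma})$. Substituting, I would arrive at \eqref{expansion_eigv_Nav}. The only mild point to note is that the formula remains correct in the degenerate case $\capp_{2,\R^N}(\cK,U_0)=0$: then $\capmND(K_\varepsilon,u_J)={\scriptstyle\mathcal{O}}(\varepsilon^{N-4+2\gamma})$ and \eqref{expansion_eigv_Nav} simply reads $\lambda_J(\Omega\setminus K_\varepsilon)-\lambda_J(\Omega)={\scriptstyle\mathcal{O}}(\varepsilon^{N-4+2\gamma})$, which is exactly what the composition produces.

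There is no genuine obstacle at this stage; the real work has already been invested in Theorems \ref{Prop_eigv_cap} and \ref{Thm_blowup_Nav}. In particular the latter is the delicate step of the whole chain, since in the Navier setting one cannot extend capacitary potentials by zero across $\dOmega$, and identifying $D^{2,2}_0(\R^N\setminus\cK)$ as the correct functional space for the rescaled limit problem relies on the Hardy--Rellich inequality of Theorem \ref{HR_ineq} (through Proposition \ref{HR_spaces}). Once these ingredients are in place, the proof of Theorem \ref{asymptotic_exp_eigv_cap_Nav} is a one-line splicing of \eqref{formula_asympt} with \eqref{Thm_blowup_asympt_Nav}.
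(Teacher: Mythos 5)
Your proposal is correct and coincides with the paper's own argument: the authors state explicitly that Theorem \ref{asymptotic_exp_eigv_cap_Nav} follows by combining Theorem \ref{Prop_eigv_cap} with Theorem \ref{Thm_blowup_Nav}, using Proposition \ref{Capacity_point} for $\capVm(\{0\})=0$. Your verification of the hypotheses (concentration to $\{0\}$ via (M1), simplicity of $\lambda_J$, absorption of the little-$o$ remainder) is exactly the splicing they intend.
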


The proofs of Theorems \ref{Thm_blowup} and \ref{Thm_blowup_Nav} follow a similar structure. We proceed hence to prove them at once using the introduced unifying notation, detailing the differences when needed.

\begin{proof}[Proof of Theorems \ref{Thm_blowup}-\ref{Thm_blowup_Nav}]
	Motivated by \eqref{claim_U}, we define the analogously scaled potentials
	$$\widetilde W_\varepsilon:=\frac{W_\varepsilon(\varepsilon\cdot)}{\varepsilon^\gamma},\qquad\mbox{where}\quad W_\varepsilon:=W_{K_\varepsilon,u_J}.$$
	It is easy to verify that $\tW_\varepsilon$ is the capacitary potential for $U_\varepsilon$ in $\varepsilon^{-1}\Omega\setminus\eKe$, i.e.
	\begin{equation}\label{tW}
		\begin{cases}
			(-\Delta)^m\tW_\varepsilon=0\quad\mbox{in}\;\varepsilon^{-1}\Omega\setminus\eKe,\\
			\tW_\varepsilon\in V^m(\varepsilon^{-1}\Omega),\\
			\tW_\varepsilon-U_\varepsilon\in V^m_0(\varepsilon^{-1}\Omega\setminus\eKe),
		\end{cases}
	\end{equation}
	where $m=2$ in the Navier case. The first goal now is to prove
        that the so-rescaled capacitary potentials weakly converge to
        some function $\tW$ and prove that $\tW$ is a capacitary
        potential in $\R^N\setminus\cK$. To this aim, we need to
        distinguish between Dirichlet and Navier conditions on
        $\dOmega$. Indeed, by extension by zero outside the rescaled
        domains, in the first case it is rather natural to prove that
        the limit functional space is $D^{m,2}_0(\R^N\setminus\cK)$;
        that the same holds true in the Navier case is not evident and
        requires a finer analysis. A fundamental role in
        this second case is played by the Hardy-Rellich inequality
        discussed in Section \ref{Sec_Hardy}, which is however
        available just for $m=2$.  The two cases will converge then in
        the final step where the asymptotic expansions
      \eqref{Thm_blowup_asympt}--\eqref{Thm_blowup_asympt_Nav} are proved.

	\vskip0.2truecm
	\indent \textbf{Step 1 (Dirichlet case
          $\boldsymbol{V^m=H^m_0}$).}
        By (M1)  there exists
        $R>0$ such that, for $\varepsilon$ small enough, $\eKe\subset
      M\subset B_R(0)\subset\varepsilon^{-1}\Omega$, and hence,
      in view of Proposition \ref{Capacity_monotonicity},
	$$\capp_{m,\varepsilon^{-1}\Omega}(\eKe,U_\varepsilon)\leq\capp_{m,B_R(0)}(M,U_\varepsilon).$$
	Since $U_\varepsilon\to U_0$ in $H^m(B_R(0))$ by
        \eqref{claim_U}, applying Lemma \ref{conv_cap_pot} in
        $B_R(0)$, we infer that 
	$$\capp_{m,B_R(0)}(M,U_\varepsilon)\to\capp_{m,B_R(0)}(M,U_0)\quad\text{as
        }\varepsilon\to0.$$ This yields in particular that
        $\|\nabla^m\tW_\varepsilon\|_{L^2(\varepsilon^{-1}\Omega)}^2=\capp_{m,\varepsilon^{-1}\Omega}(\eKe,U_\varepsilon)$
        is bounded uniformly with respect to
        $\varepsilon$. Letting  $\tWEe$ be the
        extension by $0$ of $\tW_\varepsilon$ outside
        $\varepsilon^{-1}\Omega$, we have thus that 
        $\|\tWEe\|_{D^{m,2}_0(\R^N)}\leq C$. Since $D^{m,2}_0(\R^N)$
        is a Hilbert space, and so
        reflexive, 
for every sequence
  $\varepsilon_n\to0^+$ there exist a subsequence $\varepsilon_{n_k}$
  and $\tW\in D^{m,2}_0(\R^N)$ such that
	\begin{equation}\label{conv_tWe_DIR}
\tW^E_{\varepsilon_{n_k}}\rightharpoonup\tW\quad\mbox{weakly in }
D^{m,2}_0(\R^N)\text{ as }k\to\infty.
	\end{equation}
	We claim now that
        $\|\nabla^m\tW\|_2^2=\capp_{m,\R^N}(\cK,U_0)$.
	
	Let $\varphi\in C^\infty_0(\R^N\setminus\cK)$ and $R>r(M)$ be
        such that $\supp\,\varphi\subset B_R$, then by
        (M$_R$2-\textit{ii}) of Lemma \ref{Mosco_equiv}, one may find
        a family
        $\{\varphi_\varepsilon\}_{\varepsilon>0}\subset H^m_0(B_R)$
        such that $\varphi_\varepsilon\in H^m_0(B_R\setminus\eKe)$ and
        $\varphi_\varepsilon\to\varphi$ in $H^m_0(B _R)$ as
        $\varepsilon\to0$. In particular, for $\varepsilon$ small
        enough, one has that $B_R\subset\varepsilon^{-1}\Omega$, so
        $\varphi_\varepsilon$ may be taken as test function for the
        capacitary potential $\tW_\varepsilon$. Hence,
	\begin{align*}
	0&=\int_{\varepsilon_{n_k}^{-1}\Omega\setminus
          \varepsilon_{n_k}^{-1}K_{\varepsilon_{n_k}}}\nabla^m\widetilde
        W_{
          \varepsilon_{n_k}}\,\nabla^m\varphi_{\varepsilon_{n_k}}\\
        &=\int_{\R^N\setminus
          \varepsilon_{n_k}^{-1}K_{\varepsilon_{n_k}}}
\nabla^m\widetilde
        W^E_{ \varepsilon_{n_k}}\,\nabla^m\varphi_{\varepsilon_{n_k}}
       \to\int_{\R^N\setminus\cK}\nabla^m\tW\,\nabla^m\varphi\quad\text{as
          }k\to\infty
        \end{align*}
	by weak-strong convergence in $D^{m,2}_0(\R^N)$. 
	We are left to show that $\tW-\eta U_0\in
        D^{m,2}_0(\R^N\setminus\cK)$, for some cutoff
        function $\eta$ which is equal to $1$ in a neighbourhood of
        $\cK$. Let $\eta\in C^\infty_0(\R^N)$ be equal to $1$ on an
          open set $\mathcal U$ with $\cK\cup M\subset\mathcal U$;
        hence $\eta$ is also equal to $1$ on
        neighbourhoods of each $\eKe$ by (M1). Then
$\tW_\varepsilon^E-\eta U_\varepsilon\in
  D^{m,2}_0(\R^N\setminus\eKe)$ and
        $\tW^E_{\varepsilon_{n_k}}-\eta
        U_{\varepsilon_{n_k}}\rightharpoonup\tW-\eta U_0$ in
        $D^{m,2}_0(\R^N)$ as $k\to\infty$, and so by
        (M2-\textit{i}) one infers that $\tW-\eta U_0\in
        D^{m,2}_0(\R^N\setminus\cK)$.
	All in all, we deduce that $\tW$ is the capacitary potential relative to $\capp_{m,\R^N}(\cK,U_0)$, i.e.
	\begin{equation}\label{cap_Rn}
		\|\nabla^m\tW\|^2_{L^2(\R^N)}=\capp_{m,\R^N}(\cK,U_0).
	\end{equation}
Since the limit $\tW$ in \eqref{conv_tWe_DIR} depends neither on
  the sequence $\{\varepsilon_n\}$ nor on the subsequence
  $\{\varepsilon_{n_k}\}$, we conclude that
	\begin{equation}\label{conv_tWe_DIR-new}
\tW^E_{\varepsilon}\rightharpoonup\tW\quad\mbox{weakly in }
D^{m,2}_0(\R^N)\text{ as }\varepsilon\to0.
	\end{equation}
        \indent \textbf{Step 1 (Navier case
          $\boldsymbol{V^2=H^2_\vartheta}$).} We recall that here we
        are assuming $m=2$. The boundedness of
        $\|\Delta\tW_\varepsilon\|_{L^2(\varepsilon^{-1}\Omega)}$ with
        a constant independent of $\varepsilon$ follows  from the
        Dirichlet case, by recalling Proposition
        \ref{Capacity_monotonicity}(\textit{iii}). However, unlike the
        former case, one cannot now extend $\tW_\varepsilon$ to $0$
        outside $\varepsilon^{-1}\Omega$ and still obtain a function
        in $D^{2,2}_0(\R^N)$. To overcome this problem we rely on the
        Hardy-Rellich inequality proved in Theorem \ref{HR_ineq}. In
        fact, we have
	\begin{equation*}
		\int_{\varepsilon^{-1}\Omega}\frac{|\tW_\varepsilon|^2}{|x|^4}\,dx+\int_{\varepsilon^{-1}\Omega}\frac{|\nabla\tW_\varepsilon|^2}{|x|^2}\,dx\les\int_{\varepsilon^{-1}\Omega}|\Delta\tW_\varepsilon|^2\leq C
	\end{equation*}
	and therefore, by a diagonal process of extracted
        subsequences,
 for every sequence
  $\varepsilon_n\to0^+$ there exist a subsequence $\varepsilon_{n_j}$
  and $\tW\in H^2_{loc}(\R^N)$ for which
	\begin{equation}\label{conv_tWe_NAV}
		\frac{\nabla^{2-k}\tW_{\varepsilon_{n_j}}}{|x|^k}\rightharpoonup\frac{\nabla^{2-k}\tW}{|x|^k}\qquad\mbox{in}\ L^2(B_R)
	\end{equation}
	as $j\to\infty$ for any $R>0$ and $k\in\{0,1,2\}$. By
        weak lower semicontinuity of the norm, we infer that
	$$\int_{B_R}\frac{|\nabla^{2-k}\tW|^2}{|x|^{2k}}\,dx\leq\liminf_{j\to\infty}\int_{B_R}\frac{|\nabla^{2-k}\tW_{\varepsilon_{n_j}}|^2}{|x|^{2k}}\,dx\leq C,$$
	so that, letting $R\to+\infty$, 
	\begin{equation*}
	\int_{\R^N}\frac{|\nabla^{2-k}\tW|^2}{|x|^{2k}}\,dx\leq
          C\quad\text{for all }k\in\{0,1,2\}.
	\end{equation*}
	By Proposition \ref{HR_spaces}, this is equivalent to $\tW\in D^{2,2}_0(\R^N)$.
	
	It remains to prove that $\tW$ is the capacitary potential
        relative to $\capp_{2,\R^N}(\cK,U_0)$. Let $\eta$
          be as in the former case. Let $\varphi\in
          C^\infty_0(B_{1})$ be such that $\varphi\equiv1$ in $B_{1/2}(0)$ and consider the
        scaled functions $\varphi_R:=\varphi\big(\tfrac\cdot R\big)$
        with $R>r(M)$. Then
        $\varphi_R\big(\tW_{\varepsilon_{n_j}}-\eta
        U_{\varepsilon_{n_j}}\big)\rightharpoonup\varphi_R\big(\tW-\eta
        U_0\big)$ weakly in $H^2_0(B_R)$ and
        $\varphi_R(\tW_\varepsilon-\eta U_\varepsilon\big)\in
        H^2_0(B_R\setminus\eKe)$. By (M$_R$2-\textit{i}) we know then
        that
        $\varphi_R\big(\tW-\eta U_0\big)\in
        H^2_0(B_R\setminus\cK)$. Now we claim that
        $\varphi_R\big(\tW-\eta
          U_0\big)\to \tW-\eta U_0$ as $R\to+\infty$ in  $D^{2,2}_0(\R^N)$, thus
          concluding that $\tW-\eta U_0\in
        D^{2,2}_0(\R^N\setminus\cK)$. Indeed,
	\begin{equation*}
		\begin{split}
			\|\Delta\big(\left(\varphi_R-1\right)\big(\tW-\eta U_0\big)\big)\|_2^2&\les\|\Delta\varphi_R\big(\tW-\eta U_0\big)\|_2^2+\|\nabla\varphi_R\nabla\big(\tW-\eta U_0\big)\|_2^2\\
			&\quad+\|\left(\varphi_R-1\right)\Delta\big(\tW-\eta U_0\big)\|_2^2,
		\end{split}
	\end{equation*}
	where
	\begin{equation*}
		\|\left(\varphi_R-1\right)\Delta\big(\tW-\eta
                U_0\big)\|_2^2
                \leq\int_{\R^N\setminus B_{R/2}}|\Delta\big(\tW-\eta U_0\big)|^2\to 0
	\end{equation*}
	as $R\to+\infty$, and, for any $k\in\{1,2\}$,
	\begin{equation*}
		\begin{split}
			\|\nabla^k\varphi_R\nabla^{2-k}\big(\tW-\eta
                        U_0\big)\|_2^2&=\int_{\frac R2<|x|<R}\frac1{R^{2k}}\left|\big(\nabla^k\varphi\big)\left(\frac xR\right)\right|^2|\nabla^{2-k}\big(\tW-\eta U_0\big)|^2\,dx\\
			&\les \int_{\R^N\setminus B_R(0)}\frac{|\nabla^{2-k}\big(\tW-\eta U_0\big)|^2}{|x|^{2k}}\,dx\to0
		\end{split}
	\end{equation*}
	as $R\to+\infty$ since $\tW-\eta U_0\in D^{2,2}_0(\R^N)$ together with Proposition \ref{HR_spaces}.
	
	Next, we verify that
        \begin{equation}\label{eq:3}
          \int_{\R^N\setminus\cK}\Delta\tW\,\Delta\varphi=0\quad\text{for all
            $\varphi\in D^{2,2}_0(\R^N\setminus\cK)$}.
        \end{equation}
By density, it is enough to prove
        \eqref{eq:3} for all $\varphi \in
        C^\infty_0(\R^N\setminus\cK)$. Letting $\varphi \in
        C^\infty_0(\R^N\setminus\cK)$, there exist $R>r(M)$ and
        $\varepsilon_0>0$ such that 
        $\supp\,\varphi\subset B_R\subset\varepsilon^{-1}\Omega$ for all
        $\varepsilon<\varepsilon_0$, so that 
        $\varphi\in H^2_0(B_R\setminus\cK)$. By
        (M$_R$2-\textit{ii}) there exists a family
        $\{\varphi_\varepsilon\}_\varepsilon\subset H^2_0(B_R)$ such
        that $\varphi_\varepsilon\in H^2_0(B_R\setminus\eKe)$ and
        $\varphi_\varepsilon\to\varphi$ in $H^2_0(B_R)$. Hence,
	\begin{equation*}
          0=\int_{\varepsilon_{n_j}^{-1}\Omega}\Delta\tW_{\varepsilon_{n_j}}\Delta\varphi_{\varepsilon_{n_j}}=\int_{B_R}\Delta\tW_{\varepsilon_{n_j}}\Delta\varphi_{\varepsilon_{n_j}}\to\int_{B_R}\Delta\tW\,\Delta\varphi=\int_{\R^N\setminus\cK}\Delta\tW\,\Delta\varphi
	\end{equation*}
        as
          $j\to\infty$,
          by weak-strong convergence in $H^2_0(B_R)$.
We have thereby  proved the claim that $\tW$ is the capacitary
potential relative to $\capp_{2,\R^N}(\cK,U_0)$.
Since the limit $\tW$ in \eqref{conv_tWe_NAV} depends neither on
  the sequence $\{\varepsilon_n\}$ nor on the subsequence
  $\{\varepsilon_{n_k}\}$, we conclude that
the convergences in  \eqref{conv_tWe_NAV} actually hold as
$\varepsilon\to 0$, i.e.
	\begin{equation}\label{eq:4}
		\frac{\nabla^{2-k}\tW_{\varepsilon}}{|x|^k}\rightharpoonup\frac{\nabla^{2-k}\tW}{|x|^k}\qquad\mbox{in}\
                L^2(B_R)\quad\text{as }\varepsilon\to0\quad\text{for
                  all $R>0$ and $k\in\{0,1,2\}$}.
	\end{equation}

\vskip0.2truecm \indent \textbf{Step 2.} ($m=2$ in the Navier case,
$m\geq2$ in the Dirichlet). We aim now to prove the asymptotic expansions
\eqref{Thm_blowup_asympt}--\eqref{Thm_blowup_asympt_Nav}. As above, let $\eta\in C^\infty_0(\R^N)$ be equal to $1$ on an
open set $\mathcal U$ with $\cK\cup M\subset\mathcal U$.
Let $R>0$ be such that $\supp\,\eta\subset B_R$.
Since
$\tW_\varepsilon-\eta U_\varepsilon\in
V^m_0(\varepsilon^{-1}\Omega\setminus\eKe)$, by \eqref{tW} and \eqref{cap_Rn}, together with
\eqref{conv_tWe_DIR-new} or \eqref{eq:4}, we obtain that
	\begin{equation}\label{cap_Rn_convergence}
		\begin{split}
			\|\nabla^m\tW_\varepsilon\|^2_{L^2(\varepsilon^{-1}\Omega)}&=\int_{\varepsilon^{-1}\Omega}\nabla^m\tW_\varepsilon\,\nabla^m\left(\eta U_\varepsilon\right)=\int_{B_R}\nabla^m\tW_\varepsilon\,\nabla^m\!\left(\eta U_\varepsilon\right)\\
			&\to\int_{\R^N}\nabla^m\tW\,\nabla^m\!\left(\eta U_0\right)=\|\nabla^m\tW\|^2_{L^2(\R^N)}
		\end{split}
	\end{equation}
	as $\varepsilon\to0$ by weak-strong convergence. On the other
        hand, by rescaling one has that
	\begin{equation}\label{cap_rescaled}
		\begin{split}
			\|\nabla^m\tW_\varepsilon\|^2_{L^2(\varepsilon^{-1}\Omega)}&=\frac1{\varepsilon^{2\gamma}}\int_{\varepsilon^{-1}\Omega}\left|\nabla^m\!\left(W_\varepsilon(\varepsilon x)\right)\right|^2\,dx=\varepsilon^{-N+2m-2\gamma}\intOmega|\nabla^m W_\varepsilon(y)|^2\,dy\\
			&=\varepsilon^{-N+2m-2\gamma}\capVm(K_\varepsilon,u_J).
		\end{split}
	\end{equation}
	Hence, from \eqref{cap_Rn} and \eqref{cap_Rn_convergence}-\eqref{cap_rescaled} we finally infer
that	\begin{equation*}
          \capVm(K_\varepsilon,u_J)=
          \varepsilon^{N-2m+2\gamma}	\|\nabla^m\tW_\varepsilon\|^2_{L^2(\varepsilon^{-1}\Omega)}=\varepsilon^{N-2m+2\gamma}\left(\capp_{m,\R^N}(\cK,U_0)+{\scriptstyle \mathcal{O}}(1)\right)
	\end{equation*}
        as $\varepsilon\to0$.
\end{proof}

\subsection{Sufficient conditions for a sharp asymptotic expansion}\label{Sec_suffcond}
Looking at the asymptotic expansions we have found in Theorems
\ref{asymptotic_exp_eigv_cap}-\ref{asymptotic_exp_eigv_cap_Nav},  one
may ask whether the results are sharp, in the sense that the vanishing
rate of the eigenvalue variation $\lambda_J(\Omega\setminus
K_\varepsilon)-~\!\lambda_J(\Omega)$ is equal to
$N-2m+2\gamma$. The next results provide sufficient conditions on
$\cK$ and $U_0$ in order to ensure that  $\capp_{m,\R^N}(\cK,U_0)\neq0$.
\begin{prop}\label{sharp_asympt}
  Under the assumptions of Theorems \ref{asymptotic_exp_eigv_cap} or
  \ref{asymptotic_exp_eigv_cap_Nav}, suppose that the Lebesgue measure
  of $\cK$ is positive. Then
	\begin{equation*}
		\lim_{\varepsilon\to0}\frac{\lambda_J(\Omega\setminus K_\varepsilon)-\lambda_J(\Omega)}{\varepsilon^{N-2m+2\gamma}}=\capp_{m,\R^N}(\cK,U_0)>0.
	\end{equation*}
\end{prop}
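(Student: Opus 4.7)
The plan is to establish Proposition~\ref{sharp_asympt} by combining Theorems~\ref{asymptotic_exp_eigv_cap}--\ref{asymptotic_exp_eigv_cap_Nav}, whose asymptotic expansions \eqref{expansion_eigv}--\eqref{expansion_eigv_Nav} directly yield
\[
\lim_{\varepsilon\to0}\frac{\lambda_J(\Omega\setminus K_\varepsilon)-\lambda_J(\Omega)}{\varepsilon^{N-2m+2\gamma}}=\capp_{m,\R^N}(\cK,U_0),
\]
with the strict positivity $\capp_{m,\R^N}(\cK,U_0)>0$. Since the limit itself is supplied for free by the two theorems above, the whole argument reduces to establishing this strict positivity under the assumption $|\cK|>0$.

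I would argue by contradiction, assuming $\capp_{m,\R^N}(\cK,U_0)=0$. I would first fix a cutoff $\eta\in C^\infty_0(\R^N)$ equal to $1$ on a neighbourhood of $\cK$, so that $\eta U_0\in C^\infty_0(\R^N)\subset D^{m,2}_0(\R^N)$. By definition of the capacity \eqref{capp_RN}, there exists a minimizing sequence $\{f_n\}\subset D^{m,2}_0(\R^N)$ with $f_n-\eta U_0\in D^{m,2}_0(\R^N\setminus\cK)$ and $\|\nabla^m f_n\|_{L^2(\R^N)}\to 0$. Since $\|\nabla^m\cdot\|_{L^2(\R^N)}$ is precisely the norm of $D^{m,2}_0(\R^N)$, this gives $f_n\to 0$ strongly in $D^{m,2}_0(\R^N)$; as $D^{m,2}_0(\R^N\setminus\cK)$ is a closed subspace, passing to the limit in $f_n-\eta U_0$ yields $\eta U_0\in D^{m,2}_0(\R^N\setminus\cK)$.

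The decisive step, which I regard as the heart of the argument, is to convert this functional membership into pointwise information on $\cK$. By definition of $D^{m,2}_0(\R^N\setminus\cK)$, I can pick a sequence $v_n\in C^\infty_0(\R^N\setminus\cK)$ with $v_n\to \eta U_0$ in $D^{m,2}_0(\R^N)$; the critical Sobolev inequality \eqref{eq:sob-ineq} with $j=0$ upgrades this convergence to $v_n\to \eta U_0$ in $L^{2^*_m}(\R^N)$, and hence, along a subsequence, a.e. on $\R^N$. Since each $v_n$ vanishes identically in an open neighbourhood of $\cK$, it follows that $\eta U_0=0$ a.e. on $\cK$, and because $\eta\equiv1$ near $\cK$ this forces $U_0=0$ a.e. on $\cK$.

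The conclusion then rests on an algebraic rigidity observation: being a nonzero $\gamma$-homogeneous polyharmonic polynomial, $U_0$ is a nontrivial polynomial on $\R^N$, so its zero set has Lebesgue measure zero. This contradicts $|\cK|>0$, proving $\capp_{m,\R^N}(\cK,U_0)>0$ and closing the argument. The main conceptual obstacle is the middle paragraph, i.e.\ extracting the pointwise vanishing $U_0|_{\cK}=0$ from the purely variational hypothesis $\capp_{m,\R^N}(\cK,U_0)=0$; once this is in hand, the polynomial rigidity trivializes the rest.
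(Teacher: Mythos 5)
Your proposal is correct, and it takes a genuinely different route to the key step from the paper's proof. Both arguments reduce Proposition~\ref{sharp_asympt} to showing $\capp_{m,\R^N}(\cK,U_0)>0$ and both argue by contradiction towards the conclusion $U_0=0$ a.e.\ on $\cK$, which is then impossible since $U_0$ is a nontrivial polynomial. The difference lies in how the a.e.\ vanishing on $\cK$ is extracted. The paper works with the capacitary potential $W^{(0)}_{\cK}$: from $\|\nabla^m W^{(0)}_{\cK}\|_{L^2(\R^N)}=0$ and the Hardy inequality for the polyharmonic operator (Davies--Hinz), it deduces $\int_{\cK}|W^{(0)}_{\cK}|^2/|x|^{2m}\,dx=0$, and then invokes the identification $W^{(0)}_{\cK}\equiv U_0$ on $\cK$, a statement that itself silently relies on the fact that elements of $D^{m,2}_0(\R^N\setminus\cK)$ vanish a.e.\ on $\cK$. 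Your argument sidesteps both the Hardy inequality and the explicit mention of the capacitary potential: you close the contradiction hypothesis by showing $\eta U_0\in D^{m,2}_0(\R^N\setminus\cK)$ directly, and then unfold the definition of that space to get a sequence $v_n\in C^\infty_0(\R^N\setminus\cK)$ converging in $D^{m,2}_0$, upgraded via the critical Sobolev embedding \eqref{eq:sob-ineq} to a.e.\ convergence along a subsequence, whence $U_0=0$ a.e.\ on $\cK$. This is arguably more self-contained, since it makes the crucial ``membership implies pointwise vanishing'' step explicit rather than folding it into the identity $W^{(0)}_{\cK}\equiv U_0$ on $\cK$, at the modest cost of passing through the Sobolev inequality and subsequence extraction; the paper's route is shorter on the page but imports the Hardy inequality and the implicit trace-type identification. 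Both routes need $N>2m$, which is already assumed.
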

\begin{proof}
  Denote by $W^{(0)}_{\cK}$ the capacitary potential for
  $\capp_{m,\R^N}(\cK,U_0)$ and suppose by contradiction that
  $\capp_{m,\R^N}(\cK,U_0)=0$. Then, by the Hardy inequality
  for the polyharmonic operator (see \cite[Theorem 12]{DH}), there
  exists a constant $c=c(N,m)$ such that
	\begin{equation*}
		0=\|\nabla^mW^{(0)}_{\cK}\|_{L^2(\R^N)}^2\geq c\int_{\R^N}\frac{|W^{(0)}_{\cK}|^2}{|x|^{2m}}\,dx\geq c\int_{\cK}\frac{|W^{(0)}_{\cK}|^2}{|x|^{2m}}\,dx= c\int_{\cK}\frac{|U_0|^2}{|x|^{2m}}\,dx
	\end{equation*}
	since $W^{(0)}_{\cK}\equiv U_0$ on $\cK$. Since $|\cK|>0$,
        this readily implies that $U_0$ vanishes a.e. on $\cK$. However, by construction, $U_0$ is a polyharmonic polynomial on $\R^N$ which is not identically zero (see \cite[Sec.4 Theorem 1]{Bers}), so 
	it cannot vanish on a set of positive
        measure (since nontrivial analytic functions cannot
          vanish on positive measure sets). This, together with
          Theorems \ref{asymptotic_exp_eigv_cap} and 
  \ref{asymptotic_exp_eigv_cap_Nav}, concludes the proof.
\end{proof}

The next results apply to some specific situations in which, although $K$ has vanishing Lebesgue measure, one may anyway have that $\capp_{m,\R^N}(\cK,U_0)\neq0$. 

\begin{prop}\label{suff_cond_nonvanishing}
	Let $N>2m$ and $\cK\subset\R^N$ be a compactum with
        $\capp_{m,\R^N}(\cK)>0$. Suppose moreover that
        $u_J(0)\neq0$. Then, in the setting of Theorems
        \ref{asymptotic_exp_eigv_cap} or
        \ref{asymptotic_exp_eigv_cap_Nav}, we have that
	\begin{equation}\label{expansion_eigv_gamma0}
		\lambda_J(\Omega\setminus K_\varepsilon)=\lambda_J(\Omega)+\varepsilon^{N-2m}u_J^2(0)\capp_{m,\R^N}(\cK)+{\scriptstyle \mathcal{O}}(\varepsilon^{N-2m})
	\end{equation}
	as $\varepsilon\to0$.
\end{prop}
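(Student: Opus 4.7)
\medskip

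\textbf{Proof plan.} The plan is to reduce the statement to Theorems~\ref{asymptotic_exp_eigv_cap}--\ref{asymptotic_exp_eigv_cap_Nav} by identifying the vanishing order $\gamma$ and the blow-up profile $U_0$ under the assumption $u_J(0)\neq0$, and then exploiting the quadratic scaling of the capacity with respect to the boundary datum.

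First I would observe that the normalization \eqref{claim_U} combined with the continuity of $u_J$ at $0$ forces $\gamma=0$ and $U_0\equiv u_J(0)$. Indeed, if $\gamma\geq 1$, then the rescaled functions $U_\varepsilon=\varepsilon^{-\gamma}u_J(\varepsilon\,\cdot)$ would satisfy $U_\varepsilon(0)=\varepsilon^{-\gamma}u_J(0)\to\pm\infty$, contradicting the $H^m$-convergence (and hence, by Sobolev embedding since $N>2m$, the $L^{2^*_m}$-convergence, or by analyticity the locally uniform convergence) to a polynomial $U_0$. Thus $\gamma=0$ and the limit is the constant polynomial $U_0\equiv u_J(0)$.

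Next I would establish the scaling identity
\begin{equation*}
\capp_{m,\R^N}(\cK,c)=c^2\,\capp_{m,\R^N}(\cK)\qquad\text{for every constant }c\in\R.
\end{equation*}
This follows from the paper's convention that the $(u,V^m)$-capacity depends only on $u$ in a neighbourhood of $\cK$, so $\capp_{m,\R^N}(\cK,c)=\capp_{m,\R^N}(\cK,c\,\eta_{\cK})$ for any admissible cutoff $\eta_{\cK}$. Since the map $f\mapsto cf$ is a bijection between the affine sets $\{f\in D^{m,2}_0(\R^N):f-\eta_{\cK}\in D^{m,2}_0(\R^N\setminus\cK)\}$ and $\{g\in D^{m,2}_0(\R^N):g-c\eta_{\cK}\in D^{m,2}_0(\R^N\setminus\cK)\}$, and $\|\nabla^m(cf)\|_2^2=c^2\|\nabla^m f\|_2^2$, the claim follows by taking the infimum.

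Finally I would apply Theorem~\ref{asymptotic_exp_eigv_cap} in the Dirichlet case (or Theorem~\ref{asymptotic_exp_eigv_cap_Nav} in the Navier case, for which $m=2$) with $\gamma=0$ and $U_0\equiv u_J(0)$, obtaining
\begin{equation*}
\lambda_J(\Omega\setminus K_\varepsilon)-\lambda_J(\Omega)
=\varepsilon^{N-2m}\bigl(\capp_{m,\R^N}(\cK,u_J(0))+{\scriptstyle \mathcal{O}}(1)\bigr)
=\varepsilon^{N-2m}u_J^2(0)\,\capp_{m,\R^N}(\cK)+{\scriptstyle \mathcal{O}}(\varepsilon^{N-2m})
\end{equation*}
as $\varepsilon\to 0$, which is precisely \eqref{expansion_eigv_gamma0}. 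The hypothesis $\capp_{m,\R^N}(\cK)>0$ together with $u_J(0)\neq 0$ guarantees that the leading term does not vanish, so the expansion is sharp. No significant obstacle is expected here, since once the identification $U_0\equiv u_J(0)$ is in place, the rest is a direct consequence of the results established previously and of the elementary quadratic scaling of the capacity.
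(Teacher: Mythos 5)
Your proof is correct and takes essentially the same route as the paper's: identify $\gamma=0$ and $U_0\equiv u_J(0)$ from $u_J(0)\neq0$, use the quadratic scaling $\capp_{m,\R^N}(\cK,c)=c^2\capp_{m,\R^N}(\cK)$, and plug into Theorems~\ref{asymptotic_exp_eigv_cap}--\ref{asymptotic_exp_eigv_cap_Nav}. The paper states the two key facts ($\gamma=0$, quadratic homogeneity) without justification, whereas you spell them out; the content and structure are otherwise identical.
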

We mention that an expansion of type
  \eqref{expansion_eigv_gamma0} was obtained in \cite[Theorem
  1.4]{Courtois} and \cite[Theorem 1.7]{AFHL} in the case $N=2m=2$, in
  which the vanishing rate of the eigenvalue variation is logarithmic.
\begin{proof}[Proof of Proposition \ref{suff_cond_nonvanishing}]
	Since $\{K_\varepsilon\}_{\varepsilon>0}$ is concentrating at $\{0\}$ and $u_J(0)\neq0$, then the degree $\gamma$ of the polynomial $U_0$ is $0$, and $U_0=u_J(0)$. It is then easy to see that
	$$\capp_{m,\R^N}(\cK,u_J(0))=u_J^2(0)\capp_{m,\R^N}(\cK)>0,$$
        so that \eqref{expansion_eigv} and \eqref{expansion_eigv_Nav}
        can be rewritten as in \eqref{expansion_eigv_gamma0}.
\end{proof}

In the case $u_J(0)=0$, the next result, inspired by \cite[Lemma
3.11]{FNO}, may be useful. It tells that, if the
compactum $\cK$ and the null-set of the polynomial $U_0$ are
``transversal enough'', then again $\capp_{m,\R^N}(\cK,U_0)>0$.
\begin{prop}\label{Prop_transv}
  Let $N>2m$ and $\cK\subset\R^N$ be a compactum with
  $\capp_{m,\R^N}(\cK)>0$. Letting $f\in~\!\!C^\infty(\R^N)$, let us
  consider the set $Z_f^{\cK}:=\{x\in \cK\,|\,f(x)=0\}$. If
  $\capp_{m,\R^N}(Z_f^{\cK})<\capp_{m,\R^N}(\cK)$, then
  $\capp_{m,\R^N}(\cK,f)>0$.
\end{prop}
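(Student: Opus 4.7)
The plan is to argue by contradiction. Assume $\capp_{m,\R^N}(\cK,f)=0$; since the infimum in \eqref{capp_RN} is attained at the unique capacitary potential $W_{\cK,f}$, this forces $W_{\cK,f}\equiv 0$, equivalently $\eta_\cK f\in D^{m,2}_0(\R^N\setminus\cK)$, where $\eta_\cK\in C^\infty_0(\R^N)$ is a fixed cutoff equal to $1$ in a neighbourhood of $\cK$. The strategy is to show that this inclusion forces $\cK\setminus Z_f^\cK$ to have zero $(m,2)$-capacity, and then to conclude via a subadditivity-type argument that $\capp_{m,\R^N}(\cK)\leq\capp_{m,\R^N}(Z_f^\cK)$, contradicting the hypothesis.

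The core step is to show that every compact subset $L\subset\cK\setminus Z_f^\cK$ satisfies $\capp_{m,\R^N}(L)=0$. Since $f\in C^\infty(\R^N)$ does not vanish on the compactum $L$, one finds an open neighbourhood $V$ of $L$ with $|f|\geq c>0$ on $\overline{V}$, and a function $F\in C^\infty_0(\R^N)$ with $F\equiv 1/f$ on $V$. The key observation is that multiplication by a smooth compactly supported function preserves $D^{m,2}_0(\R^N\setminus\cK)$: this follows from Leibniz's rule combined with the critical Sobolev inequality \eqref{eq:sob-ineq} (valid for $N>2m$), exactly as in the argument immediately following \eqref{eq:dmek}. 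Consequently $F\eta_\cK f\in D^{m,2}_0(\R^N\setminus\cK)\subset D^{m,2}_0(\R^N\setminus L)$. Fixing a cutoff $\eta_L\in C^\infty_0(V)$ equal to $1$ near $L$, the product $\eta_L\,F\eta_\cK f$ still belongs to $D^{m,2}_0(\R^N\setminus L)$, while the difference $\eta_L-\eta_L\,F\eta_\cK f=\eta_L(1-F\eta_\cK f)$ lies in $C^\infty_0(\R^N\setminus L)$, since $F\eta_\cK f\equiv 1$ on $V\supset\supp\eta_L$. Summing these two pieces one gets $\eta_L\in D^{m,2}_0(\R^N\setminus L)$, so $g\equiv 0$ is admissible in \eqref{capp_RN} for $\capp_{m,\R^N}(L)$, yielding $\capp_{m,\R^N}(L)=0$.

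To conclude, take the compact exhaustion $L_n:=\{x\in\cK:|f(x)|\geq 1/n\}$, which satisfies $\bigcup_n L_n=\cK\setminus Z_f^\cK$. By Lemma~\ref{Lemma_Mazya_equivalence} each $L_n$ has zero $\Capp$-capacity, and the countable subadditivity of the Choquet capacity $\Capp$ (see \cite[Sec.~13.1]{M}) yields $\Capp(\cK\setminus Z_f^\cK)=0$. Invoking once more Lemma~\ref{Lemma_Mazya_equivalence}, together with the fact that Borel sets of zero $(m,2)$-capacity do not affect the condenser capacity of any superset, one obtains $\capp_{m,\R^N}(\cK)=\capp_{m,\R^N}(Z_f^\cK)$, contradicting the hypothesis. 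The main delicate point is precisely this last additivity step: since $\capp_{m,\R^N}$ is not manifestly subadditive, one routes the argument through the subadditive $\Capp$ of \eqref{Capp} and exploits the equivalence of Lemma~\ref{Lemma_Mazya_equivalence}, the underlying fact being that a function in $D^{m,2}_0(\R^N)$ has a quasi-continuous representative which vanishes quasi-everywhere on sets of null capacity, a standard tool in the theory of $(m,2)$-capacity (see \cite{M,AH}).
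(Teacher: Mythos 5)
Your proof takes a genuinely different route from the paper's. The paper never argues by contradiction on $\capp_{m,\R^N}(\cK,f)$: it peels off a compact $\cK_n=\cK\setminus\cU_n$ of positive capacity away from a neighbourhood $\cU_n$ of $Z_f^\cK$ (using subadditivity plus Lemma~\ref{capacity_right_continuity}), splits it into $\cK_n^\pm$ according to the sign of $f$, and directly bounds $\capp_{m,\R^N}(\cK,f)\geq c\,(c_n^+)^2\capp_{m,\R^N}(\cK_n^+)>0$ by comparing to $\Capp$ via Lemma~\ref{Lemma_Mazya_equivalence}. In contrast, your first step --- showing that $\capp_{m,\R^N}(\cK,f)=0$ forces $\capp_{m,\R^N}(L)=0$ for every compact $L\subset\cK\setminus Z_f^\cK$, by multiplying $\eta_\cK f\in D^{m,2}_0(\R^N\setminus\cK)$ by a smooth local inverse of $f$ --- is correct (modulo shrinking $V$ so that $\eta_\cK\equiv1$ there) and quite elegant; it localises the degeneracy in a way the paper does not.

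The concluding step, however, has a genuine gap. You want to infer $\capp_{m,\R^N}(\cK)=\capp_{m,\R^N}(Z_f^\cK)$ from the vanishing of the capacity of $\cK\setminus Z_f^\cK$, but the route you indicate does not carry you there. Lemma~\ref{Lemma_Mazya_equivalence} only provides $c\,\capp_{m,\R^N}\leq\Capp\leq\capp_{m,\R^N}$ with an unspecified $c\leq1$; from $\Capp(\cK)=\Capp(Z_f^\cK)$ (which the subadditivity of $\Capp$ does give) you get only $\capp_{m,\R^N}(\cK)\leq c^{-1}\capp_{m,\R^N}(Z_f^\cK)$, which does not contradict the strict inequality in the hypothesis unless $c=1$, an open question as the paper notes. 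The appeal to quasi-continuity of $D^{m,2}_0$-functions is also too thin: the characterisation of membership in $D^{m,2}_0(\R^N\setminus K)$ via quasi-everywhere vanishing of derivatives requires a family of capacities of orders $1,\dots,m$ (cf.\ Adams--Hedberg), not only the $m$-th order one whose null sets you have controlled. The gap is fillable: starting from your first step, one can prove $D^{m,2}_0(\R^N\setminus Z_f^\cK)=D^{m,2}_0(\R^N\setminus\cK)$ directly, without any quasi-continuity, by taking $\varphi\in C^\infty_0(\R^N\setminus Z_f^\cK)$, setting $L:=\supp\varphi\cap\cK$ (compact, contained in $\cK\setminus Z_f^\cK$, hence of zero capacity), choosing smooth admissible $w_i\equiv1$ near $L$ with $\|\nabla^m w_i\|_2\to0$, and observing that $\varphi(1-w_i)\in C^\infty_0(\R^N\setminus\cK)$ and $\|\nabla^m(\varphi w_i)\|_2\to0$ by the Leibniz--Sobolev estimate following \eqref{eq:dmek}; since then the admissible classes for $\capp_{m,\R^N}(\cK)$ and $\capp_{m,\R^N}(Z_f^\cK)$ coincide (the same cutoff $\eta_\cK$ serves both), the two capacities are equal and the contradiction follows. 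As written, though, the final implication is asserted rather than proved.
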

\begin{proof}
  Let $\{\cU_n\}$ be a sequence of nested open sets in $\R^N$ so that
  $Z_f^{\cK}\subset\cU_n$ for all $n\in \N$ and 
  $Z_f^{\cK}=\bigcap_{n\in\N}\overline{\cU_n}$ and let
  $\cK_n:=\cK\setminus\cU_n$, which is a sequence of compact sets. By
  subadditivity and monotonicity of the capacity (see
    e.g. \cite{M}) one has that 
\begin{equation*}
		\capp_{m,\R^N}(\cK)\leq\capp_{m,\R^N}(\cK_n)+\capp_{m,\R^N}(\overline{\cU_n}).
	\end{equation*}
	Moreover, fixing
        $0<\delta<\capp_{m,\R^N}(\cK)-\capp_{m,\R^N}(Z_f^{\cK})$, one
        may find a neighbourhood $\cU(Z_f^{\cK})$ such that one has
        $\overline{\cU_n}\subset\cU(Z_f^{\cK})$ by construction and
        $\capp_{m,\R^n}(\overline{\cU_n})\leq\capp_{m,\R^n}(Z_f^{\cK})+\delta$
        by Lemma \ref{capacity_right_continuity}, provided $n$ is
        large enough. This implies
	\begin{equation}\label{capp_Kn+_pos}
		\capp_{m,\R^N}(\cK_n)\geq\capp_{m,\R^N}(\cK)-\capp_{m,\R^N}(Z_f^{\cK})-\delta>0
	\end{equation}
	for $n$ large enough. We define 
        $\cK_n^+:=\{ x\in\cK_n:f(x)>0\}$ and 
        $\cK_n^-:=\{ x\in\cK_n:f(x)<0\}$ for all $n\in\N$.  Noticing that $\cK_n$
        is the union of $\cK_n^+$ and $\cK_n^-$, necessarily
either $\capp_{m,\R^N}(\cK_n^+)>0$ or
        $\capp_{m,\R^N}(\cK_n^-)>0$; let us e.g. consider the case 
        $\capp_{m,\R^N}(\cK_n^+)>0$. By regularity of $f$ and
        since $\cK_n^+$ is compact, then $c_n^+:=\inf_{\cK_n^+}f$ is
        attained and strictly positive. Take now any
        $u_n\in D^{m,2}_0(\R^N)$ so that
        $u_n-\eta_{\cK_n^+}f\in D^{m,2}_0(\R^N\setminus\cK_n^+)$ and
        define $v_n:=\frac{u_n}{c_n^+}$. Then it is clear that
        $v_n\in D^{m,2}_0(\R^N)$ and $v_n\geq1$ a.e. on
        $\cK_n^+$. Hence,
	\begin{equation*}
          \Capp(\cK_n^+)\leq\int_{\R^N}|\nabla^mv_n|^2
          =\frac1{\left(c_n^+\right)^2}\int_{\R^N}|\nabla^mu_n|^2.
	\end{equation*}
	By arbitrariness of $u_n$ this yields
        $\left(c_n^+\right)^2\Capp(\cK_n^+)\leq
        \capp_{m,\R^N}(\cK_n^+,f)\leq\capp_{m,\R^N}(\cK,f)$, since
        $\cK_n^+\subset \cK$ for all $n\in\N$. Using now the
        equivalence of the capacities in $\R^N$ stated in Lemma
        \ref{Lemma_Mazya_equivalence}, one infers that
	\begin{equation*}
		\capp_{m,\R^N}(\cK,f)\geq c\left(c_n^+\right)^2\capp_{m,\R^N}(\cK_n^+)>0
	\end{equation*}
	by \eqref{capp_Kn+_pos}. This concludes the proof.
\end{proof}
\begin{remark}
  In view of Remark \ref{Remark_manifolds} it is immediate to see
  that, if $\capp_{m,\R^N}(\cK)>0$ and $Z_{U_0}^{\cK}$ has dimension
  $d\leq N-2m$, then the assumptions of Proposition \ref{Prop_transv}
  are fulfilled, thus ensuring that $\capp_{m,\R^N}(\cK,U_0)>0$.
\end{remark}

\section{Open problems}\label{Sec_OP}
We finally discuss possible generalizations and questions which are left open by our analysis and which we believe of interest.

\paragraph{Higher-order Navier setting.} The results in Section
\ref{Sec_conv_eigv} for the Navier setting are obtained in the general
case $m\geq2$. On the other hand, Theorem \ref{Thm_blowup_Nav} and its
consequent Theorem \ref{asymptotic_exp_eigv_cap_Nav} are established
only for $m=2$. The main difficulty in their extension to higher
  orders relies in the characterization of homogeneous Sobolev
spaces via Hardy-Rellich inequalities. In our argument this was
  necessary to compensate for the lack of a trivial extension, which
  is instead available in the
Dirichlet setting. Although we envision that a generalization of the
Hardy-Rellich inequality of Proposition \ref{HR_ineq} is reachable,
the extension of the characterization
contained in Proposition \ref{HR_spaces} seems to be a non
  trivial problem. Indeed, for $m=2$ the only intermediate derivative is the gradient and
$\nabla u=Du$; on the other hand for $m\geq3$ the Hardy-Rellich
inequality would provide a weighted estimate on the derivatives
$\nabla^ku$, $k\in\{1,\dots,m-1\}$, while one would need to estimate
the full tensor of the derivatives $D^ku$ to be able to conclude that
$u\in
D^{m,2}_0(\R^N)$.

\paragraph{Small dimensions.} Most of our results deal with the high
dimensional case $N\geq2m$, because the concentration of the family of
sets $\{K_\varepsilon\}_{\varepsilon>0}$ to a zero $V^m$-capacity
compact set was needed. Recall that for $N<2m$ all compact sets are of
positive capacity, see Proposition \ref{Capacity_point}.
Nevertheless, in order to prove that the asymptotic expansions given
by Theorem \ref{Prop_eigv_cap} are sharp, in Theorems
\ref{Thm_blowup_easycase} and \ref{Thm_blowup_easycase_Nav} we have to
restrict to $N>2m$. The conformal case $N=2m$ seems not to be tratable
with the blow-up analysis, not only due to the different
characterization of the spaces $D^{m,2}_0(\R^N)$ and the use of
Hardy-Rellich inequalities, but also because the $m$-capacity
$\capp_{m,\R^{2m}}(K)$ of any compact set $K$ in $\R^{2m}$ is null
(see \cite {M1}). A
different approach for conformal (and smaller!)  dimensions should be
in fact developed and we expect that the expansion involves the
logarithm of the diameter of the shrinking sets, in analogy with the
results in \cite{AFHL} for the case $m=1$. We remark in particular
that, when our results are applied to the biharmonic operator,
i.e. $m=2$, we cover the case $N\geq5$, while the two-dimensional
case, from a completely different point of view, is studied in
\cite{CN,KLW,LWK}. The case $N=3$ is left open and $N=4$ only
partially answered by Theorem \ref{Prop_eigv_cap}.

\paragraph{Equivalent definitions of capacities.} As described in
Section \ref{Section_cap}, both $\capp_{m,\R^N}$ in \eqref{capp_RN}
and $\Capp$ in \eqref{Capp} are good definitions of capacity, and they
are also equivalent for $N>2m$, see Lemma
\ref{Lemma_Mazya_equivalence}. In the second order case, it is not
difficult to prove that the two coincide, while - up to our knowledge
- this is still unknown in the higher-order setting. A weak question
would be to ask whether the two capacities are asymptotic for families
of shrinking domains, e.g. for $K_\varepsilon=\varepsilon\cK$ as
considered in Sec. \ref{Sec_blowup}. It would be also interesting to
understand whether the equivalence remains true for the weighted
capacities $\capp_{m,\R^N}(\cdot,h)$ and the analogue $\Capp(\cdot,h)$
for some class of nonconstant functions $h$.
                 
\paragraph{Boundary conditions.} As mentioned in the introduction, it
would be interesting to investigate the complementary cases of
prescribing Navier BCs on the removed set and either Navier or
Dirichlet BCs on the external boundary $\dOmega$. Because of the lack
of an extension by zero in case of Navier BCs, which has consequences
on the mutual relations between the spaces $V^m(\Omega\setminus K)$, a
different argument would be needed. More in general, it would be
challenging to consider more general types of BCs, which yield a
different quadratic form associated to the polyharmonic operator,
which would involve also boundary integrals. An interesting case in
the biharmonic setting, related to the physical model of thin plates,
is for example given by Steklov BCs $u=\Delta u-d\kappa\partial_nu=0$,
$d\in\R$ and $\kappa$ being the signed curvature of the
boundary. 

\section*{Acknowledgements}

V. Felli is partially supported by the INdAM-GNAMPA 2022 project
``Questioni di esistenza e unicit\`a per problemi non locali con
potenziali''. The main part of this work was carried out while G. Romani was
supported by a postdoctoral fellowship at the University of Milano-Bicocca.

\end{document}